\documentclass[11pt,a4paper,reqno]{amsart}
\usepackage{graphicx,amsmath,amsfonts,amssymb,amsthm,tcolorbox,dsfont} 

\usepackage{xcolor}
\usepackage[colorlinks=true, linkcolor=blue, citecolor=red]{hyperref}
\hypersetup{
    colorlinks,
    linkcolor={blue!60!black},
    citecolor={red!60!black},
    urlcolor={blue!80!black}
}
\usepackage{cleveref}
\usepackage{tabularx}
\usepackage{float}
\usepackage{nicefrac}

\setlength{\textwidth}{\paperwidth}
\addtolength{\textwidth}{-2in}
\calclayout

\usepackage[
backend=biber,
style=alphabetic,
sorting=ynt,
maxbibnames=10,
maxalphanames=4
]{biblatex}

\addbibresource{bibliography.bib}

\usepackage[]{appendix}
\numberwithin{equation}{section}

\newtheorem{theorem}{Theorem}[section]
\newtheorem*{theorem*}{Theorem}
\newtheorem{lemma}{Lemma}[section]
\newtheorem{prop}[lemma]{Proposition}

\theoremstyle{definition}
\newtheorem{definition}[lemma]{Definition}

\theoremstyle{remark}
\newtheorem{remark}[lemma]{Remark}

\newcommand{\vertiii}[1]{{\left\vert\kern-0.25ex\left\vert\kern-0.25ex\left\vert #1 
    \right\vert\kern-0.25ex\right\vert\kern-0.25ex\right\vert}}

\newcommand{\bu}{\mathbf{u}}
\newcommand{\bv}{\mathbf{v}}
\newcommand{\bw}{\mathbf{w}}
\newcommand{\bF}{\mathbf{F}}
\newcommand{\br}{\mathbf{r}}
\renewcommand{\d}{\, \mathrm{d}}
\newcommand{\der}{\mathrm{d}}
\newcommand{\esssup}{\mathrm{ess \, sup}}
\newcommand{\essinf}{\mathrm{ess \, inf}}

\newcommand{\loc}{\mathrm{loc}}
\newcommand{\bM}{\mathbf{M}}
\newcommand{\bzeta}{\boldsymbol{\zeta}}
\renewcommand{\leq}{\leqslant}
\renewcommand{\geq}{\geqslant}

\newcommand*{\defeq}{\mathrel{\vcenter{\baselineskip0.5ex \lineskiplimit0pt
                     \hbox{\scriptsize.}\hbox{\scriptsize.}}}%
                     =}

\newcommand*{\eqdef}{=\mathrel{\vcenter{\baselineskip0.5ex \lineskiplimit0pt
                     \hbox{\scriptsize.}\hbox{\scriptsize.}}}%
                     }

\DeclareMathOperator\artanh{artanh}
\DeclareMathOperator\arcoth{arcoth}

\DeclareMathOperator\supp{supp}
\DeclareMathOperator\sgn{sgn}

\usepackage{xcolor}
\definecolor{englishgreen}{rgb}{0.0, 0.5, 0.0}

\title[Carrollian Fluids in One Dimension]{One-Dimensional Carrollian Fluids III: Global Existence and Weak Continuity in $L^\infty$}

\author[P.~M.~Petropoulos]{Marios Petropoulos}
\address[P.~M.~Petropoulos]{Centre de Physique Th\'eorique, Ecole Polytechnique, 91120 Palaiseau, France}
\email{marios.petropoulos@polytechnique.edu}

\author[S.~M.~Schulz]{Simon Schulz}
\address[S.~M.~Schulz]{Scuola Normale Superiore, CRM De Giorgi, P.zza Cavalieri, 3,  56126 Pisa, Italy}\email{simon.schulz@sns.it}

\author[G.~Taujanskas]{Grigalius Taujanskas}
\address[G.~Taujanskas]{Faculty of Mathematics, Wilberforce Road,
Cambridge CB3 0WA,
UK}
\email{taujanskas@dpmms.cam.ac.uk}

\keywords{Carrollian physics, compensated compactness, vanishing viscosity, global existence}

\subjclass[2020]{35L65, 35Q35, 35Q75, 85A30} 

\thanks{\emph{Centre de Physique Th\'eorique Preprint Number.} CPHT-RR028.052024.}

\begin{document}

\begin{abstract}

The Carrollian fluid equations arise as the $c \to 0$ limit of the relativistic fluid equations and have recently experienced a surge of activity in the flat-space holography community. However, the rigorous mathematical well-posedness theory for these equations does not appear to have been previously studied. This paper is the third in a series in which we initiate the systematic analysis of the Carrollian fluid equations. In the present work we prove the global-in-time existence of bounded entropy solutions to the isentropic Carrollian fluid equations in one spatial dimension for a particular constitutive law ($\gamma = 3$). Our method is to use a vanishing viscosity approximation for which we establish a compensated compactness framework. Using this framework we also prove the compactness of entropy solutions in $L^\infty$, and establish a kinetic formulation of the problem. This global existence result in $L^\infty$ extends the $C^1$ theory presented in \cite{AthanasiouPetropoulosSchulzTaujanskas24a}.

\end{abstract}

\maketitle
\thispagestyle{empty}


\setcounter{tocdepth}{1}
\tableofcontents

\section{Introduction}

This paper is the third in the series \cite{AthanasiouPetropoulosSchulzTaujanskas24a,AthanasiouPetropoulosSchulzTaujanskas24b} in which we initiate the rigorous mathematical analysis of the flat Carrollian fluid equations. This work is concerned with the global-in-time existence theory for the $2 \times 2$ system 
\begin{equation}\label{eq:carroll_eqns_i}
    \left\lbrace\begin{aligned}
        &\partial_t (\beta \sigma) + \partial_x \sigma = 0, \\ 
        & \partial_t \left(\gamma^{-1} \sigma^\gamma + \beta^2 \sigma\right) + \partial_x (\beta \sigma) = 0,
    \end{aligned}\right.
\end{equation}
which we call the \emph{isentropic Carrollian fluid equations}. Here $\gamma$ is a given positive constant, and the meaning and origin of the unknown quantities $\sigma$ and $\beta$ are explained below, as are the concepts of flat and isentropic. Our main result is the following (see \S\ref{sec:main_results} for detailed statements).

\begin{theorem*}
    Let $\gamma = 3$. For admissible initial data $(\sigma_0, \beta_0) \in L^\infty(\mathbb{R})$ there exists a global entropy solution $(\sigma, \beta) \in L^\infty(\mathbb{R}^2_+)$ to the system \eqref{eq:carroll_eqns_i}. Moreover, entropy solutions to \eqref{eq:carroll_eqns_i} are weakly compact in $L^\infty(\mathbb{R}^2_+)$, and the system \eqref{eq:carroll_eqns_i} admits a kinetic formulation.
\end{theorem*}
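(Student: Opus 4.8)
The plan is to implement the vanishing viscosity--compensated compactness programme (Tartar, DiPerna, Lions--Perthame--Souganidis) adapted to the Carrollian system. Writing $U^\varepsilon=(\sigma^\varepsilon,\beta^\varepsilon)$, I would first regularise \eqref{eq:carroll_eqns_i} by an artificial viscosity $\varepsilon\partial_{xx}$ and solve the resulting uniformly parabolic $2\times 2$ system globally in time for smooth $U^\varepsilon$; the hyperbolic structure obtained in \cite{AthanasiouPetropoulosSchulzTaujanskas24a} (characteristic speeds, Riemann invariants, genuine nonlinearity) provides the geometry needed. The decisive a priori estimate is a uniform $L^\infty$ bound coming from a bounded invariant region in the plane of Riemann invariants $(w,z)$: for $\gamma=3$ the characteristic speeds are affine in $(w,z)$, so rectangles $\{w\le W_0,\ z\ge Z_0\}$ are invariant under the parabolic flow, and hence $\|U^\varepsilon\|_{L^\infty(\mathbb{R}^2_+)}\le C(\sigma_0,\beta_0)$ uniformly in $\varepsilon$. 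Testing the viscous system against a strictly convex entropy of \eqref{eq:carroll_eqns_i} then yields the dissipation bound $\varepsilon\iint_{\mathbb{R}^2_+}|\partial_x U^\varepsilon|^2\,\der x\,\der t\le C$.

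Next I would prove that for every entropy--entropy flux pair $(\eta,q)$ of \eqref{eq:carroll_eqns_i} the sequence $\partial_t\eta(U^\varepsilon)+\partial_x q(U^\varepsilon)$ is precompact in $H^{-1}_{\loc}(\mathbb{R}^2_+)$. By the viscous equation this quantity equals $\varepsilon\partial_{xx}\eta(U^\varepsilon)-\varepsilon\,D^2\eta(U^\varepsilon)(\partial_x U^\varepsilon,\partial_x U^\varepsilon)$; the first term tends to $0$ in $H^{-1}$ (since $\sqrt\varepsilon\,\partial_x U^\varepsilon$ is bounded in $L^2$), while the second is bounded in $L^1$, hence in $W^{-1,p}_{\loc}$ for $1\le p<2$. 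Together with the $W^{-1,\infty}$ bound furnished by the $L^\infty$ estimate, Murat's lemma upgrades this to precompactness in $H^{-1}_{\loc}$. Passing to a subsequence I obtain the Young measure $\nu_{t,x}$ generated by $U^\varepsilon$, and inserting two entropy pairs into the div--curl lemma produces the Tartar commutation identity
\[
\big\langle\nu_{t,x},\,\eta_1 q_2-\eta_2 q_1\big\rangle
=\big\langle\nu_{t,x},\eta_1\big\rangle\big\langle\nu_{t,x},q_2\big\rangle
-\big\langle\nu_{t,x},\eta_2\big\rangle\big\langle\nu_{t,x},q_1\big\rangle .
\]

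The heart of the proof --- and the step I expect to be the main obstacle --- is the reduction of $\nu_{t,x}$ to a single Dirac mass for a.e.\ $(t,x)$, which is exactly what the choice $\gamma=3$ is designed to make accessible. The weak entropies of \eqref{eq:carroll_eqns_i} are generated by an entropy kernel solving an Euler--Poisson--Darboux-type equation that, precisely at $\gamma=3$, degenerates to a constant-coefficient equation whose fundamental solution is an explicit, compactly supported function --- in fact the associated equilibrium density becomes an indicator function of an interval, exactly as in the scalar kinetic theory of Lions--Perthame--Tadmor. Feeding the resulting one-parameter family $\eta(\cdot\,;\xi),\,q(\cdot\,;\xi)$ into the commutation relation and exploiting the cancellations in the spirit of DiPerna's treatment of $\gamma=3$ (and the high-frequency asymptotics of the entropy kernel used by Lions--Perthame--Souganidis), I would show that $\supp\nu_{t,x}$ is a point; consequently $U^\varepsilon\to U$ a.e., and by the $H^{-1}_{\loc}$ compactness together with the viscous entropy inequalities the limit $U$ is a bounded entropy solution, which proves existence. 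The remaining two assertions use the same machinery: for any bounded sequence of entropy solutions the entropy productions are nonpositive locally finite measures, so Murat's lemma again gives $H^{-1}_{\loc}$-compactness of all entropy pairs and the Young-measure reduction yields a subsequence converging in $L^p_{\loc}$ (hence weakly-$\ast$ in $L^\infty$) to an entropy solution, giving the asserted compactness; and, finally, assembling the explicit family of kinetic entropies $\chi(\sigma,\beta;\xi)$ and keeping track of the sign and regularity of the associated entropy-defect measures yields a kinetic formulation of the form $\partial_t\chi+\partial_x(a\,\chi)=\partial_{\xi\xi}\mu$, with $\mu\ge0$ and an explicit drift $a=a(\sigma,\beta,\xi)$ affine in the kinetic variable $\xi$, in the manner of Lions--Perthame--Tadmor and Lions--Perthame--Souganidis.
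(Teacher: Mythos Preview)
Your overall strategy---vanishing viscosity, invariant regions in Riemann coordinates, dissipation estimate from a strictly convex entropy, Murat's lemma for $H^{-1}_{\loc}$ compactness, div--curl and Tartar commutation, Young measure reduction---is precisely the paper's route. Two points, however, deserve correction: one is a genuine gap, the other a structural misconception about the kinetic formulation.

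The gap concerns the viscous approximation. You propose to ``solve the resulting uniformly parabolic $2\times 2$ system globally in time'', but the Jacobian $\nabla_{\bu}\bF(\bu)=M^{-1}$ is singular along the lines $\beta=\pm\sigma$: the eigenvalues are $\lambda_1=(\beta-\sigma)^{-1}$ and $\lambda_2=(\beta+\sigma)^{-1}$, not affine in the Riemann invariants as you assert. The viscous system is therefore \emph{not} uniformly parabolic and standard existence theory does not apply directly. The paper circumvents this by first solving a \emph{modified} regularised problem in which $\nabla_{\bu}\bF$ is replaced by a bounded $C^1$ cut-off matrix $\bM(\bu)$ (existence via Schaefer's fixed point theorem), and then observing that in Riemann invariant coordinates $w_1=\sigma+\beta$, $w_2=\sigma-\beta$ the modified system decouples into two autonomous scalar parabolic equations; the maximum principle then propagates the lower bounds $w_j\geq c_0>0$, so the cut-off is never active and the modified solution solves the original viscous problem. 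Your one-sided rectangle $\{w\le W_0,\ z\ge Z_0\}$ misses the point: the crucial constraint is $w_1,w_2\geq c_0$, which keeps the solution away from the singular lines and is exactly the ``admissibility'' hypothesis on the data.

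The misconception concerns the kinetic formulation. The entropy equation is the linear wave equation $\eta_{\sigma\sigma}-\eta_{\beta\beta}=0$, and d'Alembert's formula produces \emph{two} independent kernels: $\chi_1$ a sum of Dirac masses and $\chi_2=\mathds{1}_{[\beta-\sigma,\beta+\sigma]}$. But the corresponding flux kernels are \emph{not} of transport form $a(\xi)\chi$ with affine drift: one has $\varsigma_2(\sigma,\beta,s)=\chi_2(\sigma,\beta,s)/s$, so the ``drift'' is $1/s$, and $\varsigma_1$ is more intricate still, involving $\sgn(s-\beta)\,s^{-1}\,\der/\der s$ acting on the test function. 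Consequently the paper obtains two kinetic equations rather than one: the first with right-hand side $-\partial_s^2\mu_1$ and $\mu_1\geq 0$, the second with right-hand side $-\partial_s\mu_2$ and $\mu_2$ merely a \emph{signed} bounded Radon measure; moreover each must be posed in duality with a restricted class of test functions ($f'(0)=0$, respectively $g(0)=0$) to render the flux kernels well-defined near $s=0$. Your anticipated Lions--Perthame--Tadmor form $\partial_t\chi+\partial_x(a\chi)=\partial_{\xi\xi}\mu$ with $\mu\geq 0$ and $a$ affine in $\xi$ does not hold for this system.
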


The Carrollian fluid equations are the speed of light $c \to 0$ limit of the relativistic fluid equations $\nabla_a T^{ab} = 0$, where $T_{ab}$ is the standard energy-momentum tensor for relativistic fluids \cite{CiambelliMarteauPetkouPetropoulosSiampos2018,PetkouPetropoulosRiveraBetancourSiampos2022,AthanasiouPetropoulosSchulzTaujanskas24a,Hartong2015,deBoerHartongObersSybesmaVandoren2018}. The $c \to 0$ \.In\"on\"u--Wigner contraction of the Poincar\'e group was first discovered by L\'evy-Leblond \cite{LevyLeblond1965} in 1965, who suggested the name \emph{Carroll group}, and shortly after independently by Sen Gupta \cite{SenGupta1966}. The motivations of Sen Gupta and L\'evy-Leblond appear to have been mostly pedagogical, but a decade and a half later their ``degenerate cousin of the Galilei group" turned out  to have some relevance in physics  
\cite{Hennaux1979}. It has recently resurfaced in the context of attempting to understand flat-space holography along with asymptotic symmetries in general relativity. In $d+1$ dimensions the Carrollian limit gives rise to a degenerate $(d+1)$-dimensional metric, of the kind which naturally occurs on the null boundary of an asymptotically flat $(d+2)$-dimensional spacetime (see \textit{e.g.} \cite{CampoleoniDelfantePekarPetropoulosRiveraBetancourVilatte2023} and references therein). As a consequence, studies of the associated \emph{Carrollian geometry} have recently seen a dramatic surge of activity  \cite{DuvalGibbonsHorvathyZhang2014,DuvalGibbonsHorvathy2014,DuvalGibbonsHorvathy2014b,BekaertMorand2016,BekaertMorand2018,Morand2020,CiambelliLeighMarteauPetropoulos2019,Herfray2022}. At the level of the equations of motion, the Carrollian limit of relativistic fluids formally yields a set of $2d + 1$ conservation laws for the total energy density, an energy flux, a symmetric stress-energy tensor, a momentum density, and a symmetric super-stress-energy tensor. Through the correspondence between relativistic fluids on the boundary of an anti-de Sitter spacetime and gravity in the bulk, these observables then formally correspond to the holographic duals of the gravitational field in the limit $\Lambda \to 0$, where $\Lambda$ is the cosmological constant. 

Notwithstanding their geometrical and physical significance, the rigorous mathematical underpinnings of the Carrollian fluid system are yet to be established, even for $d=1$. In this case the equations may be written as a $2 \times 2$ system of conservation laws: see \S2 of the companion paper \cite{AthanasiouPetropoulosSchulzTaujanskas24b}. Systems of hyperbolic conservation laws are an old and classical subject in the analysis of PDEs \cite{Dafermos2016,Serre1,Serre2}. Much of the theory of hyperbolic systems has focused on treating the dual limit of the relativistic fluid equations, \textit{i.e.}~$c \to \infty$, which yields the classical Galilean fluid equations: the compressible Euler equations. The existence theory for the Euler system in the case of constant thermodynamic entropy\footnote{One should not confuse the thermodynamic entropy, which will be constant in the systems at hand, and the PDE entropy introduced later as an index of monotonous evolution.} and polytropic fluids (as well as for more general constitutive relations) is now well-understood; see \textit{e.g.}~\cite{Chen86,DingChenLuo85,LionsPerthameTadmor94,LionsPerthameSouganidis96,ChenLeFloch,Matthew} and references therein. An effective classical method to construct the suitable notion of so-called \emph{entropy solutions} for the Galilean equations proceeds by the vanishing viscosity approximation, for which convergence can be shown using the compensated compactness framework of Tartar and Murat \cite{Tartar78,Tartar79}; the first application of this method to a system of conservation laws goes back to DiPerna \cite{DiPerna83a,DiPerna83b} in the 1980s. For Galilean fluids, on the one hand, proceeding by means of the vanishing viscosity method is physically reasonable, as one formally expects to recover the Euler equations from their viscous analogues, the compressible Navier--Stokes equations. For Carrollian fluids, on the other hand, physical intuition suggests that the $c \to 0$ limit should in some sense prohibit individual motion altogether, though without excluding collective time-dependent phenomena \cite{BergshoeffGomisLonghi2014}. In turn, it is not even a priori clear what the type of the Carrollian fluid system (in the sense of the usual classification of PDEs) is, or how a well-posedness theory should be constructed.

We focus in this paper on the case of Carrollian fluids on a flat background in dimension $d=1$ and study their global dynamics. In this case it is known that the $2d+1 = 3$ Carrollian equations are related to the three Galilean Euler equations by a duality map, and their symmetry groups are isomorphic \cite{AthanasiouPetropoulosSchulzTaujanskas24a,AthanasiouPetropoulosSchulzTaujanskas24b,DuvalGibbonsHorvathyZhang2014,Bagchi2010}. This duality map interchanges time and space and maps the Galilean fluid density $\rho$ to the \emph{Carrollian stress} $\sigma$, with dimensions of $\text{energy}\times\text{length}\times\text{time}^{-2}$, and the Galilean velocity $v$ to the \emph{Carrollian velocity} $\beta$, with dimensions of time over length, \textit{i.e.} inverse velocity. More generally, this duality interchanges equilibrium and out-of-equilibrium observables, and in our case a perfect Galilean fluid with a non-perfect Carrollian relative. The existence of this duality suggests that the Carrollian equations are indeed hyperbolic in some region in phase space, but the fact that solutions are understood on the Galilean side in fact does not provide information about the solutions on the Carrollian side. Indeed, the duality map is nonlinear, and a priori does not preserve regularity (\textit{cf.}~\cite{AthanasiouPetropoulosSchulzTaujanskas24b}). Furthermore, under the interchange of time and space the Galilean Cauchy problem is mapped to a Carrollian problem with \emph{boundary} data. Instead, here we wish to consider the Carrollian Cauchy problem.

As already mentioned, in the one-dimensional Carrollian fluids under consideration, a central dynamical variable besides the velocity $\beta$ is the Carrollian stress $\sigma$. The latter is in fact the super-stress-energy tensor \emph{i.e.} the order-$c^{-2}$ term of the parent relativistic stress. The term of order $c^{0}$ is the ordinary viscous stress, which combined with the pressure $p$ provides the so-called \emph{generalized Carrollian pressure} $\varpi$. In the same fashion, the relativistic heat current has an order-$c^{0}$ term, which we assume to vanish (this is a consequence of the Carroll--Galilei duality when the Galilean dynamics exhibits a genuine matter conservation, see \cite{AthanasiouPetropoulosSchulzTaujanskas24a} for details); and an order-$c^{2}$ term $\pi$, called here \emph{Carrollian heat current}.
The product of the Carrollian velocity $\beta$ and the Carrollian stress $\sigma$ gives the Carrollian energy flux, dual to the Galilean matter current $\rho v$. The generalized 
 Carrollian pressure $\varpi$ has units\footnote{The pressure has units of energy density, which in $d=1$ is $\text{energy}\times\text{length}^{-1}$. More accurately, $p$ and $\varpi$ should be called tensions.} of $\text{energy}\times\text{length}^{-1}$, and is mapped to the total energy density $\rho (e + \frac{1}{2} v^2)$.
 
 The remaining Carrollian observables are therefore the total energy density, which is given by $\epsilon + \beta^2 \sigma$, and the momentum density, given by $\beta( \epsilon + \varpi) + \pi$. Here $\epsilon$ is the internal energy density, dual to the Galilean pressure $p$. In full, the system reads (see \cite[Eqs.~(94)--(96)]{AthanasiouPetropoulosSchulzTaujanskas24b})
\begin{equation}\label{eq:carroll_eqns_0}
    \left\lbrace\begin{aligned}
        &\partial_t (\beta \sigma) + \partial_x \sigma = 0, \\ 
        & \partial_t (\epsilon + \beta^2 \sigma) + \partial_x (\beta \sigma) = 0, \\
        & \partial_t (\beta \varpi) + \partial_x \varpi = - \partial_t (\beta \epsilon  + \pi), 
    \end{aligned}\right.
\end{equation}
where $(t,x) \in (0,\infty) \times \mathbb{R} \eqdef \mathbb{R}^2_+$. The first equation in \eqref{eq:carroll_eqns_0} is the so-called Carrollian space-extra equation, and is mapped to the Galilean continuity equation under duality; the second equation in \eqref{eq:carroll_eqns_0} is the Carrollian time-energy equation, and is mapped to the perfect-fluid Galilean space-Euler equation under duality. The third equation is the Carrollian space-momentum equation the Galilean dual of which is the ordinary time-energy equation for ideal fluids.\footnote{Ideal (or equivalently perfect) fluids have neither viscous stress nor heat current. Details on the Carroll--Galilei duality map are available in \cite{AthanasiouPetropoulosSchulzTaujanskas24a,AthanasiouPetropoulosSchulzTaujanskas24b}.} Note that the first two equations decouple from the third, \textit{i.e.}~they do not involve $\varpi$.

We seek a solution $(\sigma, \beta, \varpi)$ in a functional setting to be prescribed, and a priori $\beta$, $\sigma$ and $\varpi$ may assume either sign. Working with dimensionless variables, we set 
\begin{equation} \label{constitutive_relation} \epsilon = \gamma^{-1} \sigma^\gamma. \end{equation}
The constitutive relation \eqref{constitutive_relation} is borrowed directly from the dual Galilean constitutive relation 
$p(\rho) = \gamma^{-1} \rho^\gamma$ for ideal polytropic gases; in particular, $\gamma$ is invariant under the duality map. 
It is a well-known fact in the case of ideal Galilean fluids that the choice $p = \gamma^{-1} \rho^\gamma$, together with the energy equipartition law $e\rho =\frac{p}{\gamma-1}=\frac{\rho^\gamma}{\gamma(\gamma-1)}$ reduces the Galilean time-energy equation, at least in the case of classical solutions, to the simple statement that the entropy is constant in time.\footnote{The dimensionful Galilean constitutive relation reads $p  = K \text{e}^{\nicefrac{s}{c_v}} \rho^\gamma$, where $K$ is a positive constant (involving $\gamma $, the mass of the elementary carriers of the fluid and Planck's constant), $c_v$ is the specific thermal capacity of the gas, and $s $ is its specific entropy. This is how the latter enters the fluid equations.} In this case the Galilean dual of the system \eqref{eq:carroll_eqns_0} therefore reduces to a $2 \times 2$ system of equations known as the isentropic Euler equations. Analogously, using the duality map presented in \cite{AthanasiouPetropoulosSchulzTaujanskas24a}, direct computation shows that if $\sigma$ and $\beta$ are assumed to be classical solutions of the first two equations in \eqref{eq:carroll_eqns_0} with the constitutive relation \eqref{constitutive_relation} 
supplemented by the Carrollian dual of the total energy $\rho\left(e+\frac{1}{2}v^2\right)$ equipartition law,
\begin{equation*}
    \varpi = \frac{1}{2}\sigma\beta^2 + \frac{1}{\gamma(\gamma-1)}\sigma^\gamma, 
\end{equation*}
the triple $(\sigma,\beta,\varpi)$ satisfies the full system \eqref{eq:carroll_eqns_0} with $\pi=0$. In turn, the third equation in \eqref{eq:carroll_eqns_0} is, in some sense, trivialized. We will call this system the \emph{isentropic Carrollian equations}\footnote{The non-isentropic Carrollian equations, \textit{i.e.}~the full system \eqref{eq:carroll_eqns_0}, can of course be studied in their own right. Indeed, if $(\sigma, \beta)$ are known, then the third equation in \eqref{eq:carroll_eqns_0} becomes a scalar conservation law for $\varpi$ with known coefficients. However, our solutions $(\sigma, \beta)$ will be merely $L^\infty$, which is too rough to apply known existence results \cite{BouchutJames1998,Ambrosio} in the large.}---by Galilean analogy since there is no generally accepted definition of Carrollian entropy.
For more details concerning this reduction and the duality between the Carrollian and Galilean systems, we refer the reader to the companion paper \cite[\S 2]{AthanasiouPetropoulosSchulzTaujanskas24b}.

The main target of our analysis is therefore the $2 \times 2$ system of isentropic Carrollian equations \eqref{eq:carroll_eqns_i} for $\sigma$ and $\beta$. In this paper we consider only the case\footnote{In the case $\gamma \neq 3$ it is not clear if one can apply the techniques of compensated compactness; this will be the subject of future work.}
\[ \gamma = 3. \]
For this---and only this---specific value of $\gamma$ the isentropic equations \eqref{eq:carroll_eqns_i} can be recast in the conservative form
\begin{equation}\label{eq:conservative form intro}
    \bu_t + \bF(\bu)_x = 0 
\end{equation}
for the vector of unknowns $\bu = (\sigma,\beta)$ and a suitable flux $\bF$ (see \S\ref{sec:conservative form}). This is the setting in which one may apply compensated compactness, which we use in conjuction with a vanishing viscosity\footnote{Here and throughout the paper we use the term \emph{viscosity} in a strictly PDE sense, to refer to the small parameter $\varepsilon$ multiplying $\bu_{xx}$ not to be confused with the energy density $\epsilon$. Borrowing again from the Galilean terminology, the physical Carrollian viscosity would be instead the coefficient of the first order term in the $\beta$-derivative expansion of the Carrollian stress $\sigma$.} method to construct the pair $(\sigma, \beta)$.

Our main result (\Cref{thm:main_i}) is that for admissible bounded initial data the one-dimensional isentropic Carrollian fluid equations \eqref{eq:carroll_eqns_i} admit global entropy solutions; furthermore the system of equations is weakly continuous with respect to this notion of solution (\Cref{thm:main_ii}). Note that the low regularity is necessary for global existence; in the companion paper\footnote{In \cite{AthanasiouPetropoulosSchulzTaujanskas24b} we in fact study the $C^1$ setting, \emph{without} relying on the conservative form \eqref{eq:conservative form intro} of the equations, for the wider range of exponents $\gamma \in (1, 3]$.} \cite{AthanasiouPetropoulosSchulzTaujanskas24b} we show that solutions which are initially $C^1$ can develop singularities in finite time. We also prove that the problem admits a kinetic formulation (Theorem \ref{thm:main_iii}). We note in passing that, although the entropies of the system have a very simple structure, this is not the case for the entropy-fluxes (see Definition \ref{def:entropy kernels} and \S \ref{sec:entropy pairs}); this complicates the kinetic formulation of the problem, requiring it to be defined in duality with a more intricate set of admissible test functions than is typical.

The proofs of the main results are carried out by obtaining an existence theory and uniform estimates for the vanishing viscosity approximation of the system \eqref{eq:conservative form intro}, and the method of compensated compactness. A number of features distinguish the system \eqref{eq:conservative form intro} from its Galilean counterpart which lead to novel behaviour. Reflective of the duality, the matrix $\nabla_{\bu} \bF(\bu)$ turns out to be the exact inverse of a simple symmetric matrix $M \in \mathbb{R}^{2\times 2}$, and moreover the flux $\bF(\bu)$ manifestly degenerates whenever $|\sigma| = |\beta|$ (see \eqref{eq:flux when gamma is 3}). The eigenvalues of $\nabla_\bu \bF(\bu)$, being reciprocals of the eigenvalues of $M$, never vanish, but may coincide or blow up. The existence of a full set of distinct, well-defined real eigenvalues is precisely the condition for strict hyperbolicity, which can therefore degenerate in one of two ways. The first is the vanishing of Carrollian stress, \emph{i.e.}~the existence of points in phase space where $\sigma = 0$, when the eigenvalues of $M^{-1}$ coincide; this is the dual notion of Galilean cavitation $\rho = 0$, which we propose to call {\emph{Carrollian liquescence}\footnote{In the context of Galilean fluids, the dual notion is $\rho=0$ and is called the vacuum or cavitation. For a Carrollian fluid, $\sigma=0$, $\sigma$ representing stress, suggests no resistance to flow, \emph{i.e.}~the region where the fluid becomes inviscid. We therefore propose to call this phenomenon \emph{Carrollian liquescence}.}. The second, the blow-up of the eigenvalues, occurs when $\beta = \pm \sigma$. In this case the loss of strict hyperbolicity occurs on entire \emph{lines} in phase space, in sharp contrast to the Galilean Euler equations; to our knowledge this phenomenon has no counterpart on the Galilean side since the vanishing of the eigenvalues presents no issue. In fact, $\sigma = 0$ only results in the loss of the \emph{strictness} of hyperbolicity, but $\beta = \pm \sigma$ is a much more severe degeneracy when the eigenvalues cease to even be well-defined and the system loses rank. We therefore define the initial data to be \emph{admissible} if it stays away from both of these degeneracies, and prove that admissibility is propagated. The singularities of the Jacobian $\nabla_\bu \bF(\bu)$ along the lines $\beta = \pm \sigma$ prevent us from directly applying standard results for parabolic systems to deduce the existence of the viscous approximations. This motivates the need for a detailed analysis of the viscous approximate problems, contained in \S \ref{sec:unif est}, where we prove the aforementioned propagation of admissibility using an invariant region argument in the Riemann invariant coordinates; this section contains the bulk of the technical aspects of the paper. The present system \eqref{eq:carroll_eqns_i} therefore exhibits a wealth of new difficulties that are not encountered in the study of its Galilean counterpart, both concerning its hyperbolicity and entropic structure.

The rest of the paper is structured as follows. In \S \ref{sec:prelim} we introduce our notation and rewrite the equations \eqref{eq:carroll_eqns_i} as a first-order conservative system for the unknown vector field $(\sigma,\beta)$, which subsequently allows us to define our concept of entropy solution. In \S \ref{sec:main_results} we provide the statement of our main results. Section \S \ref{sec:hyperbolicity} is concerned with establishing the hyperbolicity and genuine nonlinearity of the system in the relevant regions of phase space; we also introduce the Riemann invariant coordinates in this section. In \S \ref{sec:ent struct and cc}, we compute two linearly independent families of entropy/entropy-flux pairs and establish our compensated compactness framework. In \S \ref{sec:unif est} we prove the existence and uniform boundedness of the viscous approximate solutions, and show that these solutions are admissible in the compensated compactness framework of the previous section. Finally, in \S \ref{sec:proofs of global exis and weak conti}, we use the compensated compactness framework of \S \ref{sec:ent struct and cc} and the construction in \S \ref{sec:unif est} to prove our main theorems.

\section{Preliminaries}\label{sec:prelim}

\subsection{Conventions and Notation}

In the paper we use the notation $\mathbb{R}^2_+ \defeq (0,\infty) \times \mathbb{R}$. The letter $\epsilon$ is used only to denote the internal energy density (dual to the Galilean pressure) in \eqref{eq:carroll_eqns_0}; the letter $\varepsilon$ is used throughout the paper to denote the vanishing viscosity parameter (\textit{cf.}~\S \ref{sec:comp comp framework} and \S \ref{sec:unif est}). We denote by $C^0(X)$ the space of continuous real-valued functions on $X \subset \mathbb{R}^m$, and by $C^k(X)$ the usual spaces of $k$ times continuously differentiable functions on $X$. We denote by $C^\infty(X)$ the space of smooth functions, and use the subscript ${}_c$ to denote spaces of functions with compact support. We write $L^p(X)$ for $p \in [1,\infty]$ to denote the usual Lebesgue spaces on $X$, and $H^s(X)$ and $W^{s,p}(X)$ for Sobolev spaces, as standard. We also employ the following spaces of test functions 
\begin{align}
\mathcal{A} \defeq \big\{ f \in C^2(\mathbb{R}): \, f'(0) = 0 \big\} \quad \text{and} \quad \mathcal{B} \defeq \big\{ g \in C^1(\mathbb{R}): \, g(0) = 0 \big\}. \label{eq:admissible test functions} 
    \end{align}
    The topological dual of any function space $\mathcal{X}$ will be denoted by $\mathcal{X}'$. We denote the vector field of unknowns by $\bu = (\sigma,\beta)$, and we will make use of the following functions of the phase space variables 
\begin{equation*}
    w_1(\sigma,\beta) = \sigma + \beta, \qquad w_2(\sigma,\beta) = \sigma - \beta; 
\end{equation*}
we will show in \S \ref{sec:riemann invariants} that these are Riemann invariants of the system. In \S \ref{sec:comp comp framework}, given a measure $\nu_{t,x}$ on $\mathbb{R}^2$ indexed by $(t,x) \in \mathbb{R}^2_+$, we will employ the notation 
\begin{equation*}
    \overline{h} = \int_{\mathbb{R}^2} h \d \nu_{t,x} ,
\end{equation*}
and thereby drop the subscripts $t$ and $x$, where no confusion arises. A distributional solution will always refer to a solution in duality with $C^\infty_c(\mathbb{R}^2_+)$. We say that a function is a classical solution if it satisfies the equation as a pointwise equality between continuous functions.

\subsection{Conservative Form of the Equations}\label{sec:conservative form}

In this subsection we rewrite the system \eqref{eq:carroll_eqns_i} in the form \eqref{eq:conservative form intro}. A formal application of the chain rule shows that the system \eqref{eq:carroll_eqns_i} reads 
\begin{equation*}
    \left( \begin{matrix}
        \beta & \sigma \\ 
        \sigma^{\gamma-1} + \beta^2 & 2\sigma \beta
    \end{matrix} \right) \partial_t \left( \begin{matrix}
        \sigma \\ 
        \beta
    \end{matrix} \right) + \left( \begin{matrix}
        1 & 0 \\ 
        \beta & \sigma
    \end{matrix} \right) \partial_x \left( \begin{matrix}
        \sigma \\ 
        \beta
    \end{matrix} \right) = 0, 
\end{equation*}
\textit{i.e.} by letting $\bu = (\sigma,\beta)$, 
\begin{equation*}
    \bu_t + \frac{1}{\beta^2 - \sigma^{\gamma-1}}\left( \begin{matrix}
        \beta & -\sigma \\ 
        -\sigma^{\gamma-2}  & \beta
    \end{matrix} \right)  \bu_x = 0, 
\end{equation*}
or equivalently, 
\begin{equation*}
    \bu_t + M^{-1}  \bu_x = 0, 
\end{equation*}
where 
\begin{equation*}
    M = \left(\begin{matrix} \beta & \sigma \\ \sigma^{\gamma-2} & \beta \end{matrix} \right). 
\end{equation*}
In order to implement the compensated compactness method, we rewrite the equations in conservative form, \textit{i.e.} we look for $\bF$ such that 
\begin{equation}\label{eq:what it needs to be}
    \nabla_\bu \bF(\bu) = M^{-1} = \frac{1}{\beta^2 - \sigma^{\gamma-1}}\left( \begin{matrix}
        \beta & -\sigma \\ 
        -\sigma^{\gamma-2}  & \beta
    \end{matrix} \right), 
\end{equation}
whence the system of equations may be rewritten as
\begin{equation}\label{eq:consvn law carroll gamma 3}
    \bu_t + \bF(\bu)_x = 0. 
\end{equation}
It is straightforward to check the equality of the mixed partial derivatives $\mathbf{F}_{\sigma\beta} = \mathbf{F}_{\beta\sigma}$ if and only if $\gamma = 3$, so that the system \eqref{eq:carroll_eqns_i} is in conservative form only for this exponent, as claimed in the introduction. In this case one has 
\begin{equation}\label{eq:flux when gamma is 3}
    \bF(\bu) = \left( \begin{matrix}
        \phi(\sigma/\beta) \\ 
        \frac{1}{2}\log | \beta^2 - \sigma^2 | 
    \end{matrix} \right), 
\end{equation}
with 
\begin{equation*}
    \phi(s) = \left\lbrace \begin{aligned}
        &\artanh s \quad && \text{for } |s| < 1, \\ 
        &\arcoth s \quad && \text{for } |s| > 1. 
    \end{aligned} \right. 
\end{equation*}
Indeed, direct computation yields 
\begin{equation*}
    \bF(\bu)_x = \left( \begin{matrix}
        \frac{\beta \sigma_x}{\beta^2-\sigma^2} - \frac{\sigma \beta_x}{\beta^2-\sigma^2} \\ 
     \frac{\beta \beta_x}{\beta^2-\sigma^2} - \frac{\sigma \sigma_x}{\beta^2-\sigma^2}
    \end{matrix} \right) = \frac{1}{\beta^2-\sigma^2}\left( \begin{matrix}
        \beta & -\sigma \\ 
        -\sigma & \beta
    \end{matrix} \right)\left( \begin{matrix}
        \sigma_x \\ \beta_x
    \end{matrix} \right) = M^{-1}\bu_x, 
\end{equation*}
provided $\beta \neq \pm\sigma$, and the system may be rewritten in the conservative form \eqref{eq:consvn law carroll gamma 3} with $\bF$ as prescribed by \eqref{eq:flux when gamma is 3}. We note in passing that $\bF$ is continuous in the open set 
\[ \mathbb{R}^2 \setminus \{\beta = \pm \sigma\}.\]

\begin{remark}
We note that the system \eqref{eq:consvn law carroll gamma 3} is equivalent to the original equations \eqref{eq:carroll_eqns_i} only for classical solutions, for which applications of the chain rule are justified; this is \emph{a priori} not the case for distributional solutions, which incorporate shocks and other losses of regularity. The reformulation \eqref{eq:consvn law carroll gamma 3} is particularly useful because it is in conservative form for the unknown vector field $(\sigma,\beta)$. This allows us to employ the compensated compactness method introduced by DiPerna in \cite{DiPerna83a} and thereby establish existence in $L^\infty$ without requiring information on the derivatives. 
\end{remark}

\subsection{Entropy Solutions} 

In what follows we consider a stronger notion of solutions than distributional solutions, called entropy solutions. For the time being, we recall that a pair $(\eta,q)$ of $C^2$ functions of the phase-space variables forms an \emph{entropy pair} of the conservative system \eqref{eq:consvn law carroll gamma 3} if there holds 
\begin{equation*}
    \eta(\bu)_t + q(\bu)_x \leq 0 
\end{equation*}
in the sense of distributions; for $\bu$ smooth, the inequality becomes an equality. If $(\eta,q)$ is an entropy pair and $\eta$ is (strongly) convex, then we say that the pair $(\eta,q)$ is a \emph{(strongly) convex entropy pair}. For example, the pair of functions $(\eta^*,q^*)$ given by 
\begin{equation}\label{eq:special entropy}
    \eta^*(\sigma,\beta) = \frac{1}{2}(\sigma^2 + \beta^2), \quad q^*(\sigma,\beta) = \beta 
\end{equation}
is a $C^2$ strongly convex entropy pair for our system; \textit{cf.}~\S \ref{sec:entropy pairs}.

\begin{definition}[Entropy Solution]\label{def:ent sol}
    We say that $\bu=(\sigma,\beta) \in L^\infty(\mathbb{R}^2_+)$ with $\sigma \geq 0$ a.e.~is an \emph{entropy solution} of the system \eqref{eq:carroll_eqns_i} for $\gamma=3$ (\textit{cf.}~\eqref{eq:consvn law carroll gamma 3}) with initial data $\bu_0=(\sigma_0,\beta_0) \in L^\infty(\mathbb{R})$ if$:$ 
    \begin{enumerate}
        \item[(i)] $(\sigma,\beta)$ is a distributional solution of \eqref{eq:consvn law carroll gamma 3} with initial data $\bu_0$, \textit{i.e.}, for all $\varphi \in C^\infty_c([0,\infty)\times\mathbb{R})$, there holds 
        \begin{equation*}
            \int_{\mathbb{R}^2_+} \big( \bu \varphi_t + \bF(\bu) \varphi_x \big) \d x \d t = -\int_\mathbb{R} \bu_0(x) \varphi(0,x) \d x; 
        \end{equation*}
        \item[(ii)] for all $C^2$ convex entropy pairs $(\eta,q)$, there holds 
        \begin{equation}\label{eq:entropy inequality}
    \eta(\bu)_t + q(\bu)_x \leq 0 
\end{equation}
in the sense of distributions, \textit{i.e.} for all non-negative $\varphi \in C^\infty_c(\mathbb{R}^2_+)$ 
\begin{equation*}
    \int_{\mathbb{R}^2_+} \big( \eta(\bu) \varphi_t + q(\bu) \varphi_x  \big) \d x \d t \geq 0. 
\end{equation*}
    \end{enumerate}
\end{definition}

Furthermore, we will show in \S \ref{sec:ent struct and cc} that we may generate entropy pairs $(\eta,q)$ of \eqref{eq:consvn law carroll gamma 3} using two linearly independent sets of kernels, $(\chi_1,\varsigma_1)$ and $(\chi_2,\varsigma_2)$, which we now define. We shall also use these to prove the kinetic formulation of the problem. 
\begin{definition}[Entropy Kernels]\label{def:entropy kernels}
  We define the \emph{first entropy kernel and its entropy-flux kernel} $(\chi,\varsigma_1)$ to be 
  \begin{equation}\label{eq:kernel 1}
    \begin{aligned}
    &\chi_1(\sigma,\beta,s) \defeq \delta(\beta-\sigma-s) + \delta(\beta+\sigma-s), \\ 
    &\varsigma_1(\sigma,\beta,s) \defeq \mathds{1}_{[\beta-\sigma,\beta+\sigma]}(s) \sgn(s-\beta)\frac{1}{s} \frac{\der}{\der s} + 2 \mathds{1}_{[0,\beta]}(s) \frac{1}{s} \frac{\der}{\der s}, 
    \end{aligned}
\end{equation}
and the \emph{second entropy kernel and its entropy-flux kernel} $(\chi_2,\varsigma_2)$ to be 
  \begin{equation}\label{eq:kernel 2}
    \begin{aligned}
     &\chi_2(\sigma,\beta,s) \defeq \mathds{1}_{[\beta-\sigma,\beta+\sigma]}(s), \\ & \varsigma_2(\sigma,\beta,s) \defeq \mathds{1}_{[\beta-\sigma,\beta+\sigma]}(s)\frac{1}{s}. 
    \end{aligned}
\end{equation}
\end{definition}
For a suitable test function $\varphi$, we will generate entropy pairs $(\eta_j^\varphi,q_j^\varphi)$ ($j=1,2$) using the formulas 
\begin{equation}\label{eq:generate my entropies}
    \eta_j^\varphi(\sigma,\beta) = \langle \chi_j(\sigma,\beta,\cdot), \varphi \rangle, \qquad q_j^\varphi(\sigma,\beta) = \langle \varsigma_j(\sigma,\beta,\cdot) , \varphi \rangle; 
\end{equation}
we refer the reader to \S \ref{sec:entropy pairs} for further details.

\section{Main Results} \label{sec:main_results}

Our first main theorem is as follows, and is obtained as a consequence of the compensated compactness framework established in \S \ref{sec:comp comp framework} (\textit{cf.}~Proposition \ref{lem:comp comp}).

\begin{theorem}[Global Existence]\label{thm:main_i}
        Let $c_0>0$ and $(\sigma_0,\beta_0) \in L^2(\mathbb{R}) \cap L^\infty(\mathbb{R})$ be such that 
        \begin{equation}\label{eq:initial condition essinf}
           \essinf_\mathbb{R} \big( \sigma_0 - |\beta_0| \big) \geq c_0. 
        \end{equation}
        Then there exists $(\sigma,\beta) \in L^\infty(\mathbb{R}^2_+) \cap L^\infty_{\loc}(0,\infty;L^2(\mathbb{R}))$ an entropy solution of \eqref{eq:carroll_eqns_i} for $\gamma=3$ with initial data $(\sigma_0,\beta_0)$. Moreover, 
         \begin{equation*}
           \essinf_{\mathbb{R}^2_+} \big( \sigma - |\beta| \big) \geq c_0. 
        \end{equation*}
\end{theorem}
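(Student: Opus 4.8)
The plan is to construct $(\sigma,\beta)$ as an a.e.\ limit of the parabolic regularisations of \eqref{eq:consvn law carroll gamma 3}, and to identify the limit via the compensated compactness framework of \S\ref{sec:comp comp framework}. \textbf{Regularisation.} First I would mollify the data, $\bu_0^\varepsilon \defeq \bu_0 * \rho_\varepsilon \in C^\infty(\mathbb{R})$; by Jensen's inequality $|\beta_0^\varepsilon| \le |\beta_0|*\rho_\varepsilon$, so $\sigma_0^\varepsilon - |\beta_0^\varepsilon| \ge (\sigma_0-|\beta_0|)*\rho_\varepsilon \ge c_0$, while the $L^2$ and $L^\infty$ norms are not increased and $\bu_0^\varepsilon \to \bu_0$ in $L^2(\mathbb{R})$. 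One then solves the viscous system $\bu^\varepsilon_t + \bF(\bu^\varepsilon)_x = \varepsilon \bu^\varepsilon_{xx}$ with data $\bu_0^\varepsilon$; because $\nabla_\bu\bF$ is singular on the lines $\beta = \pm\sigma$, this global solvability, together with the uniform bounds below, is precisely the content of \S\ref{sec:unif est}.

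\textbf{Uniform estimates.} An invariant-region argument in the Riemann coordinates $w_1 = \sigma+\beta$, $w_2 = \sigma-\beta$ (\S\ref{sec:unif est}) shows that the region $\{\sigma - |\beta| \ge c_0,\ \sigma + |\beta| \le \|\sigma_0\|_{L^\infty} + \|\beta_0\|_{L^\infty}\}$ is preserved by the viscous flow; hence $\|\bu^\varepsilon\|_{L^\infty(\mathbb{R}^2_+)} \le C$ and $\sigma^\varepsilon - |\beta^\varepsilon| \ge c_0$ uniformly in $\varepsilon$, so the $\bu^\varepsilon$ take values in a fixed compact subset of $\mathbb{R}^2 \setminus \{\beta = \pm\sigma\}$ on which $\bF$ is continuous and bounded. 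Testing the viscous equation against $\nabla\eta^*(\bu^\varepsilon)$ with the entropy $\eta^*$ of \eqref{eq:special entropy}, for which $\nabla^2\eta^* = I$, yields the energy identity $\tfrac{\der}{\der t}\int_\mathbb{R} \eta^*(\bu^\varepsilon)\,\d x = -\varepsilon\int_\mathbb{R}|\bu^\varepsilon_x|^2\,\d x$, whence $\sup_{t>0}\|\bu^\varepsilon(t)\|_{L^2}^2 \le \|\bu_0\|_{L^2}^2$ and $\varepsilon\int_{\mathbb{R}^2_+}|\bu^\varepsilon_x|^2 \le \tfrac12\|\bu_0\|_{L^2}^2$.

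\textbf{Compensated compactness and reduction.} For any $C^2$ entropy pair $(\eta,q)$ one has $\eta(\bu^\varepsilon)_t + q(\bu^\varepsilon)_x = \varepsilon\big(\nabla\eta(\bu^\varepsilon)\cdot\bu^\varepsilon_x\big)_x - \varepsilon\,(\bu^\varepsilon_x)^\top\nabla^2\eta(\bu^\varepsilon)\,\bu^\varepsilon_x$; the first term tends to $0$ in $H^{-1}_{\loc}(\mathbb{R}^2_+)$ and the second is bounded in $L^1_{\loc}$, both by the estimates above, so by Murat's lemma $\eta(\bu^\varepsilon)_t + q(\bu^\varepsilon)_x$ is precompact in $H^{-1}_{\loc}(\mathbb{R}^2_+)$. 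Passing to a subsequence, $\bu^\varepsilon$ generates a Young measure $\nu_{t,x}$ supported in the invariant region, and the div--curl lemma gives the Tartar commutation relation $\overline{\eta_1 q_2 - \eta_2 q_1} = \overline{\eta_1}\,\overline{q_2} - \overline{\eta_2}\,\overline{q_1}$ for all entropy pairs. The reduction argument of \S\ref{sec:comp comp framework} (Proposition \ref{lem:comp comp}), which exploits the entropy kernels of Definition \ref{def:entropy kernels}, then forces $\nu_{t,x} = \delta_{\bu(t,x)}$ for a.e.\ $(t,x)$, so that $\bu^\varepsilon \to \bu$ a.e.\ (and in $L^p_{\loc}$ for $p<\infty$) along the subsequence, with $\bu$ valued in the invariant region; in particular $\sigma \ge c_0 > 0$ and $\essinf_{\mathbb{R}^2_+}(\sigma - |\beta|) \ge c_0$.

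\textbf{Passage to the limit.} Since $\bu^\varepsilon \to \bu$ a.e.\ with values in a fixed compact set where $\bF$ is continuous and bounded, dominated convergence gives $\bF(\bu^\varepsilon) \to \bF(\bu)$ in $L^1_{\loc}$, while $|\varepsilon\int \bu^\varepsilon\varphi_{xx}| \le \varepsilon\|\bu^\varepsilon\|_{L^\infty}\|\varphi_{xx}\|_{L^1} \to 0$ and $\bu_0^\varepsilon \to \bu_0$ in $L^1_{\loc}$; passing to the limit in the weak form of the viscous equation shows $\bu$ is a distributional solution of \eqref{eq:consvn law carroll gamma 3} with data $\bu_0$. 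For any convex $C^2$ entropy $\eta$ the $L^1$-bounded term $-\varepsilon\,(\bu^\varepsilon_x)^\top\nabla^2\eta(\bu^\varepsilon)\,\bu^\varepsilon_x \le 0$ and $\varepsilon\big(\nabla\eta(\bu^\varepsilon)\cdot\bu^\varepsilon_x\big)_x \to 0$ in the sense of distributions, so in the limit $\eta(\bu)_t + q(\bu)_x \le 0$; together with $\sigma \ge 0$ a.e.\ this makes $\bu$ an entropy solution in the sense of Definition \ref{def:ent sol}. Finally, by Fatou's lemma $\|\bu(t)\|_{L^2}^2 \le \liminf_\varepsilon\|\bu^\varepsilon(t)\|_{L^2}^2 \le \|\bu_0\|_{L^2}^2$ for a.e.\ $t$, giving $\bu \in L^\infty_{\loc}(0,\infty;L^2(\mathbb{R}))$.

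\textbf{Main obstacle.} The genuinely hard inputs are not in this assembly but in the two ingredients it relies on: the construction of the viscous approximations together with the propagation of admissibility $\sigma - |\beta| \ge c_0$, which is delicate because $\nabla_\bu\bF$ degenerates on $\beta = \pm\sigma$ and forces a careful invariant-region analysis (\S\ref{sec:unif est}); and the reduction of the Young measure to a Dirac mass, which hinges on the entropy/entropy-flux structure of the $\gamma=3$ system and is complicated by the fact that the flux kernels $\varsigma_j$ are differential operators rather than ordinary functions (\S\ref{sec:ent struct and cc}). Granting those, the proof of the theorem is exactly the limiting procedure above.
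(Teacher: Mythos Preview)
Your proposal is correct and follows essentially the same route as the paper: viscous approximation, uniform $L^\infty$ bounds via the invariant region in Riemann coordinates, the dissipation estimate from $\eta^*$, $H^{-1}_{\loc}$ compactness of the entropy dissipation measures, Young-measure reduction via Proposition~\ref{lem:comp comp}, and passage to the limit. The only cosmetic differences are that the paper does not mollify the initial data (Proposition~\ref{prop:approx pbms} takes $\bu_0\in L^2\cap L^\infty$ directly) and performs the energy estimate with a spatial cutoff, yielding a bound growing linearly in $T$ rather than your clean global identity; neither affects the argument.
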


We emphasise that $c_0>0$ in \eqref{eq:initial condition essinf} is fixed arbitrarily in the statement of Theorem \ref{thm:main_i} and does not change throughout the rest of the paper; this condition is of course equivalent to 
\begin{equation*}
    \essinf_\mathbb{R} \big( \sigma_0 - |\beta_0| \big) > 0, 
\end{equation*}
however we prefer this formulation to simplify notation in \S \ref{sec:unif est} and to be uniform in our statement of condition \eqref{eq:ic essinf second} in Theorem \ref{thm:main_ii}. While the requirement $(\sigma_0,\beta_0) \in L^2(\mathbb{R})$ may seem excessive at first sight, this is a natural assumption given the special entropy pair \eqref{eq:special entropy}, which is in a sense the most natural underlying strongly convex entropy of the system.

Our second main theorem is concerned with the weak continuity of the system of equations, which shows the weak compactness of entropy solutions of \eqref{eq:carroll_eqns_i} for $\gamma=3$. Once again, this result is obtained as a consequence of the compensated compactness framework established in \S \ref{sec:comp comp framework}; Proposition \ref{lem:comp comp}.

\begin{theorem}[Weak Continuity]\label{thm:main_ii}
    Let $c_0>0$ and $\{(\sigma^n,\beta^n)\}_n \subset L^2(\mathbb{R}) \cap L^\infty(\mathbb{R})$ be a sequence of entropy solutions of \eqref{eq:carroll_eqns_i} with $\gamma=3$, uniformly bounded in $L^\infty(\mathbb{R}^2_+)$ and satisfying 
    \begin{equation}\label{eq:ic essinf second}
           \essinf_{\mathbb{R}^2_+} \big( \sigma^n - |\beta^n| \big) \geq c_0 \quad \text{for all }\ n \in \mathbb{N}. 
        \end{equation}
        Then there exists $(\sigma,\beta) \in L^\infty(\mathbb{R}^2_+)$ and a subsequence, still labelled $\{(\sigma^n,\beta^n)\}_n$, such that 
        \begin{equation*}
            (\sigma^n,\beta^n) \longrightarrow (\sigma,\beta) \quad \text{a.e.~in }\ \mathbb{R}^2_+, 
        \end{equation*}
        and furthermore $(\sigma,\beta)$ is an entropy solution of \eqref{eq:carroll_eqns_i} with $\gamma=3$. 
\end{theorem}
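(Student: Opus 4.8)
The strategy is to run DiPerna's compensated compactness scheme directly on the sequence $\bu^n = (\sigma^n,\beta^n)$, using the entropy inequalities of Definition~\ref{def:ent sol}(ii) in place of a vanishing-viscosity energy estimate to produce the $H^{-1}$-compactness needed for the div--curl lemma, and then invoking the reduction of Proposition~\ref{lem:comp comp} to pass from Young measures to a.e.\ convergence. First, by the uniform $L^\infty$ bound together with \eqref{eq:ic essinf second} there is a fixed compact set $\mathcal{K}\subset\{\,\sigma-|\beta|\geq c_0\,\}$, disjoint from the degeneracy lines $\{\beta=\pm\sigma\}$, with $\bu^n(t,x)\in\mathcal{K}$ for a.e.\ $(t,x)$ and all $n$; on (a neighbourhood of) $\mathcal{K}$ the flux $\bF$ of \eqref{eq:flux when gamma is 3} and every entropy/entropy-flux pair are smooth and bounded. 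The fundamental theorem on Young measures then yields, after passing to a subsequence, a weakly-$*$ measurable family of probability measures $\{\nu_{t,x}\}$ supported in $\mathcal{K}$ with $g(\bu^n)\rightharpoonup\overline{g}=\int g\d\nu_{t,x}$ weakly-$*$ in $L^\infty(\mathbb{R}^2_+)$ for every $g\in C^0(\mathcal{K})$.

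Next I would establish $H^{-1}_{\loc}$-compactness of the entropy dissipations. Let $(\eta,q)$ be a $C^2$ entropy pair. Since $\eta^*$ in \eqref{eq:special entropy} has Hessian equal to the identity, on a neighbourhood of $\mathcal{K}$ we may write $\eta=(\eta+\lambda\eta^*)-\lambda\eta^*$ with $\lambda$ large, a difference of two $C^2$ \emph{convex} entropies; it therefore suffices to treat convex $\eta$. For convex $(\eta,q)$, the entropy inequality \eqref{eq:entropy inequality} shows that $\mu_n:=-\big(\eta(\bu^n)_t+q(\bu^n)_x\big)$ is a non-negative Radon measure, and testing against a cutoff equal to $1$ on $K\Subset\mathbb{R}^2_+$ bounds $\mu_n(K)$ uniformly in $n$ (using that $\eta(\bu^n),q(\bu^n)$ are uniformly bounded on $\mathcal{K}$). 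Hence $\eta(\bu^n)_t+q(\bu^n)_x$ is bounded in $\mathcal{M}_{\loc}(\mathbb{R}^2_+)$ and, being a sum of $t$- and $x$-derivatives of $L^\infty$ functions, also bounded in $W^{-1,\infty}_{\loc}$; Murat's lemma gives precompactness in $H^{-1}_{\loc}(\mathbb{R}^2_+)$. Taking the difference of the two convex pieces, the same holds for a general $C^2$ entropy pair.

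For any two $C^2$ entropy pairs $(\eta_i,q_i)$, $i=1,2$, I would then apply the div--curl lemma to $\big(\eta_1(\bu^n),q_1(\bu^n)\big)$ and $\big(q_2(\bu^n),-\eta_2(\bu^n)\big)$---bounded in $L^\infty$ with divergence and curl precompact in $H^{-1}_{\loc}$ by the previous step---to obtain the Tartar commutation identity
\begin{equation*}
    \overline{\eta_1 q_2-\eta_2 q_1}=\overline{\eta_1}\,\overline{q_2}-\overline{\eta_2}\,\overline{q_1}\qquad\text{a.e. in }\mathbb{R}^2_+ .
\end{equation*}
This applies in particular to the pairs $(\eta_j^\varphi,q_j^\varphi)$ of \eqref{eq:generate my entropies} generated by the kernels of Definition~\ref{def:entropy kernels}, for $\varphi$ ranging over the admissible classes $\mathcal{A}$, $\mathcal{B}$ of \eqref{eq:admissible test functions}. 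The reduction argument of the compensated compactness framework (Proposition~\ref{lem:comp comp}) then forces $\nu_{t,x}=\delta_{\bu(t,x)}$ for a.e.\ $(t,x)$, where $\bu=(\sigma,\beta)\defeq(\overline{\sigma},\overline{\beta})\in L^\infty(\mathbb{R}^2_+)$. Since the Young measures are a.e.\ Dirac, $\bu^n\to\bu$ a.e.\ (and in every $L^p_{\loc}$, $p<\infty$) along the subsequence.

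Finally, as $\bu^n\to\bu$ a.e.\ and is uniformly bounded with values in $\mathcal{K}$, on which $\bF$ is continuous, dominated convergence lets us pass to the limit in the weak formulation (i) of Definition~\ref{def:ent sol} and in the entropy inequalities \eqref{eq:entropy inequality} for every $C^2$ convex entropy pair; and the bound $\sigma-|\beta|\geq c_0$ (hence $\sigma\geq c_0>0$) passes to the a.e.\ limit, so $(\sigma,\beta)$ is an entropy solution of \eqref{eq:carroll_eqns_i} for $\gamma=3$, with initial datum the limit of the $(\sigma_0^n,\beta_0^n)$. The genuinely deep ingredient is the Dirac reduction of Proposition~\ref{lem:comp comp}, which is taken as given here; within the present argument the step demanding the most care is the $H^{-1}_{\loc}$-compactness of the entropy dissipations---producing it directly from the entropy inequalities of a sequence of \emph{entropy} solutions rather than from a parabolic estimate, handling non-convex $C^2$ entropies via the decomposition through $\eta^*$, and, above all, using the uniform admissibility and $L^\infty$ bounds to confine $\bu^n$ to a compact set away from the singular lines $\{\beta=\pm\sigma\}$ where $\bF$ and the entropy-fluxes blow up.
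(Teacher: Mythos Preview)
Your proposal is correct and follows essentially the same route as the paper: reduce to convex entropies via the decomposition $\eta=(\eta+\lambda\eta^*)-\lambda\eta^*$, use the entropy inequality and the uniform $L^\infty$ bound together with Murat's lemma to obtain $H^{-1}_{\loc}$-compactness of the dissipation measures, and then invoke the compensated compactness framework of Proposition~\ref{lem:comp comp} to conclude a.e.\ convergence and pass to the limit. The only cosmetic difference is that the paper packages the Young-measure and div--curl steps inside Proposition~\ref{lem:comp comp} rather than spelling them out, and phrases the convexification as $F^n=D^n+C_0 d^n$ rather than as a difference of two convex pieces.
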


Our third main theorem quantifies the entropy dissipation \eqref{eq:entropy inequality} in our low regularity framework; it is called the \emph{kinetic formulation} of the system of conservation laws \eqref{eq:consvn law carroll gamma 3} (\textit{cf.}~\cite{PerthameBook}), and makes use of the entropy kernels introduced in Definition \ref{def:entropy kernels} and the function spaces $\mathcal{A}$ and $\mathcal{B}$ introduced in \eqref{eq:admissible test functions}. This theorem establishes a PDE version of the second law of thermodynamics for the isentropic Carrollian fluid system.

\begin{theorem}[Kinetic Formulation]\label{thm:main_iii}
    Let $(\sigma,\beta)$ be an entropy solution obtained in Theorem \ref{thm:main_i}. Then there exists $\mu_1$ a non-negative bounded Radon measure on $\mathbb{R}^2_+\times\mathbb{R}$, and $\mu_2$ a signed bounded Radon measure on $\mathbb{R}^2_+\times\mathbb{R}$ such that 
    \begin{equation}\label{eq:kinetic formulation 1}
        \partial_t \big(\chi_1(\sigma(t,x),\beta(t,x),s)\big) + \partial_x \big(\varsigma_1(\sigma(t,x),\beta(t,x),s)\big) = -\partial^2_s \mu_1(t,x,s) \quad \text{in } \big( C^2_c(\mathbb{R}^2_+)\times\mathcal{A} \big)', 
    \end{equation}
    and 
    \begin{equation}\label{eq:kinetic formulation 2}
        \partial_t \big(\chi_2(\sigma(t,x),\beta(t,x),s)\big) + \partial_x \big(\varsigma_2(\sigma(t,x),\beta(t,x),s)\big) = -\partial_s \mu_2(t,x,s) \quad \text{in } \big( C^2_c(\mathbb{R}^2_+)\times\mathcal{B} \big)'. 
        \end{equation}
\end{theorem}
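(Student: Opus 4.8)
The plan is to read off the two kinetic measures from the entropy dissipation of the vanishing-viscosity approximation constructed in \S\ref{sec:unif est}, and then to pass to the limit $\varepsilon\to0$. Write $\bu^\varepsilon=(\sigma^\varepsilon,\beta^\varepsilon)$ for the viscous solutions of $\bu^\varepsilon_t+\bF(\bu^\varepsilon)_x=\varepsilon\bu^\varepsilon_{xx}$; by \S\ref{sec:unif est} these are smooth, take values in a fixed compact invariant region $K\subset\{\sigma-|\beta|\geq c_0\}$, and --- by the proof of \Cref{thm:main_i} through the compensated compactness framework of \S\ref{sec:comp comp framework} --- converge along a subsequence, which I fix henceforth, almost everywhere in $\mathbb{R}^2_+$ to the entropy solution $\bu=(\sigma,\beta)$ of \Cref{thm:main_i}. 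For any $\varphi$ in $\mathcal{A}$ or $\mathcal{B}$, \S\ref{sec:entropy pairs} provides the entropy pair $(\eta_j^\varphi,q_j^\varphi)$ of \eqref{eq:generate my entropies}; contracting the viscous equation with $\nabla\eta_j^\varphi(\bu^\varepsilon)$ and using the entropy relation $\nabla q_j^\varphi=(\nabla_\bu\bF)^{\!\top}\nabla\eta_j^\varphi$ gives the pointwise identity, for $j=1,2$,
\[
\partial_t\eta_j^\varphi(\bu^\varepsilon)+\partial_x q_j^\varphi(\bu^\varepsilon)=\varepsilon\,\partial_{xx}\eta_j^\varphi(\bu^\varepsilon)-\varepsilon\,\nabla^2_\bu\eta_j^\varphi(\bu^\varepsilon):(\bu^\varepsilon_x\otimes\bu^\varepsilon_x).
\]
Integrating this against a test function $\psi(t,x)\varphi(s)$ and letting $\varepsilon\to0$ will produce \eqref{eq:kinetic formulation 1}--\eqref{eq:kinetic formulation 2}, once the right-hand side has been recast as (minus) a derivative in $s$ of a measure.

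The second step is to identify that measure. In the Riemann coordinates $w_1=\sigma+\beta$, $w_2=\sigma-\beta$ both entropies decouple, $\eta_1^\varphi=\varphi(w_1)+\varphi(-w_2)$ and $\eta_2^\varphi=\int_{-w_2}^{w_1}\varphi(s)\,\der s$, so that (the change of variables being linear) $\nabla^2_\bu\eta_1^\varphi:(\bu^\varepsilon_x\otimes\bu^\varepsilon_x)=\varphi''(w_1^\varepsilon)|\partial_x w_1^\varepsilon|^2+\varphi''(-w_2^\varepsilon)|\partial_x w_2^\varepsilon|^2$, and similarly for $j=2$ with $\varphi''$ replaced by $\varphi'$ and a relative minus sign between the two contributions (reflecting that $\eta_2^\varphi$ need not be convex). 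I therefore define, on $\mathbb{R}^2_+\times\mathbb{R}$, the Radon measures
\begin{align*}
\mu_1^\varepsilon &\defeq \varepsilon\big(|\partial_x w_1^\varepsilon|^2\,\delta_{\{s=w_1^\varepsilon\}}+|\partial_x w_2^\varepsilon|^2\,\delta_{\{s=-w_2^\varepsilon\}}\big)\,\der x\,\der t, \\
\mu_2^\varepsilon &\defeq \varepsilon\big(|\partial_x w_1^\varepsilon|^2\,\delta_{\{s=w_1^\varepsilon\}}-|\partial_x w_2^\varepsilon|^2\,\delta_{\{s=-w_2^\varepsilon\}}\big)\,\der x\,\der t,
\end{align*}
where $\delta_{\{s=w_i^\varepsilon(t,x)\}}$ is the Dirac mass in the $s$-variable --- equivalently, $\mu_j^\varepsilon$ is a signed combination of pushforwards of $\varepsilon|\partial_x w_i^\varepsilon|^2\,\der x\,\der t$ under $(t,x)\mapsto(t,x,\pm w_i^\varepsilon(t,x))$. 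Then $\mu_1^\varepsilon\geq0$ automatically and $\mu_2^\varepsilon$ is genuinely signed, both are supported in $\mathbb{R}^2_+\times[-R,R]$ for an $\varepsilon$-independent $R$ (since $K$ is bounded), and for $\psi\in C^2_c(\mathbb{R}^2_+)$ one has $\int\psi\varphi''\,\der\mu_1^\varepsilon=\int_{\mathbb{R}^2_+}\varepsilon\psi\,\nabla^2\eta_1^\varphi:(\bu^\varepsilon_x\otimes\bu^\varepsilon_x)$ and $\int\psi\varphi'\,\der\mu_2^\varepsilon=\int_{\mathbb{R}^2_+}\varepsilon\psi\,\nabla^2\eta_2^\varphi:(\bu^\varepsilon_x\otimes\bu^\varepsilon_x)$.

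The third step is the uniform bound and the limit. The total variations satisfy $|\mu_j^\varepsilon|(\mathbb{R}^2_+\times\mathbb{R})\leq 2\varepsilon\|\bu^\varepsilon_x\|_{L^2(\mathbb{R}^2_+)}^2$, and the latter is bounded uniformly in $\varepsilon$: contracting the viscous equation with the canonical convex entropy $\eta^*$ of \eqref{eq:special entropy}, for which $\nabla^2\eta^*=\mathrm{Id}$, and integrating over $\mathbb{R}^2_+$ gives $\varepsilon\|\bu^\varepsilon_x\|_{L^2(\mathbb{R}^2_+)}^2\leq\|\eta^*(\bu_0^\varepsilon)\|_{L^1(\mathbb{R})}$, which is bounded by a constant depending only on $\|\bu_0\|_{L^2(\mathbb{R})}$ --- this is exactly where the $L^2$-hypothesis on the data in \Cref{thm:main_i} enters. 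Consequently, along a further subsequence $\mu_1^\varepsilon\overset{*}{\rightharpoonup}\mu_1$ and $\mu_2^\varepsilon\overset{*}{\rightharpoonup}\mu_2$ with $\mu_1\geq0$ and $\mu_2$ signed, both bounded Radon measures on $\mathbb{R}^2_+\times\mathbb{R}$ by lower semicontinuity of the mass. I then pass to the limit in the identity of the first step, tested against $\psi(t,x)\varphi(s)$ with $\psi\in C^2_c(\mathbb{R}^2_+)$ and $\varphi\in\mathcal{A}$ (resp.\ $\mathcal{B}$): the left-hand term $-\int(\eta_j^\varphi(\bu^\varepsilon)\psi_t+q_j^\varphi(\bu^\varepsilon)\psi_x)$ converges by dominated convergence, using $\bu^\varepsilon\to\bu$ a.e.\ and the continuity and boundedness of $\eta_j^\varphi,q_j^\varphi$ on the compact set $K$; the term $\varepsilon\int\eta_j^\varphi(\bu^\varepsilon)\psi_{xx}$ is $O(\varepsilon)$; and $\int\psi\varphi''\,\der\mu_1^\varepsilon\to\int\psi\varphi''\,\der\mu_1$ (resp.\ $\int\psi\varphi'\,\der\mu_2^\varepsilon\to\int\psi\varphi'\,\der\mu_2$) by weak-$*$ convergence, the uniform $s$-support removing the issue that $\psi\varphi''$ is not compactly supported in $s$. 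Unwinding the duality pairings in $\big(C^2_c(\mathbb{R}^2_+)\times\mathcal{A}\big)'$ and $\big(C^2_c(\mathbb{R}^2_+)\times\mathcal{B}\big)'$ gives precisely \eqref{eq:kinetic formulation 1} and \eqref{eq:kinetic formulation 2}.

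The hard part is not the limiting argument but the bookkeeping around the entropy-flux kernels \eqref{eq:kernel 1}--\eqref{eq:kernel 2}. One needs to know that for every $\varphi\in\mathcal{A}$ (resp.\ $\mathcal{B}$) the pair $(\eta_j^\varphi,q_j^\varphi)$ is a genuine $C^1$ entropy pair --- so that the viscous chain-rule identity of the first step is legitimate --- and that $q_j^\varphi$ is continuous and bounded on $K$ in spite of the singular factors $1/s$ and $\tfrac{\der}{\der s}$ appearing in the kernels; it is precisely the conditions $\varphi'(0)=0$ and $\varphi(0)=0$ that make $\varphi'(s)/s$ and $\varphi(s)/s$ continuous across $s=0$, which is why the admissible test function classes are defined as in \eqref{eq:admissible test functions}. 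All of this is the content of \S\ref{sec:entropy pairs} and is to be invoked here. A secondary subtlety worth flagging is that $\eta_1^\varphi$ fails to be convex for general $\varphi\in\mathcal{A}$, so $\mu_1$ cannot be extracted directly from the entropy inequality of \Cref{def:ent sol}(ii); routing the argument through the viscous approximation, where the nonnegativity of $\mu_1^\varepsilon$ is manifest from its very definition, is what makes the sign of $\mu_1$ transparent.
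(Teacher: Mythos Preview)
Your proof is correct and follows essentially the same route as the paper's: both define the viscous kinetic measures $\mu_j^\varepsilon$ as Dirac masses on the graphs $s=\beta^\varepsilon\pm\sigma^\varepsilon$ weighted by $\varepsilon|\partial_x(\sigma^\varepsilon\pm\beta^\varepsilon)|^2$ (your Riemann-invariant expressions are exactly the paper's \eqref{eq:first entropy dissipation explicit}--\eqref{eq:second entropy dissipation explicit} and \eqref{eq:dissipations defis} rewritten), bound them uniformly via the dissipation estimate for the special entropy $\eta^*$, extract weak-$*$ limits, and pass to the limit on the entropy/flux side by dominated convergence using $\bu^\varepsilon\to\bu$ a.e. Your remark that the argument must be routed through the viscosity because $\eta_1^\varphi$ need not be convex is a nice point the paper leaves implicit; the only minor overreach is the claimed global-in-time bound $\varepsilon\|\bu^\varepsilon_x\|_{L^2(\mathbb{R}^2_+)}^2\leq\|\eta^*(\bu_0)\|_{L^1}$, whereas the paper's Lemma~\ref{lem:dissipation} only gives $C(t)$ on finite time slabs --- but this is harmless for extracting a Radon measure and the paper's own statement is equally loose on this point.
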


\section{Hyperbolicity, Genuine Nonlinearity, and Riemann Invariants}\label{sec:hyperbolicity}

In this section, we establish the hyperbolicity and genuine nonlinearity of the system; both of which are essential ingredients to apply the compensated compactness method (\textit{cf.}~\S \ref{sec:comp comp framework}). We also compute the Riemann invariants of the systems, which shall subsequently be used in \S \ref{sec:unif est} to compute the invariant regions of the phase space to which solutions are constrained. 

\subsection{Hyperbolicity} 

\begin{lemma}[Strict hyperbolicity]\label{lem:strictly hyp}
  For $\gamma=3$, the system \eqref{eq:consvn law carroll gamma 3} is strictly hyperbolic in the subregion 
    \begin{equation*}
        \mathcal{H} \defeq \Big\{ (\sigma,\beta) \in \mathbb{R}^2 : \, \sigma \neq 0, \, \beta \neq \pm \sigma  \Big\}. 
    \end{equation*}
\end{lemma}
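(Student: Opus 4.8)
The plan is to read off the eigenstructure of the Jacobian directly from the identity \eqref{eq:what it needs to be}. For $\gamma = 3$ we have $\sigma^{\gamma-2} = \sigma$, so $\nabla_\bu\bF(\bu) = M^{-1}$ with
\[
    M = \begin{pmatrix} \beta & \sigma \\ \sigma & \beta \end{pmatrix},
\]
which is \emph{symmetric}. Hence $M$ is diagonalized by the constant (and orthogonal) basis $\{(1,1)^\top, (1,-1)^\top\}$, with $M(1,1)^\top = (\beta+\sigma)(1,1)^\top$ and $M(1,-1)^\top = (\beta-\sigma)(1,-1)^\top$; that is, $M$ has real eigenvalues $\beta\pm\sigma$. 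Note that these are, up to sign, exactly the functions $w_1 = \sigma+\beta$ and $w_2 = \sigma - \beta$ that we will later identify as Riemann invariants in \S\ref{sec:riemann invariants}.

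Next I would restrict to $\mathcal{H}$. On this set $\beta \neq \pm\sigma$, so both eigenvalues $\beta\pm\sigma$ of $M$ are nonzero, $M$ is invertible (consistent with $\det M = \beta^2 - \sigma^2 \neq 0$ and with the formula \eqref{eq:flux when gamma is 3} being well-defined there), and $M^{-1}$ is diagonalized by the \emph{same} eigenbasis $\{(1,1)^\top,(1,-1)^\top\}$ with reciprocal eigenvalues
\[
    \lambda_1(\bu) = \frac{1}{\beta+\sigma}, \qquad \lambda_2(\bu) = \frac{1}{\beta-\sigma}.
\]
These are real, and the accompanying eigenvectors $(1,1)^\top, (1,-1)^\top$ are linearly independent, so the system is at least hyperbolic on $\mathcal{H}$. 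For \emph{strict} hyperbolicity I must check $\lambda_1 \neq \lambda_2$: since $\lambda_1 = \lambda_2$ would force $\beta + \sigma = \beta - \sigma$, i.e.\ $\sigma = 0$, and $\sigma \neq 0$ on $\mathcal{H}$, the two eigenvalues are distinct at every point of $\mathcal{H}$. This establishes the claim.

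There is no real obstacle here; the computation is short precisely because $M$ is symmetric with a constant eigenbasis, so $M^{-1}$ inherits that eigenbasis and one only has to invert scalars. The only points worth care are (i) recording that the two ways strict hyperbolicity can fail are exactly $\sigma = 0$ (eigenvalue coincidence) and $\beta = \pm\sigma$ ($M$ singular, eigenvalues blowing up), matching the discussion in the introduction, and (ii) that the eigenvalues $1/w_1$ and $-1/w_2$ are what will feed into the genuine-nonlinearity computation and the invariant-region analysis of \S\ref{sec:unif est}. One could also phrase the argument coordinate-free by diagonalizing $\nabla_\bu\bF$ in the $(w_1,w_2)$ variables, but the direct eigenvector computation above is the most economical route.
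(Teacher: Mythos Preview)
Your proof is correct and follows essentially the same route as the paper: compute the eigenvalues $\beta\pm\sigma$ of the symmetric matrix $M$, invert them to obtain the eigenvalues of $\nabla_\bu\bF = M^{-1}$, and observe that these are real and distinct precisely on $\mathcal{H}$. The only cosmetic difference is that the paper uses the opposite labeling, $\lambda_1 = 1/(\beta-\sigma)$ and $\lambda_2 = 1/(\beta+\sigma)$, so for consistency with the subsequent genuine-nonlinearity and Riemann-invariant computations you should swap your indices.
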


\begin{proof}
The eigenvalues of $M$ are given by 
\begin{equation*}
    \lambda_1' = \beta - \sigma, \quad \lambda_2' = \beta + \sigma, 
\end{equation*}
whence the eigenvalues of $M^{-1}$ are given by 
\begin{equation*}
    \lambda_1 = \frac{1}{\beta - \sigma}, \quad \lambda_2 = \frac{1}{\beta + \sigma}, 
\end{equation*}
and thus the system is strictly hyperbolic except along the curves $\{\sigma = 0\}\cup \{\beta = \pm \sigma\}$; the eigenvalues are real and distinct on $\mathcal{H}$.
\end{proof}

Furthermore, a set of normalised right eigenvectors of $M$ is provided by 
\begin{equation*}
    \br_1 = \frac{1}{\sqrt{2}}\left( \begin{matrix}
        1 \\ 
        -1
    \end{matrix} \right)    , \quad \br_2 =  \frac{1}{\sqrt{2}}\left( \begin{matrix}
        1 \\ 
        1
    \end{matrix} \right); 
\end{equation*}
they are also right eigenvectors for $M^{-1}$, with eigenvalues given by $\lambda_1,\lambda_2$ respectively. In turn, we have the diagonalised form: 
\begin{equation}\label{eq:diagonalised jacobian F}
    \nabla_\bu \bF(\bu) = \frac{1}{2}\left( \begin{matrix}
        1 & 1 \\ 
        -1 & 1
    \end{matrix} \right) \left( \begin{matrix}
        \frac{1}{\beta-\sigma} & 0 \\ 
        0 & \frac{1}{\beta+\sigma} 
    \end{matrix} \right) \left( \begin{matrix}
        1 & -1 \\ 
        1 & 1
    \end{matrix} \right). 
\end{equation}

\subsection{Genuine Nonlinearity} 

\begin{lemma}[Genuine Nonlinearity]\label{lem:genuinely nonlinear}
    For $\gamma=3$, the system \eqref{eq:consvn law carroll gamma 3} is genuinely nonlinear, including along the curves $\{\sigma=0\}$ and $\{\beta = \pm \sigma\}$. 
\end{lemma}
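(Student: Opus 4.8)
\textbf{Proof plan for Lemma \ref{lem:genuinely nonlinear}.} Genuine nonlinearity of the $i$-th characteristic family is the statement that $\nabla_\bu \lambda_i \cdot \br_i \neq 0$ on the relevant region of phase space, where $\lambda_i$ are the eigenvalues of $\nabla_\bu \bF(\bu)$ and $\br_i$ the corresponding right eigenvectors. Since we already have explicit formulas for both from \S\ref{sec:hyperbolicity} --- namely $\lambda_1 = (\beta-\sigma)^{-1}$, $\lambda_2 = (\beta+\sigma)^{-1}$, and $\br_1 = \tfrac{1}{\sqrt2}(1,-1)^\top$, $\br_2 = \tfrac{1}{\sqrt2}(1,1)^\top$ --- the plan is to simply compute these directional derivatives. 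First I would compute $\nabla_\bu \lambda_1 = \big( (\beta-\sigma)^{-2}, -(\beta-\sigma)^{-2}\big)$ (gradient with respect to $(\sigma,\beta)$), so that
\begin{equation*}
  \nabla_\bu \lambda_1 \cdot \br_1 = \tfrac{1}{\sqrt2}\Big( \tfrac{1}{(\beta-\sigma)^2} + \tfrac{1}{(\beta-\sigma)^2}\Big) = \tfrac{\sqrt2}{(\beta-\sigma)^2},
\end{equation*}
and similarly $\nabla_\bu \lambda_2 \cdot \br_2 = \tfrac{\sqrt2}{(\beta+\sigma)^2}$. Both expressions are manifestly strictly positive wherever they are defined, i.e.\ away from the lines $\beta = \pm\sigma$; in particular they do not vanish as $\sigma \to 0$, which is why the conclusion extends across $\{\sigma = 0\}$.

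Second, I would address the curves $\{\beta = \pm\sigma\}$, where the eigenvalues themselves blow up and the naive computation above is meaningless. The cleanest way to make the statement meaningful there is to work with the reciprocal quantities: recall $\nabla_\bu \bF(\bu) = M^{-1}$ with $M$ having eigenvalues $\lambda_i' = 1/\lambda_i$, and genuine nonlinearity of the $i$-th family for $\bF$ is equivalent to $\nabla_\bu \lambda_i' \cdot \br_i \neq 0$ (same eigenvectors), since $\nabla \lambda_i \cdot \br_i = -\lambda_i^2\, \nabla \lambda_i' \cdot \br_i$ and $\lambda_i \neq 0$ away from $\{\beta = \pm\sigma\}$. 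Now $\lambda_1' = \beta - \sigma$ and $\lambda_2' = \beta + \sigma$ are \emph{globally smooth} on all of $\mathbb{R}^2$, so this reformulated condition makes sense everywhere; one computes $\nabla_\bu \lambda_1' \cdot \br_1 = \tfrac{1}{\sqrt2}(-1 - 1) = -\sqrt2 \neq 0$ and $\nabla_\bu \lambda_2' \cdot \br_2 = \tfrac{1}{\sqrt2}(1+1) = \sqrt2 \neq 0$, uniformly, with no exceptional locus. This is the correct sense in which the system is genuinely nonlinear even along $\{\beta = \pm\sigma\}$, and I would state the lemma's conclusion in terms of these (equivalent, on $\mathcal{H}$, and well-defined everywhere) reciprocal eigenpairs.

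The only real subtlety --- and the step I expect to require the most care in the write-up --- is making precise what ``genuinely nonlinear along $\{\beta = \pm\sigma\}$'' even means, given that the standard definition (via $\nabla\lambda_i \cdot \br_i$) refers to objects that are singular there; the resolution, as above, is to observe that $\nabla\bu\bF = M^{-1}$ and that the characteristic structure of $M$ is smooth and non-degenerate in exactly the genuine-nonlinearity sense across those lines, so that one should phrase the statement in terms of $M$ rather than $M^{-1}$. Everything else is a one-line computation using the eigendata already established. I would also remark that the sign pattern (both reciprocal directional derivatives have constant, definite sign) means each characteristic family is genuinely nonlinear in the strong, non-vanishing sense throughout $\mathbb{R}^2$, which is exactly what is needed to feed into the compensated compactness machinery of \S\ref{sec:comp comp framework}.
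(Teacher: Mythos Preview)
Your core approach is the same as the paper's: compute $\nabla_\bu \lambda_j \cdot \br_j$ directly from the explicit eigendata and observe it never vanishes. Two remarks. First, you have a sign slip: $\nabla_\bu \lambda_2 = -(\beta+\sigma)^{-2}(1,1)^\top$, so $\nabla_\bu \lambda_2 \cdot \br_2 = -\sqrt{2}/(\beta+\sigma)^2$, not $+\sqrt{2}/(\beta+\sigma)^2$; this is harmless for the conclusion since only non-vanishing is required. Second, your treatment of $\{\beta = \pm\sigma\}$ via the reciprocal eigenvalues $\lambda_j'$ of $M$ is actually \emph{more} than the paper does --- the paper simply records the formulas $\pm\sqrt{2}/(\beta\mp\sigma)^2$ and declares ``genuine nonlinearity follows'', implicitly reading the blow-up on those lines as ``$\neq 0$'' without further comment. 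Your reformulation in terms of $M$ is a cleaner way to make sense of the claim there, though for the purposes of the rest of the paper (where the solutions are confined to the region $w_1,w_2 \geq c_0 > 0$, well away from $\{\beta=\pm\sigma\}$) the distinction is moot.
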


\begin{proof}
We verify the condition of genuine nonlinearity, \textit{i.e.}
\begin{equation*}
    \nabla_\bu \lambda_j \cdot \br_j \neq 0 
\end{equation*}
for $j \in \{ 1, 2 \}$. To this end, we compute 
\begin{equation*}
    \nabla_\bu \lambda_1 = \frac{1}{(\beta-\sigma)^2}\left(  \begin{matrix}
        1 \\ 
        -1
    \end{matrix}\right), \qquad \nabla_\bu \lambda_2 = -\frac{1}{(\beta+\sigma)^2}\left(  \begin{matrix}
        1 \\ 
        1
    \end{matrix}\right), 
\end{equation*}
whence 
\begin{equation*}
    \nabla_\bu \lambda_1 \cdot \br_1 = \frac{\sqrt{2}}{(\beta-\sigma)^2}, \qquad \nabla_\bu \lambda_2 \cdot \br_2 = -\frac{\sqrt{2}}{(\beta+\sigma)^2}, 
\end{equation*}
and genuine nonlinearity follows. 
\end{proof}

\subsection{Riemann Invariants}\label{sec:riemann invariants} 

\begin{lemma}[Riemann Invariants]\label{lem:riemann invariants}
    The functions 
    \begin{equation}\label{eq:riemann invariants}
    w_1  \defeq  \sigma+\beta    , \qquad    w_2  \defeq   \sigma-\beta   , 
\end{equation}
are Riemann invariants of the system \eqref{eq:consvn law carroll gamma 3} for $\gamma=3$. Furthermore, the mapping $(\sigma,\beta) \mapsto (w_1,w_2)$ is bijective. 
\end{lemma}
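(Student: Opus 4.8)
The plan is to verify directly that $w_1$ and $w_2$ are Riemann invariants, meaning that each $w_i$ is constant along the integral curves of the eigenvector field $\br_j$ for $j \neq i$ (equivalently, $\nabla_\bu w_i \cdot \br_j = 0$ for $i \neq j$), and then to check bijectivity of the change of coordinates by an explicit inversion.

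First I would recall from \S\ref{sec:hyperbolicity} that the right eigenvectors of $\nabla_\bu \bF(\bu) = M^{-1}$ are $\br_1 = \tfrac{1}{\sqrt 2}(1,-1)$ and $\br_2 = \tfrac{1}{\sqrt 2}(1,1)$, with associated eigenvalues $\lambda_1 = (\beta-\sigma)^{-1}$ and $\lambda_2 = (\beta+\sigma)^{-1}$. Since $w_1 = \sigma + \beta$ and $w_2 = \sigma - \beta$, we compute $\nabla_\bu w_1 = (1,1)$ and $\nabla_\bu w_2 = (1,-1)$. Then $\nabla_\bu w_1 \cdot \br_1 = \tfrac{1}{\sqrt 2}(1 - 1) = 0$, so $w_1$ is constant along $1$-rarefaction curves, i.e.\ $w_1$ is the $2$-Riemann invariant; and $\nabla_\bu w_2 \cdot \br_2 = \tfrac{1}{\sqrt 2}(1 - 1) = 0$, so $w_2$ is the $1$-Riemann invariant. (One also notes the non-degeneracy $\nabla_\bu w_1 \cdot \br_2 = \sqrt 2 \neq 0$ and $\nabla_\bu w_2 \cdot \br_1 = \sqrt 2 \neq 0$, confirming that these are genuinely the Riemann invariant pair rather than trivial.) This is essentially the whole content: the eigenvector structure is so simple here that the defining orthogonality relations hold by inspection.

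For bijectivity of $(\sigma,\beta) \mapsto (w_1,w_2)$, I would observe that the map is linear with matrix $\begin{psmallmatrix} 1 & 1 \\ 1 & -1 \end{psmallmatrix}$ of determinant $-2 \neq 0$, hence a linear isomorphism of $\mathbb{R}^2$, with explicit inverse $\sigma = \tfrac{1}{2}(w_1 + w_2)$, $\beta = \tfrac{1}{2}(w_1 - w_2)$. (If the intended statement is bijectivity onto the physically relevant region $\{\sigma \geq 0\}$ or the hyperbolicity region $\mathcal{H}$, one simply restricts: $\{\sigma \geq 0\}$ corresponds to $\{w_1 + w_2 \geq 0\}$, and $\mathcal H = \{\sigma \neq 0, \beta \neq \pm\sigma\}$ corresponds to $\{w_1 + w_2 \neq 0,\ w_1 \neq 0,\ w_2 \neq 0\}$, and the linear map restricts to a bijection between these sets.)

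There is no real obstacle here — this lemma is a routine verification, and the "hard part," such as it is, is merely bookkeeping about which invariant is associated with which characteristic family and making sure the orthogonality relations are set up with the correct index conventions. Everything follows from the diagonalisation \eqref{eq:diagonalised jacobian F} already established; the substantive difficulties of the paper lie elsewhere (the invariant-region argument in \S\ref{sec:unif est} and the entropy-flux analysis), not in this change of variables.
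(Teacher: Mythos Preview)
Your proposal is correct and takes essentially the same approach as the paper: both verify the defining orthogonality relations $\nabla_\bu w_j \cdot \br_j = 0$ by direct computation and then note that the linear change of coordinates is bijective (the paper simply calls it ``manifestly bijective'', while you give the explicit inverse). One small cosmetic point: your opening sentence states the convention $\nabla_\bu w_i \cdot \br_j = 0$ for $i \neq j$, but what you actually compute (correctly, and in agreement with the paper) is the same-index case $\nabla_\bu w_1 \cdot \br_1 = 0$ and $\nabla_\bu w_2 \cdot \br_2 = 0$; you may want to align the stated convention with the computation.
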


\begin{proof}
Direct computations show that for $j \in \{1, 2\}$
\begin{equation*}
    \nabla_{\bu}w_j \cdot \br_j = 0,
\end{equation*}
\emph{i.e.}~that $w_1,w_2$ is a set of Riemann invariants for the system. Furthermore, the curvilinear coordinate transformation 
\begin{equation*}
    (\sigma,\beta) \longmapsto (w_1,w_2) 
\end{equation*}
is manifestly bijective on the full phase plane of solutions. 
\end{proof}

We will see in \S \ref{sec:unif est} that the Riemann invariants will allow us to obtain a priori $L^\infty$ estimates on $\sigma$ and $\beta$. Note also that 
\begin{equation*}
    \nabla_\bu w_1 = \left(  \begin{matrix}
        1 \\ 1
    \end{matrix}\right) = \sqrt{2}\br_2, \qquad \nabla_\bu w_2 = \left( \begin{matrix}
        1 \\ -1
    \end{matrix} \right) = \sqrt{2}\br_1. 
\end{equation*}

\section{Entropy Structure and Compensated Compactness}\label{sec:ent struct and cc}

In this section, we compute two linearly independent families of entropy pairs of the system \eqref{eq:carroll_eqns_i} in \S \ref{sec:entropy pairs}, and then proceed to apply the compensated compactness method to obtain a general compactness framework in \ref{sec:comp comp framework}. This compactness framework will subsequently be used in \S \ref{sec:proofs of global exis and weak conti} to prove both the existence of a global entropy solution and the weak continuity of the system \eqref{eq:carroll_eqns_i}. 

\subsection{Entropy Structure}\label{sec:entropy pairs}

We seek pairs of functions $(\eta,q)$ for which one can obtain additional conservation laws from \eqref{eq:consvn law carroll gamma 3}. To this end, let $\eta \in C^2(\mathbb{R})$ and pre-multiply the system \eqref{eq:consvn law carroll gamma 3} by $\nabla_\bu \eta^\intercal$. Formally, \textit{i.e.}~assuming smooth $\bu$, we get 
\begin{equation*}
    \underbrace{\nabla_\bu \eta ^\intercal \bu_t}_{=\eta(\bu)_t} + \nabla_\bu \eta^\intercal \nabla_\bu \bF(\bu) \bu_x = 0. 
\end{equation*}
We seek to find a corresponding entropy-flux $q$ such that 
\begin{equation*}
    \nabla_\bu q^\intercal = \nabla_\bu \eta^\intercal \nabla_\bu \bF(\bu), 
\end{equation*}
\textit{i.e.}
\begin{equation*}
    q_\sigma =  \frac{\beta \eta_\sigma - \sigma \eta_\beta}{\beta^2-\sigma^2}  , \qquad q_\beta = \frac{\beta \eta_\beta - \sigma \eta_\sigma}{\beta^2-\sigma^2}       . 
\end{equation*}
Using the equality of the mixed partial derivatives $q_{\beta \sigma} = q_{\sigma \beta}$, we obtain the \emph{entropy equation} 
\begin{equation}\label{eq:ent eqn}
    \sigma(\beta^2-\sigma^2)^2 (\eta_{\sigma\sigma} - \eta_{\beta\beta}) = 0. 
\end{equation}
In turn, all solutions of the linear wave equation 
\begin{equation*}
    \eta_{\sigma\sigma} - \eta_{\beta \beta} = 0 
\end{equation*}
are entropies for our system. Recall that the general solution of the above is given by the usual d'Alembert representation formula: 
\begin{equation}\label{eq:eta general}
    \eta(\sigma,\beta) = f(\beta-\sigma) + f(\beta+\sigma) + \int_{\beta-\sigma}^{\beta+\sigma} g(s) \d s \quad (\sigma>0),
\end{equation}
where $f = \frac{1}{2}\eta|_{\sigma=0}$, $g = \frac{1}{2}\eta_\sigma|_{\sigma=0}$; by varying the choice of $f,g$, we generate \emph{two linearly independent families} of solutions. 

To compute the corresponding entropy-flux $q$, we have 
\begin{equation*}
    \begin{aligned} 
    &\eta_\sigma = f'(\beta+\sigma) - f'(\beta-\sigma) + g(\beta+\sigma) + g(\beta-\sigma), \\ 
    & \eta_\beta = f'(\beta+\sigma) + f'(\beta-\sigma) + g(\beta+\sigma) - g(\beta-\sigma), 
    \end{aligned}
\end{equation*}
and thus 
\begin{equation*}
   \begin{aligned} 
   q_\sigma &= \frac{1}{\beta^2-\sigma^2}( \beta \eta_\sigma - \sigma \eta_\beta ) \\ 
   &= \frac{f'(\beta+\sigma)}{\beta+\sigma} - \frac{f'(\beta-\sigma)}{\beta-\sigma} + \frac{1}{\beta+\sigma}g(\beta+\sigma) + \frac{1}{\beta-\sigma}g(\beta-\sigma),
   \end{aligned}
\end{equation*}
while 
\begin{equation*}
    \begin{aligned}
        q_\beta &= \frac{1}{\beta^2-\sigma^2}(\beta \eta_\beta - \sigma \eta_\sigma) \\ 
        &= \frac{f'(\beta+\sigma)}{\beta+\sigma} + \frac{f'(\beta-\sigma)}{\beta-\sigma} + \frac{1}{\beta+\sigma}g(\beta+\sigma) - \frac{1}{\beta-\sigma}g(\beta-\sigma).
    \end{aligned}
\end{equation*}
It follows that 
\begin{equation}\label{eq:q general form}
    q(\sigma,\beta) = \int_{\beta}^{\beta+\sigma} \frac{f'(s)}{s} \d s -  \int_{\beta-\sigma}^{\beta} \frac{f'(s)}{s} \d s + 2 \int_{0}^\beta \frac{f'(s)}{s} \d s + \int_{\beta-\sigma}^{\beta+\sigma} \frac{g(s)}{s} \d s 
\end{equation}
is a suitable entropy-flux for $\eta$, provided these integrals are well-defined. Recalling Definition \ref{def:entropy kernels}, we have shown that a generic entropy pair may be written as claimed in \eqref{eq:generate my entropies}, \textit{i.e.} 
\begin{align}
 &\eta(\sigma,\beta) = \langle\chi_1(\sigma,\beta,\cdot),f\rangle + \langle\chi_2(\sigma,\beta,\cdot),g\rangle, \quad 
 &q(\sigma,\beta) = \langle\varsigma_1(\sigma,\beta,\cdot),f\rangle + \langle\varsigma_2(\sigma,\beta,\cdot),g\rangle. \label{eq:kernel exp ent pair}
\end{align}
We have the following result concerning the admissible test functions for the kernels, for which we recall for convenience the spaces of functions $\mathcal{A}$ and $\mathcal{B}$ introduced in \eqref{eq:admissible test functions}: 
\begin{align}
\mathcal{A} = \big\{ f \in C^2(\mathbb{R}): \, f'(0) = 0 \big\} \quad \text{and} \quad \mathcal{B} = \big\{ g \in C^1(\mathbb{R}): \, g(0) = 0 \big\}. \nonumber
    \end{align}

\begin{lemma}\label{lem:admissibility kernels}
    The kernels $\chi_1$ and $\chi_2$ are well-defined on $C^0(\mathbb{R})$, while $\varsigma_1$ is well-defined on $\mathcal{A}$ and $\varsigma_2$ is well-defined on 
    $\mathcal{B}$. 
\end{lemma}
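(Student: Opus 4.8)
The plan is to reduce the whole statement to a single, elementary integrability fact at $s=0$, handling the four kernels one at a time. For $\chi_1$, the pairing against $\varphi\in C^0(\mathbb{R})$ is the point evaluation $\langle\chi_1(\sigma,\beta,\cdot),\varphi\rangle=\varphi(\beta-\sigma)+\varphi(\beta+\sigma)$, which is plainly finite (and locally uniformly bounded in $(\sigma,\beta)$, giving continuity of the functional as well). For $\chi_2$, since $\sigma\geq 0$ the support of $\chi_2(\sigma,\beta,\cdot)$ is the bounded interval $[\beta-\sigma,\beta+\sigma]$, so $\langle\chi_2(\sigma,\beta,\cdot),\varphi\rangle=\int_{\beta-\sigma}^{\beta+\sigma}\varphi(s)\,\der s$ is a finite Riemann integral for every $\varphi\in C^0(\mathbb{R})$ (in fact for every $\varphi\in L^1_{\loc}$). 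No structural hypothesis on $\varphi$ enters in either case.

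For $\varsigma_2$ acting on $g\in\mathcal{B}$, the pairing is $\langle\varsigma_2(\sigma,\beta,\cdot),g\rangle=\int_{\beta-\sigma}^{\beta+\sigma}\frac{g(s)}{s}\,\der s$, and the only possible obstruction is the weight $1/s$ when $0$ lies in the (compact) interval of integration. Here I would invoke the defining property $g(0)=0$: for $g\in C^1$ one has $g(s)=\int_0^s g'(\tau)\,\der\tau$, so $g(s)/s$ is the average of the continuous function $g'$ over $[0,s]$; hence it extends continuously across $s=0$ with value $g'(0)$, is in particular bounded near the origin, and therefore bounded on all of $[\beta-\sigma,\beta+\sigma]$, so the integral converges. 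For $\varsigma_1$ acting on $f\in\mathcal{A}$ I would use the explicit form recorded in \eqref{eq:q general form}, namely
\begin{equation*}
\langle\varsigma_1(\sigma,\beta,\cdot),f\rangle=\int_{\beta-\sigma}^{\beta+\sigma}\sgn(s-\beta)\,\frac{f'(s)}{s}\,\der s+2\int_{0}^{\beta}\frac{f'(s)}{s}\,\der s,
\end{equation*}
and observe that, since $f\in C^2$ with $f'(0)=0$, the function $g:=f'$ belongs to $\mathcal{B}$; the previous argument then shows $f'(s)/s$ extends continuously across $s=0$ (with value $f''(0)$). The extra factor $\sgn(s-\beta)$ is bounded with at most one jump and does not affect integrability, so both integrals—and hence their sum—are finite.

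The computation is short; the one genuinely delicate point, and the reason the lemma singles out $\mathcal{A}$ and $\mathcal{B}$ rather than $C^0$, is that mere continuity together with vanishing at $s=0$ does \emph{not} control the $1/s$ weight (for instance $g(s)=1/\log|s|$ is continuous with $g(0)=0$ but $g(s)/s\notin L^1$ near $0$): one really needs a one-sided differentiability/Dini-type condition, of which $C^1$ (resp. $C^2$) is the clean representative. I would also remark in passing that when $0\notin[\beta-\sigma,\beta+\sigma]$—i.e. outside the admissible region $\sigma>|\beta|$—all four pairings are trivially well-defined with no vanishing hypothesis at all, and that the symbol $\tfrac{\der}{\der s}$ in \eqref{eq:kernel 1} is to be read as instructing one to pair the weight against $f'$, consistently with \eqref{eq:q general form}.
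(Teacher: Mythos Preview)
Your proof is correct. The core idea---that for $g\in C^1$ with $g(0)=0$ the quotient $g(s)/s$ extends continuously across $s=0$ with value $g'(0)$, hence is bounded on any compact interval---is the right one, and your reduction of the $\varsigma_1$ case to the $\varsigma_2$ case via $f'\in\mathcal{B}$ is clean.

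The paper's own proof reaches the same conclusion by a slightly more laborious route: it splits into cases according to the signs of $\beta\pm\sigma$, and in the case where $0$ lies strictly inside $[\beta-\sigma,\beta+\sigma]$ it decomposes the integral into a symmetric principal-value piece $\int_{-(\sigma-\beta)}^{\sigma-\beta}\frac{g(s)}{s}\,\der s=\int_0^{\sigma-\beta}\frac{g(s)-g(-s)}{s}\,\der s$ plus a remainder away from $0$, then invokes Taylor's theorem on the difference quotient. Your argument bypasses this decomposition entirely by observing once and for all that the integrand has a removable singularity, which is both shorter and conceptually clearer. The paper's case analysis does have the minor side benefit of making explicit where the hypothesis $g(0)=0$ is actually needed (only when $0\in[\beta-\sigma,\beta+\sigma]$), but you note this yourself in your closing remark.
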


\begin{proof}
Note that $\chi_1$ and $\chi_2$ are well-defined on elements of $C^0(\mathbb{R})$. On the other hand, for all $g \in C^1(\mathbb{R})$, if $\beta+\sigma>0$ and $\beta-\sigma=0$, 
\begin{equation*}
    |\langle \varsigma_2(\sigma,\beta,\cdot), g \rangle| = \bigg|\int^{\beta+\sigma}_{0} \frac{g(s)}{s} \d s \bigg| 
\end{equation*}
and the above is well-defined if and only if $g(0)=0$; this can be seen from an application of Taylor's Theorem. If $\beta+\sigma>0$ and $\beta-\sigma<0$, then one may rewrite the duality product as a principal value plus a remainder term: 
\begin{equation*}
    \begin{aligned}
        |\langle \varsigma_2(\sigma,\beta,\cdot), g \rangle| &\leq \bigg|\int^{\beta+\sigma}_{\sigma-\beta} \frac{g(s)}{s} \d s \bigg| + \bigg|\int^{\sigma-\beta}_{-(\sigma-\beta)} \frac{g(s)}{s} \d s \bigg| \\ 
        &\leq \min\{\beta+\sigma,\sigma-\beta\}^{-1} \Vert g \Vert_{L^\infty([\sigma-\beta,\beta+\sigma])} + \int^{\sigma-\beta}_{0} \frac{|g(s)-g(-s)|}{s} \d s, 
    \end{aligned}
\end{equation*}
and one estimates the above right-hand side using Taylor's Theorem. It is also easy to verify that the duality product is well-defined whenever both $\beta+\sigma > 0$ and $\beta-\sigma>0$. We deduce that $\varsigma_2$ is well-defined on elements $g \in C^1(\mathbb{R})$ such that $g(0) = 0$. An analogous argument shows that $\varsigma_1$ is well-defined on elements $f \in C^2(\mathbb{R})$ such that $f'(0)=0$. 
\end{proof}

We therefore have the following lemma.
\begin{lemma}[Entropy pairs]\label{lem:ent pairs}
    Let $(f,g)$ be any pair of functions in $\mathcal{A} \times \mathcal{B}$. Then the pairs of functions $(\eta_1,q_1)$ and $(\eta_2,q_2)$ defined by 
    \begin{align}
    &\eta_1(\sigma,\beta) = f(\beta-\sigma) + f(\beta+\sigma) , \qquad q_1(\sigma,\beta) = \int_{\beta-\sigma}^{\beta+\sigma} \sgn(s-\beta) \frac{f'(s)}{s} \d s + 2 \int_{0}^\beta \frac{f'(s)}{s} \d s, \nonumber
    \end{align}
    and 
  \begin{align}
    &\eta_2(\sigma,\beta) = \int_{\beta-\sigma}^{\beta+\sigma} g(s) \d s, \qquad q_2(\sigma,\beta) = \int_{\beta-\sigma}^{\beta+\sigma} \frac{g(s)}{s} \d s, \nonumber
    \end{align}
    are entropy pairs for the system \eqref{eq:consvn law carroll gamma 3}; moreover, in the region $\{\beta \neq \pm \sigma\}$, they are $C^2$. In particular, the choice $(f(s),g(s)) = (\frac{1}{4}s^2,0) \in \mathcal{A} \times \mathcal{B}$ yields the special entropy pair $(\eta^*,q^*) = (\frac{1}{2}(\sigma^2 + \beta^2), \beta)$ of \eqref{eq:special entropy}.
\end{lemma}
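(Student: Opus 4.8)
The plan is to leverage the explicit d'Alembert structure of the entropy equation \eqref{eq:ent eqn} obtained in \S\ref{sec:entropy pairs}. Since that equation is linear, it suffices to treat the ``$f$-family'' ($g\equiv 0$) and the ``$g$-family'' ($f\equiv 0$) separately and then superpose, the general pair being $(\eta_1+\eta_2,\,q_1+q_2)$. For each family I would verify three things: (i) $\eta_j$ is a genuine $C^2$ solution of the wave equation $\eta_{\sigma\sigma}=\eta_{\beta\beta}$, so that $\eta_j(\bu)_t+q_j(\bu)_x$ vanishes for smooth $\bu$ wherever $\nabla_\bu\bF$ is defined; (ii) $q_j$ is well-defined as stated, which is exactly the content of Lemma \ref{lem:admissibility kernels} since $q_j$ is $\langle\varsigma_j(\sigma,\beta,\cdot),f\rangle$ or $\langle\varsigma_j(\sigma,\beta,\cdot),g\rangle$ with $f\in\mathcal{A}$, $g\in\mathcal{B}$; and (iii) the pair $(\eta_j,q_j)$ satisfies the compatibility relations $q_\sigma=(\beta\eta_\sigma-\sigma\eta_\beta)/(\beta^2-\sigma^2)$ and $q_\beta=(\beta\eta_\beta-\sigma\eta_\sigma)/(\beta^2-\sigma^2)$ from \S\ref{sec:entropy pairs}, which is what exhibits $(\eta_j,q_j)$ as an entropy pair of \eqref{eq:consvn law carroll gamma 3}.

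The computational key is to rewrite the flux integrals in d'Alembert form. Setting $H(t)=\int_0^t \frac{f'(s)}{s}\d s$ and $\widetilde H(t)=\int_0^t \frac{g(s)}{s}\d s$ --- both well-defined and $C^1$ on $\mathbb{R}$, since $f'(0)=0$ and $g(0)=0$ make the integrands continuous at the origin by Taylor's theorem (as in the proof of Lemma \ref{lem:admissibility kernels}) --- a short manipulation of \eqref{eq:q general form} (splitting the $\sgn(s-\beta)$-weighted integral at $s=\beta$ and collecting primitives based at $0$) gives
\[
q_1(\sigma,\beta)=H(\beta+\sigma)+H(\beta-\sigma), \qquad q_2(\sigma,\beta)=\widetilde H(\beta+\sigma)-\widetilde H(\beta-\sigma),
\]
while \eqref{eq:eta general} reads $\eta_1=f(\beta+\sigma)+f(\beta-\sigma)$ and $\eta_2=G_2(\beta+\sigma)-G_2(\beta-\sigma)$ with $G_2(t)=\int_0^t g(s)\d s$. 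From these forms everything is read off: $\eta_1,\eta_2\in C^2(\mathbb{R}^2)$ (as $f\in C^2$ and $G_2\in C^2$ since $g\in C^1$) and manifestly solve $\eta_{\sigma\sigma}=\eta_{\beta\beta}$; and $q_1,q_2\in C^1(\mathbb{R}^2)$, with $C^2$ regularity wherever their arguments $\beta\pm\sigma$ avoid $0$ --- that is, precisely on $\{\beta\neq\pm\sigma\}$ --- because $H'(t)=f'(t)/t$ and $\widetilde H'(t)=g(t)/t$ are $C^1$ in $t$ away from $t=0$ when $f\in C^2$, $g\in C^1$. Finally, the compatibility relations in (iii) collapse, on substituting $a=\beta+\sigma$, $b=\beta-\sigma$ and using $\beta^2-\sigma^2=ab$, to the elementary identities $(bP\pm aQ)/(ab)=P/a\pm Q/b$ with $(P,Q)=(f'(a),f'(b))$ or $(g(a),g(b))$, i.e.\ exactly the computation of \S\ref{sec:entropy pairs} run backwards.

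It then remains only to record the special case: $(f,g)=(\tfrac14 s^2,0)\in\mathcal{A}\times\mathcal{B}$ gives $f'(s)/s\equiv\tfrac12$, hence $H(t)=\tfrac12 t$, so $\eta_1=\tfrac14(\beta+\sigma)^2+\tfrac14(\beta-\sigma)^2=\tfrac12(\sigma^2+\beta^2)$ and $q_1=\tfrac12(\beta+\sigma)+\tfrac12(\beta-\sigma)=\beta$, recovering $(\eta^*,q^*)$ of \eqref{eq:special entropy}. The one genuine subtlety in all of this is the regularity bookkeeping at $s=0$: the d'Alembert rewriting is what makes transparent that the sole obstruction to $C^2$ regularity of the fluxes lies on the lines $\beta=\pm\sigma$, rather than (as the integrands $f'/s$, $g/s$ might suggest) everywhere $0$ falls between $\beta-\sigma$ and $\beta+\sigma$; beyond that, one need only be mildly careful that the formal chain-rule and equality-of-mixed-partials steps of \S\ref{sec:entropy pairs} are legitimate at the $C^2$/$C^1$ regularity of $(f,g)$ rather than merely for smooth data.
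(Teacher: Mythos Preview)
Your proposal is correct and follows the same overall route as the paper: well-definedness via Lemma \ref{lem:admissibility kernels}, the entropy-pair property via the compatibility relations derived in \S\ref{sec:entropy pairs}, and direct verification of the special case $(\eta^*,q^*)$. The one presentational difference is that you rewrite the \emph{fluxes} in d'Alembert form, $q_1=H(\beta+\sigma)+H(\beta-\sigma)$ and $q_2=\widetilde H(\beta+\sigma)-\widetilde H(\beta-\sigma)$, which makes the $C^2$ regularity claim on $\{\beta\neq\pm\sigma\}$ immediate (the only obstruction being the differentiability of $H'(t)=f'(t)/t$ at $t=0$); the paper instead computes $\nabla_\bu q_j$ and $\nabla_\bu^2 q_j$ explicitly from the integral formulas. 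Your route is slightly cleaner for the purposes of this lemma, though the paper's explicit Hessians are reused verbatim later in \eqref{eq:first entropy dissipation explicit}--\eqref{eq:second entropy dissipation explicit} for the kinetic formulation, so its extra computation is not wasted.
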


\begin{proof}
Let $(f,g) \in \mathcal{A} \times \mathcal{B}$ as in the statement. Then by \Cref{lem:admissibility kernels} the pairs of functions $(\eta_j,q_j)$ ($j=1,2$) in the statement of \Cref{lem:ent pairs} are well-defined. Furthermore, they are entropy pairs in view of \eqref{eq:kernel exp ent pair} and the computations preceding \eqref{eq:kernel exp ent pair}.
It remains to verify their regularity. We compute the first derivatives 
\begin{equation*}
   \begin{aligned} &\nabla_\bu \eta_1 = f'(\beta-\sigma) \left( \begin{matrix}-1 \\ 1 \end{matrix} \right) + f'(\beta+\sigma) \left( \begin{matrix}
        1 \\ 1
    \end{matrix} \right) , \quad && \nabla_\bu q_1 = \frac{f'(\beta-\sigma)}{\beta-\sigma} \left( \begin{matrix}-1 \\ 1 \end{matrix} \right) + \frac{f'(\beta+\sigma)}{\beta+\sigma} \left( \begin{matrix}
        1 \\ 1
    \end{matrix} \right), \\ 
    &\nabla_\bu \eta_2 = g(\beta-\sigma) \left( \begin{matrix} 1 \\ -1 \end{matrix} \right) + g(\beta+\sigma) \left( \begin{matrix}
        1 \\ 1
    \end{matrix} \right) , \quad && \nabla_\bu q_2 = \frac{g(\beta-\sigma)}{\beta-\sigma} \left( \begin{matrix} 1 \\ -1 \end{matrix} \right) + \frac{g(\beta+\sigma)}{\beta+\sigma} \left( \begin{matrix}
        1 \\ 1
    \end{matrix} \right), 
    \end{aligned}
\end{equation*}
along with the second derivatives 
\begin{equation*}
   \begin{aligned}
       &\nabla_\bu^2 \eta_1 = f''(\beta-\sigma) \left( \begin{matrix}
        1 & -1 \\ -1 & 1
    \end{matrix} \right) + f''(\beta+\sigma) \left( \begin{matrix}
        1 & 1 \\ 1 & 1
    \end{matrix} \right), \quad \\ 
    &\nabla^2_\bu q_1 \!\! = \! \frac{f''(\beta-\sigma)}{\beta-\sigma}\!\! \left( \begin{matrix}
        1 & -1 \\ 
        -1 & 1
    \end{matrix} \right) \!\! + \! \frac{f''(\beta+\sigma)}{\beta+\sigma} \!\!\left( \begin{matrix}
        1 & 1 \\ 
       1  & 1
    \end{matrix}  \right) \!\!+\! \frac{f'(\beta-\sigma)}{(\beta-\sigma)^2}\!\!\left( \begin{matrix}
        -1 & 1 \\ 
       1  & -1
    \end{matrix}  \right) \!\! +\! \frac{f'(\beta+\sigma)}{(\beta+\sigma)^2} \!\!\left( \begin{matrix}
        -1 & -1 \\ 
       -1  & -1
    \end{matrix}  \right), 
   \end{aligned}
\end{equation*}
and 
\begin{equation*}
    \begin{aligned}
        &\nabla^2_\bu \eta_2 = g'(\beta-\sigma) \left( \begin{matrix}
            -1 & 1 \\ 
            1 & -1
        \end{matrix}\right) + g'(\beta+\sigma) \left( \begin{matrix}
            1 & 1 \\ 
            1 & 1
        \end{matrix}\right), \\ 
        &\nabla^2_\bu q_2 \!\! = \! \frac{g'(\beta-\sigma)}{\beta-\sigma}\!\! \left( \begin{matrix}
        -1 & 1 \\ 
        1 & -1
    \end{matrix} \right) \!\! + \! \frac{g'(\beta+\sigma)}{\beta+\sigma} \!\!\left( \begin{matrix}
        1 & 1 \\ 
       1  & 1
    \end{matrix}  \right) \!\!+\! \frac{g(\beta-\sigma)}{(\beta-\sigma)^2}\!\!\left( \begin{matrix}
        1 & -1 \\ 
      -1  & 1
    \end{matrix}  \right) \!\! +\! \frac{g(\beta+\sigma)}{(\beta+\sigma)^2} \!\!\left( \begin{matrix}
        -1 & -1 \\ 
      -1  & -1
    \end{matrix}  \right). 
    \end{aligned}
\end{equation*}
It is immediate from the formulas above that $(\eta_j,q_j)$ ($j=1,2$) are $C^2$ in the region $\{\beta \neq \pm \sigma\}$. 
\end{proof}

\subsection{Compensated Compactness Framework}\label{sec:comp comp framework}

In this subsection, we establish our compactness framework; this will be used in \S \ref{sec:proofs of global exis and weak conti} in order to pass to the limit in the regularised problem which we study in \S \ref{sec:unif est}, thus obtaining an entropy solution of \eqref{eq:carroll_eqns_i}. 

\begin{prop}[Compensated Compactness Framework]\label{lem:comp comp}
    Let $\{\bu^\varepsilon=(\sigma^\varepsilon,\beta^\varepsilon)\}_\varepsilon$  be a uniformly bounded sequence in $L^\infty(\mathbb{R}^2_+)$ such that$:$ 
    \begin{enumerate}
        \item[(i)] there exists a positive constant $c>0$ such that $\sigma^\varepsilon- |\beta^\varepsilon| \geq c,$ and 
        \item[(ii)] for all entropy pairs $(\eta,q)$ generated via Lemma \ref{lem:ent pairs}, the sequence of entropy dissipation measures 
        $$\big\{\eta(\bu^\varepsilon)_t + q(\bu^\varepsilon)_x \big\}_\varepsilon$$ 
        is pre-compact in $H^{-1}_\loc(\mathbb{R}^2_+),$ \textit{i.e.} for all compact $K \subset \mathbb{R}^2_+$ there exists a compact $\kappa_K \subset H^{-1}(K)$ such that $$\big\{\eta(\bu^\varepsilon)_t + q(\bu^\varepsilon)_x \big\}_\varepsilon \subset \kappa_K.$$
    \end{enumerate}
    Then there exists a subsequence, which we do not relabel, and $\bu \in L^\infty(\mathbb{R}^2_+)$ such that there holds 
    \begin{equation*}
        \bu^\varepsilon \to \bu \quad \text{a.e.~and strongly in } L^p_\loc(\mathbb{R}^2_+) \text{ for all }\, p \in [1,\infty). 
    \end{equation*}
\end{prop}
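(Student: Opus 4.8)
The plan is to run the Tartar--Murat--DiPerna compensated compactness program: extract a Young measure from the uniformly bounded sequence, use the div-curl lemma to obtain Tartar's commutation relation for the entropy pairs of Lemma~\ref{lem:ent pairs}, then exploit the fact that for $\gamma=3$ the system \emph{decouples} in Riemann coordinates to reduce the Young measure to a Dirac mass, and finally deduce a.e. and $L^p_{\mathrm{loc}}$ convergence by routine arguments. By the fundamental theorem of Young measures, up to a subsequence there is a weak-$*$ measurable family $\{\nu_{t,x}\}$ of probability measures with $h(\bu^\varepsilon)\overset{*}{\rightharpoonup}\overline h=\langle\nu_{t,x},h\rangle$ in $L^\infty(\mathbb{R}^2_+)$ for every $h\in C^0$. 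Hypothesis (i) and the uniform $L^\infty$ bound force $\sigma^\varepsilon\geq c$ and $|\beta^\varepsilon|<\sigma^\varepsilon$, so $\mathrm{supp}\,\nu_{t,x}$ lies, for a.e. $(t,x)$, in a fixed compact subset of $\mathcal H\cap\{\sigma>0\}$; in the Riemann coordinates $(w_1,w_2)=(\sigma+\beta,\sigma-\beta)$ of Lemma~\ref{lem:riemann invariants} the support lies in a fixed compact rectangle $[a,b]^2\subset(0,\infty)^2$. It suffices to prove $\nu_{t,x}=\delta_{\bu(t,x)}$ for a.e. $(t,x)$, for some $\bu\in L^\infty(\mathbb{R}^2_+)$.

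For any two entropy pairs $(\eta_i,q_i)$, $i=1,2$, furnished by Lemma~\ref{lem:ent pairs}, the functions $\eta_i(\bu^\varepsilon),q_i(\bu^\varepsilon)$ are uniformly bounded (being continuous on the compact support region), while hypothesis (ii) gives precompactness of $\{\partial_t\eta_i(\bu^\varepsilon)+\partial_x q_i(\bu^\varepsilon)\}_\varepsilon$ in $H^{-1}_{\mathrm{loc}}(\mathbb{R}^2_+)$. Applying the div-curl lemma of Murat and Tartar to the vector fields $(\eta_1(\bu^\varepsilon),q_1(\bu^\varepsilon))$ and $(q_2(\bu^\varepsilon),-\eta_2(\bu^\varepsilon))$ yields, for a.e. $(t,x)$, Tartar's commutation relation
\[ \overline{\eta_1q_2-\eta_2q_1}=\overline{\eta_1}\,\overline{q_2}-\overline{\eta_2}\,\overline{q_1}. \]

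The crux is the reduction of $\nu_{t,x}$, and here the special structure of the $\gamma=3$ system is decisive: reading off the characteristic speeds from \eqref{eq:diagonalised jacobian F}, $\lambda_2=1/(\beta+\sigma)=1/w_1$ and $\lambda_1=1/(\beta-\sigma)=-1/w_2$, so each depends on a single Riemann invariant, and the system splits into the two decoupled scalar conservation laws $\partial_t w_1+\partial_x(\log w_1)=0$ and $\partial_t w_2+\partial_x(-\log w_2)=0$, each with strictly convex flux on $(0,\infty)$. Correspondingly, choosing in Lemma~\ref{lem:ent pairs} the data $(f,g)=(\tfrac12 A,\tfrac12 A')$ one computes that $(\eta,q)=(A(w_1),Q(w_1))$ with $Q'(w_1)=A'(w_1)/w_1$, i.e. exactly the entropy pairs of the scalar $w_1$-law; the admissibility constraint $A'(0)=0$ is immaterial, since $w_1\geq a>0$ on $\mathrm{supp}\,\nu_{t,x}$ and $A$ may be modified freely near $0$. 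Feeding two such pairs into the commutation relation above gives precisely Tartar's scalar commutation identity for the marginal $\mu^1_{t,x}:=(w_1)_\#\nu_{t,x}$, whence, by the classical scalar reduction argument (strict convexity of $\log$), $\mu^1_{t,x}=\delta_{\overline{w_1}(t,x)}$ for a.e. $(t,x)$. The symmetric choice of data produces entropy pairs $(B(w_2),\widetilde Q(w_2))$ with $\widetilde Q'(w_2)=-B'(w_2)/w_2$, the entropy pairs of the scalar $w_2$-law, and the same argument gives $\mu^2_{t,x}:=(w_2)_\#\nu_{t,x}=\delta_{\overline{w_2}(t,x)}$ a.e. Since a probability measure on $\mathbb{R}^2$ whose two coordinate marginals are Dirac masses is itself a Dirac mass, $\nu_{t,x}=\delta_{(\overline{w_1},\overline{w_2})(t,x)}$; transforming back via the bijection of Lemma~\ref{lem:riemann invariants}, $\nu_{t,x}=\delta_{\bu(t,x)}$ with $\bu\in L^\infty(\mathbb{R}^2_+)$ the weak-$*$ limit of $\bu^\varepsilon$.

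With $\nu_{t,x}$ a.e. a Dirac mass, $\overline{|\bu^\varepsilon-\bu|^2}\to\langle\nu_{t,x},|\cdot-\bu(t,x)|^2\rangle=0$ in $L^1_{\mathrm{loc}}(\mathbb{R}^2_+)$, so $\bu^\varepsilon\to\bu$ in $L^2_{\mathrm{loc}}(\mathbb{R}^2_+)$ and, along a further subsequence, a.e.; the uniform $L^\infty$ bound then upgrades this to convergence in $L^p_{\mathrm{loc}}$ for every $p\in[1,\infty)$ by dominated convergence. The main obstacle is the reduction step: what makes it tractable is the decoupling of the system in Riemann coordinates, which collapses the system-level commutation relation onto two scalar Tartar identities, each closed by strict convexity of the associated scalar flux (a restatement of genuine nonlinearity, Lemma~\ref{lem:genuinely nonlinear}). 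One should still check that the remaining, ``mixed'' entropy pairs impose no incompatible constraints — they do not — but establishing the reduction needs only the single-invariant families above.
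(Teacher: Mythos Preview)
Your proof is correct and shares the same opening moves as the paper's: extract a Young measure, apply the div--curl lemma to obtain Tartar's commutation relation, then reduce $\nu_{t,x}$ to a Dirac mass. The difference lies entirely in the reduction step. The paper simply observes that the system is strictly hyperbolic and genuinely nonlinear on the support of $\nu_{t,x}$ (by assumption (i) and Lemmas~\ref{lem:strictly hyp}--\ref{lem:genuinely nonlinear}) and that Lemma~\ref{lem:ent pairs} supplies two linearly independent families of entropy pairs, and then invokes the classical DiPerna/Serre reduction for genuinely nonlinear $2\times2$ systems as a black box. You instead exploit the special feature of the $\gamma=3$ system that each characteristic speed depends on a \emph{single} Riemann invariant ($\lambda_2=1/w_1$, $\lambda_1=-1/w_2$), so that judicious combinations $(f,g)=(\tfrac12 A,\tfrac12 A')$ in Lemma~\ref{lem:ent pairs} produce entropy pairs $(A(w_1),Q(w_1))$ depending only on $w_1$; this collapses the system-level commutation relation to Tartar's scalar identity for the $w_1$-marginal of $\nu_{t,x}$, closed by strict concavity of $\log$, and symmetrically for $w_2$. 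Your route is more explicit and more elementary for this particular system, making the reduction fully self-contained rather than relying on the general $2\times2$ machinery; the paper's route is shorter to write and would survive perturbations of the system that destroy the exact decoupling. One small point: your remark that ``$A$ may be modified freely near $0$'' deserves a word of care, since $Q(w_1)=\int_0^{w_1}A'(s)/s\,\der s$ does see the extension of $A$ near $0$; the point is that different admissible extensions change $Q$ only by an additive constant on $[a,b]$, which is harmless in the commutation relation.
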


As per the notations introduced in \S \ref{sec:prelim}, we remind the reader that in what follows, for a measure $\nu_{t,x}$ on $\mathbb{R}^2$ indexed by $(t,x) \in \mathbb{R}^2_+$, we employ the notation 
\begin{equation*}
    \overline{h} = \int_{\mathbb{R}^2} h \d \nu_{t,x} \, ,
\end{equation*}
where no confusion arises regarding the subscript $t,x$.

\begin{proof}
  The uniform boundedness in $L^\infty$ implies, using Alaoglu's Theorem, the existence of a weakly-* convergent subsequence, still labelled $\{\bu^\varepsilon\}_\varepsilon$, such that $\bu^\varepsilon \overset{*}{\rightharpoonup} \bu$ in $L^\infty(\mathbb{R}^2_+)^2$. The remainder of this proof is focused on improving this convergence to convergence almost everywhere.

   \smallskip 
   \noindent 1. \textit{Young measure and commutation relation}. The Fundamental Theorem of Young measures (\textit{cf.}~\cite[\S 2, Theorem 9.1.5]{Serre2}) implies that the sequence $\{\bu^\varepsilon\}_\varepsilon$ generates a family $\{\nu_{t,x}\}_{t,x}$ of probability measures on $\mathbb{R}^2$, for which, for a further subsequence which we do not relabel, there holds 
\begin{equation*}
    h(\bu^\varepsilon) \overset{*}{\rightharpoonup} \overline{h} \quad \text{in } L^\infty(\mathbb{R}^2_+) 
\end{equation*}
for all $h \in C^0(\mathbb{R}^2)$. In turn, using the continuity of the entropy pairs generated by Lemma \ref{lem:ent pairs}, we find that 
 \begin{equation*}
     \eta_i(\bu^\varepsilon) q_j(\bu^\varepsilon) \overset{*}{\rightharpoonup} \overline{\eta_i q_j} \qquad (i \neq j), 
 \end{equation*}
 for all entropy pairs generated by Lemma \ref{lem:ent pairs}. Meanwhile, using the $H^{-1}_\loc$-compactness criterion (assumption (ii) of the lemma), the Div-Curl Lemma implies that
 \begin{equation*}
     \eta_1(\bu^\varepsilon) q_2(\bu^\varepsilon) - \eta_2(\bu^\varepsilon) q_1(\bu^\varepsilon) \to \overline{\eta_1} \, \overline{q_2} - \overline{\eta_2} \, \overline{q_1} \qquad \text{in } \mathcal{D}(\mathbb{R}^2_+). 
 \end{equation*}
We deduce that there holds the commutation relation 
    \begin{equation}\label{eq:commutation relation}
        \overline{\eta_1 q_2 - \eta_2 q_1} = \overline{\eta_1} \, \overline{q_2} - \overline{\eta_2} \, \overline{q_1}
    \end{equation}
    for all entropy pairs $(\eta_1,q_1)$ and $(\eta_2,q_2)$ generated by Lemma \ref{lem:ent pairs}.

    \smallskip 
    \noindent 2. \textit{Young measure reduction}. The system \eqref{eq:consvn law carroll gamma 3} is strictly hyperbolic and genuinely nonlinear by virtue of assumption (i) and Lemmas \ref{lem:strictly hyp} and \ref{lem:genuinely nonlinear}. Furthermore, Lemma \ref{lem:ent pairs} provides us with two linearly independent families of entropy pairs, which are pre-compact in $H^{-1}_\loc$ by assumption (ii) of the proposition. In turn, using \eqref{eq:commutation relation} and the classical arguments of Serre \cite[\S 2]{Serre2} or DiPerna \cite{DiPerna83a}, we deduce that there exists a measurable function $\bu \in L^\infty(\mathbb{R}^2_+)^2$ such that 
    \begin{equation*}
        \nu_{t,x} = \delta_{\bu(t,x)}. 
    \end{equation*}
    It follows that $\bu^\varepsilon \to \bu$ almost everywhere. Using the Dominated Convergence Theorem and the uniform boundedness, we deduce that $\bu^\varepsilon \to \bu$ strongly in $L^p_\loc(\mathbb{R}^2_+)^2$ for all $p \in [1,\infty)$. 
\end{proof}

\section{Regularised Problem and Uniform Estimates}\label{sec:unif est}

We turn now to establishing the existence and uniform estimates for the vanishing viscosity approximation of our system \eqref{eq:conservative form intro}. For $\varepsilon>0$ we consider the initial value problem 
\begin{equation}\label{eq:regularised pb}
    \left\lbrace\begin{aligned}
        &\bu^\varepsilon_t + \bF(\bu^\varepsilon)_x = \varepsilon  \bu^\varepsilon_{xx}, \\ 
        &\bu^\varepsilon|_{t=0} = \bu_0. 
    \end{aligned}\right.
\end{equation}
Ultimately, our objective will be to take the limit as $\varepsilon \to 0$ and show that the sequence $\{\bu^\varepsilon\}_\varepsilon$ converges almost everywhere to $\bu$ an entropy solution of \eqref{eq:conservative form intro}. We emphasise that, in view of the singularities contained in the term $\nabla_{\bu}\bF$, \textit{cf.}~\eqref{eq:what it needs to be}, the existence of solutions to \eqref{eq:regularised pb} cannot be deduced by straightforward application of the general theory for parabolic systems \cite{ladyzhenskaia1988linear}. 

\smallskip 

Our main result in this section is as follows, where we recall that the positive constant $c_0$ was fixed in the statement of Theorem \ref{thm:main_i}.

\begin{prop}[Existence for Regularised Problems]\label{prop:approx pbms}
    Let $\varepsilon>0$, $T>0$, and $\bu_0 \in L^2(\mathbb{R}) \cap L^\infty(\mathbb{R})$ satisfy 
    \begin{equation*}
        \essinf_\mathbb{R} w_1(\bu_0) , ~  \essinf_\mathbb{R} w_2(\bu_0) \geq c_0. 
    \end{equation*}
    Then there exists $\bu^\varepsilon \in L^\infty(\mathbb{R}^2_+) \cap L^2_{\loc}(0,T;H^1(\mathbb{R}))$ with $\bu^\varepsilon_t \in L^2_{\loc}(0,T;H^{-1}(\mathbb{R}))$, smooth away from the initial time, a classical solution of the regularised problem \eqref{eq:regularised pb} on $(0,T)\times\mathbb{R}$, \textit{i.e.} 
    \begin{equation*}
    \left\lbrace\begin{aligned}
        &\bu^\varepsilon_t + \bF(\bu^\varepsilon)_x = \varepsilon  \bu^\varepsilon_{xx}, \\ 
        &\bu^\varepsilon|_{t=0} = \bu_0. 
    \end{aligned}\right.
\end{equation*}
Furthermore, $\bu^\varepsilon$ satisfies 
\begin{equation*}
        w_1(\bu^\varepsilon) , ~ w_2(\bu^\varepsilon) \geq c_0 \quad \text{a.e.~in }\mathbb{R}^2_+, 
    \end{equation*}
    and there exists a positive constant $C$ independent of $\varepsilon$ and $T$ such that 
    \begin{equation*}
        \Vert \bu^\varepsilon \Vert_{L^\infty(\mathbb{R}^2_+)} \leq C. 
    \end{equation*}
\end{prop}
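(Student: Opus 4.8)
The plan is to desingularise the flux, solve the resulting uniformly parabolic system by standard methods, and then trap the solution inside a compact, convex rectangle lying strictly inside the good region $\mathcal{H}$ — on which the desingularisation is never felt, so that the constructed function in fact solves \eqref{eq:regularised pb}. Put $M \defeq \max\{\|w_1(\bu_0)\|_{L^\infty(\mathbb{R})},\|w_2(\bu_0)\|_{L^\infty(\mathbb{R})}\}$ and let
\[
\Sigma \defeq \big\{(\sigma,\beta)\in\mathbb{R}^2:\ c_0\leq w_1(\sigma,\beta)\leq M,\ \ c_0\leq w_2(\sigma,\beta)\leq M\big\},
\]
the image of the square $[c_0,M]^2$ under the linear bijection $(w_1,w_2)\mapsto(\sigma,\beta)=\big(\tfrac12(w_1+w_2),\tfrac12(w_1-w_2)\big)$ of \Cref{lem:riemann invariants}. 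Then $\Sigma$ is compact and convex, $\bu_0(x)\in\Sigma$ for a.e.\ $x$ by hypothesis, and $\Sigma\subset\mathcal{H}$: on $\Sigma$ one has $\sigma=\tfrac12(w_1+w_2)\geq c_0>0$, while $\beta=\sigma$ would force $w_2=0<c_0$ and $\beta=-\sigma$ would force $w_1=0<c_0$. Fix an open set $U$ with $\Sigma\subset U\Subset\mathcal{H}$ and, using that $\bF\in C^\infty(\mathcal{H})$ (\S\ref{sec:conservative form}), fix a globally smooth flux $\tilde\bF\colon\mathbb{R}^2\to\mathbb{R}^2$ with bounded derivatives such that $\tilde\bF\equiv\bF$ on $U$.

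\emph{Step 1 (local existence and basic estimates for the desingularised problem).} Since $\tilde\bF$ is smooth with bounded derivatives, Duhamel's formula for the heat semigroup $e^{\varepsilon t\partial_{xx}}$ together with a contraction argument in $C([0,\tau];L^2(\mathbb{R})\cap L^\infty(\mathbb{R}))$ yields, for some $\tau>0$ depending on $\varepsilon$ and $\|\bu_0\|_{L^2\cap L^\infty}$, a mild solution of $\bu^\varepsilon_t+\tilde\bF(\bu^\varepsilon)_x=\varepsilon\bu^\varepsilon_{xx}$ with $\bu^\varepsilon|_{t=0}=\bu_0$; parabolic smoothing makes it classical and smooth for $t>0$, and it continues as long as $\|\bu^\varepsilon(t)\|_{L^2(\mathbb{R})\cap L^\infty(\mathbb{R})}$ stays finite. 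A standard energy estimate using the convex entropy $\eta^*$ of \eqref{eq:special entropy} (note $\nabla_\bu^2\eta^*=\mathrm{Id}$) gives $\|\bu^\varepsilon\|_{L^\infty_\loc(0,T;L^2(\mathbb{R}))}$ and $\varepsilon^{1/2}\|\bu^\varepsilon_x\|_{L^2((0,T)\times\mathbb{R})}$ bounded in terms of $\|\bu_0\|_{L^2(\mathbb{R})}$ alone.

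\emph{Step 2 (invariant rectangle, and conclusion).} This is the heart of the argument, and it is where the structure of the Riemann invariants is decisive. Since $w_1$ and $w_2$ are \emph{linear}, with $\nabla_\bu w_1=\sqrt2\,\br_2$ and $\nabla_\bu w_2=\sqrt2\,\br_1$ (\S\ref{sec:riemann invariants}), and since for $\gamma=3$ the Jacobian $\nabla_\bu\bF=M^{-1}$ is symmetric with the $\br_j$ as eigenvectors, the constant row vectors $\nabla_\bu w_i^\intercal$ are left eigenvectors of $\nabla_\bu\bF$ throughout $\mathcal{H}$: $\nabla_\bu w_1^\intercal\nabla_\bu\bF=\lambda_2\,\nabla_\bu w_1^\intercal$ and $\nabla_\bu w_2^\intercal\nabla_\bu\bF=\lambda_1\,\nabla_\bu w_2^\intercal$. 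Hence, wherever $\bu^\varepsilon$ takes values in $U$ (so that $\tilde\bF=\bF$ there and the chain rule is licit), the scalars $w_i(\bu^\varepsilon)$ obey the linear drift--diffusion equations
\begin{align*}
\partial_t\,w_1(\bu^\varepsilon)+\lambda_2(\bu^\varepsilon)\,\partial_x\,w_1(\bu^\varepsilon)&=\varepsilon\,\partial_{xx}\,w_1(\bu^\varepsilon),\\
\partial_t\,w_2(\bu^\varepsilon)+\lambda_1(\bu^\varepsilon)\,\partial_x\,w_2(\bu^\varepsilon)&=\varepsilon\,\partial_{xx}\,w_2(\bu^\varepsilon),
\end{align*}
with bounded continuous coefficients, since $\overline U\Subset\mathcal{H}$ keeps $\lambda_1=-1/w_2$ and $\lambda_2=1/w_1$ away from their singularities. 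Let $T_*\in(0,\infty]$ be the largest time up to which $\bu^\varepsilon(s,\cdot)$ is valued in $U$. On $[0,T_*)$ the parabolic maximum principle for these two scalar equations, with data $w_i(\bu_0)\in[c_0,M]$ a.e., forces $c_0\leq w_i(\bu^\varepsilon(t,x))\leq M$, i.e.\ $\bu^\varepsilon(t,\cdot)$ valued in $\Sigma\Subset U$; were $T_*<\infty$, continuity would give $\bu^\varepsilon(T_*,\cdot)$ valued in $\overline\Sigma\Subset U$, contradicting maximality. Thus $T_*=\infty$ and $\bu^\varepsilon(t,x)\in\Sigma$ for all $(t,x)$, whence $w_1(\bu^\varepsilon),w_2(\bu^\varepsilon)\geq c_0$ and $\|\bu^\varepsilon\|_{L^\infty(\mathbb{R}^2_+)}\leq\sqrt2\,M\eqdef C$, a bound depending only on $\bu_0$. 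These bounds are uniform in time, so by the continuation criterion of Step 1 the solution is global and in particular exists on $(0,T)\times\mathbb{R}$; since $\bu^\varepsilon$ is valued in $\Sigma\subset U$ one has $\tilde\bF(\bu^\varepsilon)=\bF(\bu^\varepsilon)$, so $\bu^\varepsilon$ is a classical solution of \eqref{eq:regularised pb}, smooth away from $t=0$; membership in $L^2_\loc(0,T;H^1(\mathbb{R}))$ is the energy bound, and $\bu^\varepsilon_t\in L^2_\loc(0,T;H^{-1}(\mathbb{R}))$ then follows from the equation, $\nabla_\bu\bF(\bu^\varepsilon)$ being bounded on the compact set $\Sigma$.

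The one genuine difficulty is Step 2, where two requirements pull against each other: the singularities of $\bF$ and $\nabla_\bu\bF$ along $\{\beta=\pm\sigma\}$ rule out applying parabolic well-posedness to \eqref{eq:regularised pb} directly, forcing one to work with a desingularised flux, while the invariant-rectangle mechanism hinges on the \emph{exact} Riemann structure of the true flux, which a crude truncation would destroy. Localising the modification to a neighbourhood $U$ of the target rectangle $\Sigma$ — so that $\tilde\bF=\bF$ precisely where the solution is shown a posteriori to live — reconciles the two; carrying out the bootstrap that keeps the solution inside $U$ (the argument via $T_*$), verifying that the data lies in $\Sigma$, and checking the maximum-principle estimates on the whole line is the technical core, and amounts to the propagation of admissibility advertised in the introduction.
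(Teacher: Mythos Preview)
Your proposal is correct and follows the same overarching strategy as the paper---desingularise the flux, solve the resulting parabolic problem, trap the solution in the rectangle $\{c_0\leq w_1,w_2\leq M\}$ via the scalar drift--diffusion equations for the Riemann invariants, then observe that the desingularisation was never active---but the implementation differs in two genuine respects.

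First, the desingularisation: you replace $\bF$ by a generic smooth extension $\tilde\bF$ agreeing with $\bF$ on a neighbourhood $U\supset\Sigma$, whereas the paper replaces $\nabla_\bu\bF$ by the non-conservative matrix $\bM(\bu)$ obtained by cutting off the eigenvalues $\lambda_j$ at height $c_0$ (\textit{cf.}~\eqref{eq:modified flux}). The paper's choice preserves the Riemann-invariant structure \emph{globally}, so the autonomous scalar equations \eqref{eq:scalar autonomous} hold everywhere and the maximum principle is applied directly, without any continuation argument; your generic $\tilde\bF$ only respects the structure on $U$, which is why you need the bootstrap via $T_*$. Second, the existence step: because you keep the conservative form $\tilde\bF(\bu)_x$, you can move the $\partial_x$ onto the heat kernel and close a contraction in $C([0,\tau];L^2\cap L^\infty)$ using only the Lipschitz bound on $\tilde\bF$; the paper's quasilinear form $\bM(\bu)\bu_x$ does not permit this, and the authors instead prove existence in $X=\{\bv\in L^2(0,T;H^1):\bv_t\in L^2(0,T;H^{-1})\}$ via Schaefer's fixed point theorem, with a rather lengthy compactness verification (Lemmas~\ref{lem:fixed pt i}--\ref{lem:fixed pt ii}). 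In short, the paper front-loads the structural work into the cutoff so as to avoid the bootstrap, at the cost of a heavier existence argument; you take the opposite trade-off. One small imprecision: the $\eta^*$-energy estimate in your Step~1 does not give bounds ``in terms of $\|\bu_0\|_{L^2}$ alone'' for the modified problem (the entropy-flux relation $\nabla_\bu q^*=\nabla_\bu\eta^*\nabla_\bu\bF$ fails for $\tilde\bF$ outside $U$), but a crude Gr\"onwall estimate depending on $\varepsilon,T$ suffices for continuation, and the $\varepsilon$-uniform bounds are recovered a posteriori once the solution is known to live in $\Sigma$.
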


The previous proposition rests on the fact that the system of equations admits invariant regions in phase space; namely, for initial data satisfying $\essinf_\mathbb{R} w_1(\bu_0) , \, \essinf_\mathbb{R} w_2(\bu_0) \geq c_0$, there holds $w_1(\bu^\varepsilon) , \, w_2(\bu^\varepsilon) \geq c_0$ a.e., whence the term $\bF(\bu^\varepsilon)$ is well-defined. This can be seen from the following \emph{formal} argument: by multiplying \eqref{eq:regularised pb} with $\nabla_{\bu} w_1^\intercal$, we obtain 
\begin{equation*}
    \begin{aligned}
        & w_1(\bu^\varepsilon)_t + \nabla_{\bu} w_1^\intercal \nabla_\bu \bF(\bu^\varepsilon) \bu^\varepsilon_x = \varepsilon\nabla_{\bu} w_1^\intercal \bu^\varepsilon_{xx},
    \end{aligned}
\end{equation*}
so using that $\nabla_\bu \bF$ is symmetric and $\nabla_\bu w_1 = \sqrt{2}\br_2$, there holds $\nabla_{\bu} w_1^\intercal \nabla_\bu \bF(\bu^\varepsilon) = \lambda_2 \nabla_\bu w_1^\intercal$, whence 
\begin{equation*}
    \begin{aligned}
        & w_1(\bu^\varepsilon)_t + \lambda_2(\bu^\varepsilon) w_1(\bu^\varepsilon)_x = \varepsilon w_1(\bu^\varepsilon)_{xx} - \varepsilon(\bu^\varepsilon_x)^\intercal \nabla_\bu^2 w_1 (\bu^\varepsilon) \bu^\varepsilon_x; 
    \end{aligned}
\end{equation*}
similarly, we get 
\begin{equation*}
    \begin{aligned}
        & w_2(\bu^\varepsilon)_t + \lambda_1(\bu^\varepsilon) w_2(\bu^\varepsilon)_x = \varepsilon w_2(\bu^\varepsilon)_{xx} - \varepsilon(\bu^\varepsilon_x)^\intercal \nabla_\bu^2 w_2 (\bu^\varepsilon) \bu^\varepsilon_x. 
    \end{aligned}
\end{equation*}
Note that since $w_j$ ($j=1,2)$ are affine, their Hessians are null, so the previous equations reduce to 
\begin{equation}\label{eq:RI heat eqns}
   \left\lbrace \begin{aligned}
        & w_1(\bu^\varepsilon)_t + (\log w_1(\bu^\varepsilon))_x = \varepsilon w_1(\bu^\varepsilon)_{xx}, \\ 
        & w_2(\bu^\varepsilon)_t - (\log w_2(\bu^\varepsilon))_x = \varepsilon w_2(\bu^\varepsilon)_{xx}, 
    \end{aligned}\right. 
\end{equation}
where we used that $$\lambda_2 = \frac{1}{w_1} \quad \text{and} \quad \lambda_1 = -\frac{1}{w_2}.$$ Each of the equations in \eqref{eq:RI heat eqns} is an autonomous strongly parabolic semilinear equation. An application of the Maximum Principle thereby ensures $$ c_0 \leq \essinf_\mathbb{R} w_j(\bu_0) \leq w_j(\bu^\varepsilon) \leq \esssup_\mathbb{R} w_j(\bu_0) \quad \text{a.e.} \quad (j=1,2).$$

The arguments presented above can only be made rigorous \emph{a posteriori}, \textit{i.e.}~one can only perform these manipulations once the existence of $\bu^\varepsilon$ is established. For this reason, we shall study the \emph{modified regularised problem}: 
\begin{equation}\label{eq:modified regularised problem}
   \left\lbrace \begin{aligned} &\bu^\varepsilon_t + \bM(\bu^\varepsilon) \bu^\varepsilon_x = \varepsilon \bu^\varepsilon_{xx}, \\ 
   &\bu^\varepsilon|_{t=0} = \bu_0, 
   \end{aligned}\right. 
\end{equation}
where, analogous to the diagonal representation \eqref{eq:diagonalised jacobian F}, $\bM$ is defined by 
\begin{equation}\label{eq:modified flux}
    \bM(\bu) \defeq -\frac{1}{2}\left( \begin{matrix}
        -1 & 1 \\ 
        1 & 1
    \end{matrix} \right) \left( \begin{matrix}
        \phi_1(\bu) & 0 \\ 
        0 & \phi_2(\bu) 
    \end{matrix} \right) \left( \begin{matrix}
        1 & -1 \\ 
        -1 & -1
    \end{matrix} \right), 
\end{equation}
where $\phi_1$ and $\phi_2$ are $C^1$ approximations of $\lambda_1$ and $\lambda_2$, respectively, with cut-off at the value $c_0$. In detail, we define, for all $\delta>0$, the function $\psi_\delta \in C^1(\mathbb{R})$ by 
\begin{equation}
    \label{psi_delta_definition}
    \begin{split}
    \psi_\delta(s)  \defeq  \left\lbrace \begin{aligned}
        & \frac{\delta}{2} \quad && \text{for } s \leq 0, \\ 
        & \frac{1}{2\delta}(s^2+\delta^2) \quad && \text{for } 0 < s \leq \delta, \\ 
        & s \quad && \text{for } s > \delta, 
    \end{aligned} \right.
    \end{split}
\end{equation}
and we note that $\delta/2 \leq \psi_\delta(s) \leq C(1+s)$, whence $\frac{1}{\psi_\delta} \in L^1_{\loc}(\mathbb{R})$. Define 
\begin{equation*}
    \phi_1(\bu)  \defeq  -\frac{1}{\psi_{c_0}(\sigma-\beta)}  \quad \text{and} \quad 
    \phi_2(\bu)  \defeq  \frac{1}{\psi_{c_0}(\sigma+\beta)}, 
\end{equation*}
and observe that there holds 
\begin{equation}\label{eq:modified flux coincides in good region}
    \bM(\bu) = \nabla_\bu \bF(\bu) \quad \text{whenever } w_1(\bu), \, w_2(\bu) \geq c_0. 
\end{equation}
Furthermore, it is clear from the diagonal representations \eqref{eq:diagonalised jacobian F} and \eqref{eq:modified flux} that $\bM$ has eigenvectors given by $\br_1, \, \br_2$, with eigenvalues given by $\phi_1, \, \phi_2$, respectively. In turn, the Riemann invariant coordinates $w_1, \, w_2$ from \eqref{eq:riemann invariants} are the same for the equation 
\begin{equation*}
    \bu_t + \bM(\bu) \bu_x = 0 
\end{equation*}
as for the original equation \eqref{eq:consvn law carroll gamma 3}. It follows that, in Riemann invariant coordinates, 
\begin{equation}\label{eq:eigenvalues in terms of riemann invariants}
    \phi_1 = -\frac{1}{\psi_{c_0}(w_2)}  , \quad 
    \phi_2 = \frac{1}{\psi_{c_0}(w_1)}. 
\end{equation}

Finally, observe that there holds 
\begin{equation*}
    0 < |\phi_j(\bu)| \leq \frac{2}{c_0} \quad (j=1,2), 
\end{equation*}
and note that $\bu \mapsto \bM(\bu)$ is $C^1$. Using the Cauchy--Schwarz inequality for the matrix operator norm, we have 
\begin{equation}\label{eq:op norm of auxiliary M}
    \vertiii{\bM(\bu)} \leq C \big( |\phi_1(\bu)| + |\phi_2(\bu)| \big) \leq C 
\end{equation}
for some positive constant $C$ depending only on $c_0$, and in particular independent of $\bu$.

\subsection{Existence for the Modified Regularised Problem}

We therefore turn to establishing the following result.

\begin{lemma}\label{lem:first existence using fixed point}
    Let $\varepsilon>0$ and $\bu_0 \in L^2(\mathbb{R})$. There exists $\bu^\varepsilon \in L^2_{\loc}(0,T;H^1(\mathbb{R}))$ with $\bu^\varepsilon_t \in L^2_{\loc}(0,T;H^{-1}(\mathbb{R}))$ be a weak solution of the modfiied regularised problem \eqref{eq:modified regularised problem}, \textit{i.e.} 
    \begin{equation*}
    \left\lbrace\begin{aligned}
        &\bu^\varepsilon_t + \bM(\bu^\varepsilon)\bu^\varepsilon_x = \varepsilon  \bu^\varepsilon_{xx}, \\ 
        &\bu^\varepsilon|_{t=0} = \bu_0. 
    \end{aligned}\right.
\end{equation*}
\end{lemma}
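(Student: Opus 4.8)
The plan is to realise $\bu^\varepsilon$ as the fixed point of the solution operator of the linear heat system obtained by freezing the convective coefficient $\bM$, and then to continue the resulting local-in-time solution up to the fixed horizon $T$. Fix $\varepsilon>0$ and $T>0$ and work in the Banach space
\[
X_\tau \defeq C\big([0,\tau];L^2(\mathbb{R})\big) \cap L^2\big(0,\tau;H^1(\mathbb{R})\big), \qquad \Vert \bv \Vert_{X_\tau}^2 \defeq \sup_{t\in[0,\tau]}\Vert \bv(t)\Vert_{L^2}^2 + \varepsilon\int_0^\tau \Vert \bv_x(t)\Vert_{L^2}^2 \d t.
\]
The two structural properties recorded just above are the crux: first, $\vertiii{\bM(\bu)} \leq C(c_0)$ uniformly in $\bu$ by \eqref{eq:op norm of auxiliary M}; second, $\bu \mapsto \bM(\bu)$ is globally Lipschitz, since $\psi_{c_0} \geq c_0/2$ and $\psi_{c_0}'$ is bounded, so that $\phi_1 = -1/\psi_{c_0}(w_2)$ and $\phi_2 = 1/\psi_{c_0}(w_1)$ have bounded gradients in $\bu$. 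Consequently, for any $\bv \in X_\tau$ the source $\bM(\bv)\bv_x$ lies in $L^2(0,\tau;L^2(\mathbb{R}))$, so the linear problem $\bw_t - \varepsilon\bw_{xx} = -\bM(\bv)\bv_x$, $\bw|_{t=0}=\bu_0$, admits a unique weak solution $\bw \in X_\tau$ with $\bw_t \in L^2(0,\tau;H^{-1}(\mathbb{R}))$ by classical linear parabolic theory \cite{ladyzhenskaia1988linear}; set $\Phi(\bv) \defeq \bw$.

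I would then apply the Banach fixed point theorem to $\Phi$ on a closed ball $B_R \subset X_\tau$, for $\tau$ small. Testing the linear equation against $\bw$ and using $\vertiii{\bM}\leq C$, Young's inequality and Grönwall yields a bound of the form $\Vert\Phi(\bv)\Vert_{X_\tau}^2 \leq C_0\Vert\bu_0\Vert_{L^2}^2 + \omega(\tau)$ with $C_0$ absolute and $\omega(\tau)\to 0$ as $\tau\to0$ (depending on $\varepsilon$, $c_0$, $R$), so that $\Phi(B_R)\subseteq B_R$ once $R$ is fixed large enough and $\tau$ small; uniqueness of weak solutions of the linear problem makes $\Phi$ well defined on $X_\tau$. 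For the contraction, set $\mathbf{z}\defeq\Phi(\bv_1)-\Phi(\bv_2)$, which solves
\[
\mathbf{z}_t - \varepsilon\mathbf{z}_{xx} = -\bM(\bv_1)(\bv_1-\bv_2)_x - \big(\bM(\bv_1)-\bM(\bv_2)\big)\bv_{2,x}, \qquad \mathbf{z}|_{t=0}=0.
\]
Testing against $\mathbf{z}$: the first source term is absorbed using the uniform bound on $\bM$, while for the second I would use the Lipschitz bound on $\bM$ together with the one-dimensional interpolation $\Vert\bv_1-\bv_2\Vert_{L^\infty_x}\leq C\Vert\bv_1-\bv_2\Vert_{L^2_x}^{1/2}\Vert(\bv_1-\bv_2)_x\Vert_{L^2_x}^{1/2}$ and $\bv_2\in B_R$. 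After Young's inequality and absorption this produces $\Vert\mathbf{z}\Vert_{X_\tau}^2 \leq \omega_1(\tau)\Vert\bv_1-\bv_2\Vert_{X_\tau}^2$ with $\omega_1(\tau)\to0$ as $\tau\to0$, hence a contraction for some $\tau_1>0$ depending only on $\varepsilon$, $c_0$ and $\Vert\bu_0\Vert_{L^2}$. This gives a unique solution of the modified regularised problem on $[0,\tau_1]$.

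To reach the prescribed $T$, I would reiterate: testing the equation satisfied by the solution itself against $\bu^\varepsilon$ and using $\vertiii{\bM}\leq C$ produces $\tfrac{\der}{\der t}\Vert\bu^\varepsilon(t)\Vert_{L^2}^2 \leq C(\varepsilon,c_0)\Vert\bu^\varepsilon(t)\Vert_{L^2}^2$, so Grönwall forbids blow-up of the $L^2$ norm on $[0,T]$; thus the local existence time at each restart is bounded below by a fixed $\tau_*=\tau_*(\varepsilon,c_0,T,\Vert\bu_0\Vert_{L^2})>0$, and after finitely many steps the solution is defined on all of $[0,T]$. Finally $\bu^\varepsilon_t \in L^2_\loc(0,T;H^{-1}(\mathbb{R}))$ is read directly from the equation, since $\bu^\varepsilon_{xx}\in L^2(0,T;H^{-1}(\mathbb{R}))$ and $\bM(\bu^\varepsilon)\bu^\varepsilon_x\in L^2(0,T;L^2(\mathbb{R}))$. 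I expect the contraction estimate to be the main obstacle: the term $\big(\bM(\bv_1)-\bM(\bv_2)\big)\bv_{2,x}$ carries both factors at low regularity, so one must spend the $1$D Sobolev embedding to close it, and one must keep careful track of the (harmless but genuine) dependence of $\tau_1$ on $\varepsilon$ so that the continuation argument indeed exhausts $[0,T]$.
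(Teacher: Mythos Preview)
Your approach is correct and genuinely different from the paper's. The paper does not use the Banach contraction principle; instead it applies Schaefer's fixed point theorem to the Duhamel map $L\bu(t) = S_\varepsilon(t)\bu_0 - \int_0^t S_\varepsilon(t-s)\bM(\bu)\bu_x\,\d s$ on the space $X=\{\bv\in L^2(0,T;H^1):\bv_t\in L^2(0,T;H^{-1})\}$, proving that $L$ is continuous and compact (via Aubin--Lions) and that $\{\bv:\bv=\lambda L\bv,\ \lambda\in[0,1]\}$ is bounded by a uniform energy estimate. This yields existence on all of $[0,T]$ in one stroke, without any local/continuation dichotomy, and uses only the continuity and uniform boundedness of $\bM$. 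Your route instead exploits the additional global Lipschitz continuity of $\bM$ (a genuine observation, following from $\psi_{c_0}\geq c_0/2$ and $\psi_{c_0}'$ bounded) to run a contraction argument; this is more elementary in that it avoids any compactness machinery and delivers local uniqueness as a by-product, at the cost of needing the separate Grönwall continuation step.

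One point to tighten: in the contraction estimate, the \emph{first} source term $-\bM(\bv_1)(\bv_1-\bv_2)_x$ is not quite ``absorbed'' by the uniform bound on $\bM$ alone. A naive Young-plus-Grönwall splitting $|I_1|\leq \tfrac12\|\mathbf{z}\|_{L^2}^2+C\|(\bv_1-\bv_2)_x\|_{L^2}^2$ produces, after time integration, a contribution $C\varepsilon^{-1}\|\bv_1-\bv_2\|_{X_\tau}^2$ with \emph{no} small factor in $\tau$. The fix is to use H\"older in time before Young: since $\|\mathbf{z}(\cdot)\|_{L^2}\in L^\infty_t$ and $\|(\bv_1-\bv_2)_x(\cdot)\|_{L^2}\in L^2_t$, one has $\int_0^\tau|I_1|\leq C\tau^{1/2}\varepsilon^{-1/2}\|\mathbf{z}\|_{X_\tau}\|\bv_1-\bv_2\|_{X_\tau}$, and now Young gives the desired $\omega_1(\tau)$. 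With this adjustment your argument closes; the second term is handled as you indicate.
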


Our approach is classical; nevertheless, for clarity and completeness we give a self-contained exposition. We shall build the solution $\bu^\varepsilon$ using a fixed-point argument, by interpreting \eqref{eq:modified regularised problem} as a perturbation of the heat equation and using the Duhamel Principle. 

\smallskip 

Suppose that $\bu^\varepsilon$ is a solution of \eqref{eq:modified regularised problem}. Then $\bv(t,x) \defeq \bu^\varepsilon(t/\varepsilon,x)$ is a solution of 
\begin{equation*}
    \bv_t - \bv_{xx} = -\varepsilon^{-1}\bM(\bv)\bv_x 
\end{equation*}
with the same initial data $\bv_0 = \bu_0$. By the Duhamel Principle, $\bv$ must then verify the representation formula 
\begin{equation*}
    \bv(t,x) = \int_\mathbb{R} \Gamma(t,x-y) \bu_0(y) \d y - \varepsilon^{-1}\int_0^t \int_\mathbb{R} \Gamma(t-s,x-y) \bM(\bv(s,y)) \bv_y(s,y) \d y \d s, 
\end{equation*}
where $\Gamma$ is the one-dimensional heat kernel, \textit{i.e.} 
\begin{equation*}
    \Gamma(t,x) = \frac{1}{\sqrt{4\pi t}}e^{-\frac{x^2}{4t}}. 
\end{equation*}
Rewriting in terms of $\bu$, this formula corresponds to 
\begin{equation*}
    \bu^\varepsilon(t,x) = \int_\mathbb{R} \Gamma(\varepsilon t,x-y) \bu_0(y) \d y - \int_0^{t} \int_\mathbb{R} \Gamma(\varepsilon (t - s),x-y) \bM(\bu^\varepsilon(s,y))\bu^\varepsilon_y(s,y) \d y \d s. 
\end{equation*}
Define the mapping 
\begin{equation}\label{eq:L def}
    L\bu(t,x) \defeq \int_\mathbb{R} \Gamma(\varepsilon t,x-y) \bu_0(y) \d y - \int_0^{t} \int_\mathbb{R} \Gamma(\varepsilon(t-s),x-y) \bM(\bu(s,y)) \bu_y(s,y) \d y \d s. 
\end{equation}
Our objective is to show that there exists a fixed point $\bu_* = L\bu_*$; such a $\bu_*$ is manifestly a solution of \eqref{eq:modified regularised problem}, as direct computation shows that $\bw = L\bu$ solves 
\begin{equation*}
    \left\lbrace\begin{aligned}
    &\bw_t - \varepsilon\bw_{xx} = - \bM(\bu)\bu_x, \\ 
    &\bw|_{t=0} = \bu_0. 
    \end{aligned}\right. 
\end{equation*}

\smallskip

In what follows we will employ Schaefer's Fixed Point Theorem: 
\begin{theorem}[Schaefer's Theorem]\label{thm:schaefer}
    Let $X$ be a Banach space and $L:X \to X$ a continuous and compact mapping of $X$ into itself, such that the set 
    \begin{equation*}
      A  \defeq   \Big\{ \bv \in X : \, \bw = \lambda L\bw \text{ for some } \lambda \in [0,1] \Big\} 
    \end{equation*}
    is bounded in $X$. Then $L$ admits a fixed point $\bw_* \in X$ such that $L\bw_* = \bw_*$. 
\end{theorem}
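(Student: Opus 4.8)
The plan is to deduce Theorem \ref{thm:schaefer} from the Schauder Fixed Point Theorem by the classical radial-retraction argument (as in, e.g., Evans). Since $0 = 0 \cdot L0$ we have $0 \in A$, so $A \neq \emptyset$, and by hypothesis $A$ is bounded; fix $M > 0$ with $\Vert \bw \Vert < M$ for every $\bw \in A$ (choosing $M$ strictly larger than $\sup_{\bw \in A}\Vert \bw\Vert$ so that a strict inequality is available later). Let $\overline{B} \defeq \{\bw \in X : \Vert \bw \Vert \leq M\}$, a nonempty closed bounded convex subset of $X$, and introduce the radial retraction $r : X \to \overline{B}$ by $r(\bw) = \bw$ if $\Vert \bw \Vert \leq M$ and $r(\bw) = M\bw / \Vert \bw \Vert$ if $\Vert \bw \Vert > M$. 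One checks directly that $r$ is continuous on $X$, that $\Vert r(\bw)\Vert \leq M$ for all $\bw$, and that $\Vert r(\bw)\Vert = M$ whenever $\Vert \bw \Vert > M$.

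Next, consider the composition $\widetilde{L} \defeq r \circ L : \overline{B} \to \overline{B}$. It is continuous, being a composition of the continuous maps $L$ and $r$. Moreover it is compact: $L$ maps the bounded set $\overline{B}$ into a relatively compact set $\overline{L(\overline{B})}$, and since $r$ is continuous it maps this compact set onto a compact set, so $\widetilde{L}(\overline{B}) \subset r\big(\overline{L(\overline{B})}\big)$ is relatively compact. Thus $\widetilde{L}$ is a continuous compact self-map of the nonempty closed bounded convex set $\overline{B}$, and the Schauder Fixed Point Theorem yields a point $\bw_* \in \overline{B}$ with $\widetilde{L}(\bw_*) = \bw_*$, that is, $r\big(L\bw_*\big) = \bw_*$.

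It remains to show $\Vert L\bw_* \Vert \leq M$, for then $r(L\bw_*) = L\bw_*$ and hence $\bw_* = L\bw_*$ is the sought fixed point. Suppose instead $\Vert L\bw_* \Vert > M$. Then by the definition of $r$ we have $\bw_* = r(L\bw_*) = \lambda L\bw_*$ with $\lambda \defeq M / \Vert L\bw_* \Vert \in (0,1)$, so $\bw_* \in A$ and therefore $\Vert \bw_* \Vert < M$ by the choice of $M$. On the other hand, $\Vert L\bw_* \Vert > M$ forces $\Vert \bw_* \Vert = \Vert r(L\bw_*) \Vert = M$, a contradiction. Hence $\Vert L\bw_* \Vert \leq M$ and the proof is complete.

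The only genuinely nontrivial input is the Schauder Fixed Point Theorem itself, which we take as known; everything else is the elementary verification that the radial retraction is continuous and the bookkeeping in the concluding contradiction. Accordingly, the main points requiring care are (i) confirming that $\widetilde{L} = r \circ L$ is truly compact as a self-map of $\overline{B}$ so that Schauder applies, and (ii) choosing $M$ strictly above $\sup_A \Vert\cdot\Vert$ so that the strict inequality $\Vert \bw_*\Vert < M$ clashes with the equality $\Vert \bw_*\Vert = M$ in the final step.
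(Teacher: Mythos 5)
Your proof is correct: it is the standard derivation of Schaefer's theorem from the Schauder Fixed Point Theorem via the radial retraction onto a large ball (as in Evans), and all the key points — compactness of $r \circ L$ on the closed ball, the strict choice of $M$ above $\sup_{A}\Vert\cdot\Vert$, and the concluding contradiction when $\Vert L\bw_*\Vert > M$ — are handled properly. Note that the paper itself does not prove this statement; it quotes Schaefer's theorem as a classical tool and applies it as a black box in the proof of Lemma \ref{lem:first existence using fixed point}, so your argument simply supplies the standard proof that the paper implicitly relies on.
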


Let $T>0$, and define the Banach space 
\begin{equation*}
    X \defeq \Big\{ \bv \in L^2(0,T;H^1(\mathbb{R})) : \, \bv_t \in L^2(0,T;H^{-1}(\mathbb{R})) \Big\}, 
\end{equation*}
and, for $\lambda \in [0,1]$, 
\begin{equation*}
    A_\lambda \defeq \big\{ \bw \in X : \, \bw = \lambda L\bw \big\}. 
\end{equation*}
Note that 
\begin{equation}\label{eq:identification Alambda}
        \Big\{ \bw \in X : \, \bw = \lambda L\bw \text{ for some } \lambda \in [0,1] \Big\} = \bigcup_{\lambda \in [0,1]} A_\lambda. 
    \end{equation}

\begin{lemma}\label{lem:fixed pt i}
    The mapping $L : X \to X$ is continuous and compact.
\end{lemma}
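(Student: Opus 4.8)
The plan is to treat $L$ as a composition of a linear solution operator for the heat equation with the nonlinear lower-order term $\bu \mapsto -\bM(\bu)\bu_x$, and to exploit two facts established just above: $\bM$ is $C^1$ with $\vertiii{\bM(\bu)} \leq C(c_0)$ uniformly in $\bu$ (see \eqref{eq:op norm of auxiliary M}), and $X \hookrightarrow C([0,T];L^2(\mathbb{R}))$ with the Aubin--Lions compactness $X \hookrightarrow\hookrightarrow L^2(0,T;L^2_{\loc}(\mathbb{R}))$. First I would check that $L$ maps $X$ into $X$: given $\bu \in X$, the source $\bff \defeq -\bM(\bu)\bu_x$ lies in $L^2(0,T;L^2(\mathbb{R}))$ because $\|\bff(t)\|_{L^2} \leq C\|\bu_x(t)\|_{L^2}$, and parabolic regularity for $\bw_t - \varepsilon \bw_{xx} = \bff$, $\bw|_{t=0}=\bu_0 \in L^2$, gives $\bw \in L^2(0,T;H^1) \cap C([0,T];L^2)$ with $\bw_t \in L^2(0,T;H^{-1})$, i.e. $\bw = L\bu \in X$, together with the estimate $\|L\bu\|_X \leq C\big(\|\bu_0\|_{L^2} + \|\bu_x\|_{L^2(0,T;L^2)}\big)$.

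For continuity, suppose $\bu^n \to \bu$ in $X$. Then $\bu^n_x \to \bu_x$ in $L^2(0,T;L^2)$, and, after passing to a subsequence, $\bu^n \to \bu$ pointwise a.e. Since $\bM$ is continuous and uniformly bounded, dominated convergence gives $\bM(\bu^n) \to \bM(\bu)$ in, say, $L^p(0,T;L^p_{\loc})$ for any finite $p$, and combining with the strong convergence of $\bu^n_x$ (which I may take to converge a.e. as well along a further subsequence, dominated by an $L^2$ function) yields $\bM(\bu^n)\bu^n_x \to \bM(\bu)\bu_x$ in $L^2(0,T;L^2)$; the uniform bound on $\bM$ is what lets me pass the product to the limit. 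By linearity and boundedness of the heat solution operator, $L\bu^n \to L\bu$ in $X$. A subsequence argument (every subsequence has a further subsequence converging to the same limit) upgrades this to convergence of the full sequence, so $L$ is continuous.

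For compactness, let $\{\bu^n\}$ be bounded in $X$. By Aubin--Lions, a subsequence converges strongly in $L^2(0,T;L^2_{\loc})$ and a.e.; in particular $\bu^n_x$ is bounded in $L^2(0,T;L^2)$, so $\bff^n \defeq -\bM(\bu^n)\bu^n_x$ is bounded in $L^2(0,T;L^2)$ and, passing to a further subsequence, $\bff^n \rightharpoonup \bff$ weakly there. The heat solution operator $\bff \mapsto \bw$ (with fixed data $\bu_0$) is linear and, by parabolic smoothing, maps bounded subsets of $L^2(0,T;L^2)$ into bounded subsets of $L^2(0,T;H^1)$ with time derivative bounded in $L^2(0,T;H^{-1})$, hence into a set precompact in $L^2(0,T;L^2_{\loc})$ and weakly compact in $X$; more is true, since in fact $\bff^n$ lies in a compact subset of $L^2(0,T;L^2_{\loc})$ by the strong convergence of $\bu^n_x$ combined with the a.e. convergence and uniform boundedness of $\bM(\bu^n)$, so $\bw^n = L\bu^n$ converges strongly in $X$ along the subsequence. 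Thus $L$ is compact.

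The main obstacle is the passage to the limit in the nonlinear product $\bM(\bu^n)\bu^n_x$: one needs strong convergence of $\bu^n_x$ (which is \emph{not} supplied by Aubin--Lions on $X$, only of $\bu^n$ itself) to conclude strong convergence of the product. This is why the argument for compactness must route the gain of regularity through the heat semigroup — it is the parabolic smoothing that turns the merely weakly convergent source $\bff^n$ into a strongly convergent $\bw^n$ in $X$ — rather than attempting to compactify $\{\bu^n_x\}$ directly. One must also be slightly careful that the local (in $x$) nature of the Aubin--Lions compactness is harmless here because the heat estimates are global and $\bu_0$ is fixed, so the tails are controlled uniformly.
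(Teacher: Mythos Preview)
Your continuity argument and the self-mapping step are essentially correct and match the paper. The compactness argument, however, has a genuine gap.

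You correctly identify the obstacle: for a bounded sequence $\{\bu^n\}$ in $X$, Aubin--Lions gives strong local $L^2$ convergence of $\bu^n$ but only weak $L^2$ convergence of $\bu^n_x$, so one cannot directly conclude strong convergence of the source $-\bM(\bu^n)\bu^n_x$. Your proposed remedy is ``parabolic smoothing'': the heat solution operator should upgrade a weakly convergent source to a strongly $X$-convergent $\bw^n = L\bu^n$. But this is false on the unbounded spatial domain $\mathbb{R}$. Take a source $f^n(t,x) = \phi(t,x-n)$ for a fixed nonzero $\phi \in C^\infty_c((0,T)\times\mathbb{R})$ with zero initial data: then $f^n \rightharpoonup 0$ weakly in $L^2$, yet the corresponding solution is a spatial translate of a fixed nonzero profile, so its $L^2(0,T;H^1)$ norm is a positive constant and it does not converge to $0$ in $X$. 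Maximal parabolic regularity does give $\bw^n$ (minus the heat evolution of $\bu_0$) bounded in $L^2(0,T;H^2)\cap H^1(0,T;L^2)$, hence precompact in $L^2(0,T;H^1_{\loc})$, but not in the global $H^1(\mathbb{R})$ norm that $X$ carries. Your compactness paragraph also invokes ``the strong convergence of $\bu^n_x$'', which you then correctly flag as unavailable in the next paragraph; the argument as written is internally inconsistent.

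The paper's route is different and sidesteps any need for strong convergence of $\bu^n_x$ or for compactness of the heat solution map. After obtaining $(\bw^n)_x \rightharpoonup \bw_x$ weakly in $L^2((0,T)\times\mathbb{R})$ and $\bw^n \to \bw$ strongly in $L^2$ (Aubin--Lions applied to $\{\bw^n\}$), it upgrades to strong $L^2(0,T;H^1)$ convergence by proving \emph{convergence of norms}, $\Vert(\bw^n)_x\Vert_{L^2} \to \Vert\bw_x\Vert_{L^2}$. This is done by testing the equation for $\bw^n$ against $\bw^n$ and the limit equation against $\bw$, then subtracting the two energy identities. The boundary-in-time terms vanish by the strong $L^2$ convergence of $\bw^n(t,\cdot)$; the only nontrivial term is $\int \bM(\bu^n)\bu^n_x \cdot \bw^n - \int \bM(\bu)\bu_x \cdot \bw$, handled by a weak--strong pairing: $(\bw^n)^\intercal \bM(\bu^n) \to \bw^\intercal \bM(\bu)$ strongly in $L^2$ (from $\bw^n \to \bw$ in $L^2$, the uniform bound $\vertiii{\bM} \leq C$, and dominated convergence), paired against $\bu^n_x \rightharpoonup \bu_x$ weakly in $L^2$. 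Strong convergence of $\bw^n_t$ in $L^2(0,T;H^{-1})$ then follows directly from the equation. This energy-identity-plus-norm-convergence device is the idea missing from your plan.
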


\begin{proof}
We divide the proof into several steps. 

\smallskip

 \noindent 1. \textit{$L$ is self-mapping, i.e.~$L(X) \subset X$.}  Let $\bw \defeq L\bv$, and note that there holds 
\begin{equation*}
    \left\lbrace\begin{aligned}
    &\bw_t - \varepsilon\bw_{xx} = - \bM(\bv)\bv_x, \\ 
    &\bw|_{t=0} = \bu_0 
    \end{aligned}\right. 
\end{equation*}
in the weak sense. In turn, performing the standard parabolic estimate by testing the equation with $\bw$, we obtain 
\begin{equation*}
    \frac{1}{2}\frac{\der}{\der t}\int_\mathbb{R} |\bw|^2 \d x + \varepsilon \int_\mathbb{R} |\bw_x|^2 \d x = -\int_{\mathbb{R}} \bw \cdot \bM(\bv)\bv_x \d x \leq \frac{1}{2}\int_\mathbb{R} |\bw|^2 \d x + C\int_\mathbb{R}|\bv_x|^2 \d x, 
\end{equation*}
and Gr\"onwall's Lemma implies, for a.e.~$t \in (0,T)$, 
\begin{equation*}
    \Vert \bw (t,\cdot) \Vert^2_{L^2(\mathbb{R})} \leq \Vert \bu_0 \Vert^2_{L^2(\mathbb{R})} \exp\Big( C \int_0^t \Vert \bv_x(s,\cdot) \Vert^2_{L^2(\mathbb{R})} \d s \Big). 
\end{equation*}
It therefore follows that 
\begin{equation}\label{eq:L2H1 est fixed point}
    \begin{aligned} \Vert \bw \Vert^2_{L^\infty(0,T;L^2(\mathbb{R}))} + & \varepsilon \Vert \bw_x \Vert^2_{L^2((0,T)\times\mathbb{R})} \\ 
    &\leq C(1+T)\Vert \bu_0 \Vert^2_{L^2(\mathbb{R})} \exp\Big( C \Vert \bv_x \Vert^2_{L^2((0,T)\times\mathbb{R})}  \Big) + CT \Vert \bv_x \Vert^2_{L^2((0,T)\times\mathbb{R})}, 
    \end{aligned}
\end{equation}
whence $L$ is self-mapping since $L^\infty(0,T;L^2(\mathbb{R})) \subset L^2(0,T;L^2(\mathbb{R}))$ for $T$ finite.

\smallskip 
\noindent 2. \textit{Estimate on time derivatives}. We begin with an estimate on the time derivative which will enable us to employ the Aubin--Lions Lemma. Let $\bzeta \in L^2(0,T;H^1(\mathbb{R}))$. Testing against the equation, we obtain 
\begin{equation*}
   \begin{aligned}
       |\langle \bw_t , \bzeta \rangle| &\leq \varepsilon \Big| \int_0^T \int_\mathbb{R} \bw_x \cdot \bzeta_x \d x \d t \Big| + \Big| \int_0^T \int_\mathbb{R} \bzeta \cdot \bM(\bv) \bv_x \d x \d t \Big| \\ 
       &\leq \varepsilon \Vert \bw_x \Vert_{L^2((0,T)\times\mathbb{R})} \Vert \bzeta_x \Vert_{L^2((0,T)\times\mathbb{R})} + C\Vert \bzeta \Vert_{L^2((0,T)\times\mathbb{R})} \Vert \bv_x \Vert_{L^2((0,T)\times\mathbb{R})}, 
   \end{aligned} 
\end{equation*}
whence 
\begin{equation*}
   \begin{aligned}
       \Vert \bw_t \Vert_{L^2(0,T;H^{-1}(\mathbb{R}))} \leq \varepsilon \Vert \bw_x \Vert_{L^2((0,T)\times\mathbb{R})} + C\Vert \bv_x \Vert_{L^2((0,T)\times\mathbb{R})}, 
   \end{aligned} 
\end{equation*}
\textit{i.e.}, using \eqref{eq:L2H1 est fixed point}, 
\begin{equation}\label{eq:time deriv est fixed pt}
   \begin{aligned}
       \Vert \bw_t \Vert_{L^2(0,T;H^{-1}(\mathbb{R}))} \leq &\sqrt{\varepsilon}C(1+T)^{\frac{1}{2}}\Vert \bu_0 \Vert_{L^2(\mathbb{R})} \exp\Big( C \Vert \bv_x \Vert^2_{L^2((0,T)\times\mathbb{R})} \big) \\ 
       &+ C(1+T^{\frac{1}{2}}) \Vert \bv_x \Vert_{L^2((0,T)\times\mathbb{R})}. 
   \end{aligned} 
\end{equation}

\smallskip 
\noindent 3. \textit{Weak compactness in $X$}. Let $\{\bv_n\}_n$ be a bounded sequence in $X$ and define $\bw_n  \defeq  L\bv_n$ for all $n$. The Aubin--Lions Lemma implies that, up to a subsequence which we do not relabel, there holds
\begin{equation}
    \bv_n \to \bv \quad \text{weakly in } X, \text{ strongly in } L^2((0,T)\times\mathbb{R}), \text{ and a.e.} 
\end{equation}
for some $\bv \in X$, where we also used Alaoglu's Theorem and the fact that $X$ is reflexive. Furthermore, the estimates \eqref{eq:L2H1 est fixed point} and \eqref{eq:time deriv est fixed pt} imply that $\{\bw_n\}_n$ also satisfies the assumptions of the Aubin--Lions Lemma, so there exists $\bw \in X$ such that, up to a subsequence which we do not relabel, using also Alaoglu's Theorem, 
\begin{equation}\label{eq:conv from aubin lions}
    \bw_n \to \bw \quad \text{weakly in } X, \text{ strongly in } L^2((0,T)\times\mathbb{R}), \text{ and a.e.} 
\end{equation}

\smallskip 
\noindent 4. \textit{Strong compactness in $L^2(0,T;H^1(\mathbb{R}))$}. It remains to show that the convergence also occurs strongly with respect to the topology of $X$, \textit{i.e.}~it remains to show that $\lim_n \Vert (\bw_n)_x - \bw_x \Vert_{L^2((0,T)\times\mathbb{R})} = 0$ and an analogous strong convergence for the time derivatives in the space $L^2(0,T;H^{-1}(\mathbb{R}))$. Concerning the first convergence, since we already know that $\{\bw_n\}_n$ converges weakly to $\bw$ in $L^2(0,T;H^1(\mathbb{R}))$, it suffices to show 
\begin{equation}\label{eq:required strong conv for cpct fixed pt}
    \lim_n \Vert (\bw_n)_x \Vert_{L^2((0,T)\times\mathbb{R})} = \Vert \bw_x \Vert_{L^2((0,T)\times\mathbb{R})}. 
\end{equation}
The weak formulation for $\bw_n$ reads 
\begin{equation}\label{eq:weak form wn fixed pt}
   -\int_0^T \int_\mathbb{R} \bw_n \cdot \bzeta_t \d x \d t + \varepsilon \int_0^T \int_\mathbb{R} (\bw_n)_x \cdot \bzeta_x \d x \d t = - \int_0^T \int_\mathbb{R} \bM(\bv_n) (\bv_n)_x \cdot \bzeta \d x \d t 
\end{equation}
for all $\bzeta \in C^\infty([0,T]\times\mathbb{R})$ such that $\bzeta(0,\cdot)=\bzeta(T,\cdot) = 0$. The matrix $\bM$ is continuous, so $\bv_n \to \bv$ a.e.~implies that $\bM(\bv_n) \to \bM(\bv)$ a.e. Furthermore, the bound \eqref{eq:op norm of auxiliary M} gives $\vertiii{\bM(\bv_n)} \leq C$, whence $C|\bzeta| \in L^1((0,T)\times\mathbb{R})$ is a suitable dominating function for $\bzeta^\intercal \bM(\bv_n)$ which converges a.e.~to $\bzeta^\intercal \bM(\bv)$. The Dominated Convergence Theorem then yields 
\begin{equation*}
    \bzeta^\intercal \bM(\bv_n) \to \bzeta^\intercal \bM(\bv) \quad \text{strongly in } L^p((0,T)\times\mathbb{R}) \text{ for all } p \in [1,\infty). 
\end{equation*}
Since $(\bv_n)_x \to \bv_x$ weakly in $L^2((0,T)\times\mathbb{R})$, it follows that there holds the convergence for the product 
\begin{equation*}
    \bM(\bv_n) (\bv_n)_x \cdot \bzeta \rightharpoonup \bM(\bv) \bv_x \cdot \bzeta \quad \text{weakly in } L^1((0,T)\times\mathbb{R}). 
\end{equation*}
Using the above and the aforementioned weak convergences, we deduce from \eqref{eq:weak form wn fixed pt} that there holds 
\begin{equation}\label{eq:weak form w fixed pt}
   -\int_0^T \int_\mathbb{R} \bw \cdot \bzeta_t \d x \d t + \varepsilon \int_0^T \int_\mathbb{R} \bw_x \cdot \bzeta_x \d x \d t = - \int_0^T \int_\mathbb{R} \bM(\bv) \bv_x \cdot \bzeta \d x \d t. 
\end{equation}
A standard approximation argument allows us to introduce $\bw_n$ into the weak formulation \eqref{eq:weak form wn fixed pt} and $\bw$ into \eqref{eq:weak form w fixed pt} on an arbitrary time interval $[t_1,t_2] \subsetneq [0,T]$. Subtracting the two resulting equations, we get 
\begin{equation}\label{eq:diff of weak forms fixed pt}
   \begin{aligned} \frac{1}{2}&\int_\mathbb{R} \big( |\bw_n(t_2,x)|^2 -  |\bw(t_2,x)|^2 \big) \d x - \frac{1}{2}\int_\mathbb{R} \big(|\bw_n(t_1,x)|^2 - |\bw(t_1,x)|^2 \big) \d x \\ 
   &= \varepsilon \int_{t_1}^{t_2} \int_\mathbb{R} \big( |(\bw_n)_x|^2 - |\bw_x|^2 \big) \d x \d t -\int_{t_1}^{t_2}\int_\mathbb{R} \Big( \bM(\bv_n) (\bv_n)_x \cdot \bv_n - \bM(\bv) \bv_x \! \cdot \! \bv \Big) \d x \d t. 
   \end{aligned}
\end{equation}
Recall that the convergence $\bw_n \to \bw$ strongly in $L^2((0,T)\times\mathbb{R})$ implies, up to a subsequence which we do not relabel, 
\begin{equation*}
    \bw_n(t,\cdot) \to \bw(t,\cdot) \quad \text{strongly in } L^2(\mathbb{R}) \text{ for a.e.~}t. 
\end{equation*}
It follows that the left-hand side of \eqref{eq:diff of weak forms fixed pt} vanishes in the limit as $n\to\infty$. Hence, using also the continuity of $s \mapsto |s|$ to interchange the limit with the absolute value, 
\begin{equation}\label{eq:diff of weak forms fixed pt checkpoint}
      \begin{aligned} \lim_n \Big| \Vert (\bw_n )_x \Vert^2_{L^2((t_1,t_2)\times\mathbb{R})} -&  \Vert \bw_x \Vert^2_{L^2((t_1,t_2)\times\mathbb{R})}\Big| \\ 
      &= \varepsilon^{-1} \bigg|\lim_n \int_{t_1}^{t_2}\int_\mathbb{R} \Big( \bM(\bv_n) (\bv_n)_x \cdot \bv_n - \bM(\bv) \bv_x \cdot \bv \Big) \d x \d t\bigg|. 
   \end{aligned}
\end{equation}
For the term on the right-hand side, we write
\begin{equation}\label{eq:using dct to conclude fixed pt cpctness}
    \begin{aligned}
      \Vert \bv_n^\intercal \bM(\bv_n) - \bv^\intercal \bM(\bv) \Vert_{L^2} &\leq \Vert \bM(\bv_n) \cdot (\bv_n - \bv) \Vert_{L^2} + \Vert \big( \bM(\bv_n) - \bM(\bv) \big) \cdot \bv \Vert_{L^2} \\ 
      &\leq C \Vert \bv_n - \bv \Vert_{L^2} + \bigg( \int_{0}^{T}\int_\mathbb{R} \vertiii{\bM(\bv_n) - \bM(\bv)}^2 |\bv|^2 \d x \d t \bigg)^{\frac{1}{2}}. 
    \end{aligned}
\end{equation}
The first term on the right-hand side manifestly vanishes due to the strong convergence in $L^2$ for the sequence $\{\bv_n\}_n$. Recall that $\bM(\bv_n) \to \bM(\bv)$ a.e.~ and $\vertiii{\bM(\bv_n) - \bM(\bv)}^2 \leq C$, so the Dominated Convergence Theorem implies that the second term on the right-hand side vanishes in the limit as $n\to\infty$. It follows that 
\begin{equation*}
    \begin{aligned}
      \bv_n^\intercal \bM(\bv_n) \to \bv^\intercal \bM(\bv) \quad \text{strongly in } L^2((0,T)\times\mathbb{R}). 
    \end{aligned}
\end{equation*}
Since $(\bv_n)_x \rightharpoonup \bv_x$ weakly in $L^2((0,T)\times\mathbb{R})$, it follows that 
\begin{equation*}
    \bM(\bv_n) (\bv_n)_x \cdot \bv_n \rightharpoonup \bM(\bv) \bv_x \cdot \bv \quad \text{weakly in } L^1((0,T)\times\mathbb{R}), 
\end{equation*}
and hence 
\begin{equation*}
      \begin{aligned} 
      \lim_n \int_{t_1}^{t_2}\int_\mathbb{R} \Big( \bM(\bv_n) (\bv_n)_x \cdot \bv_n - \bM(\bv) \bv_x \cdot \bv \Big) \d x \d t = 0. 
   \end{aligned}
\end{equation*}
Returning to \eqref{eq:diff of weak forms fixed pt checkpoint}, we obtain 
\begin{equation*}
    \lim_n \Vert (\bw_n)_x \Vert_{L^2((t_1,t_2)\times\mathbb{R})} = \Vert \bw_x \Vert^2_{L^2((t_1,t_2)\times\mathbb{R})} \quad \text{for a.e.~}t_1, \, t_2. 
\end{equation*}
We now relax the assumption $[t_1,t_2] \subsetneq [0,T]$ (by taking a further subsequence if necessary), and deduce 
\begin{equation*}
    \lim_n \Vert (\bw_n)_x \Vert_{L^2((0,T)\times\mathbb{R})} = \Vert \bw_x \Vert^2_{L^2((0,T)\times\mathbb{R})}; 
\end{equation*}
this, in addition to the weak convergence in $L^2(0,T;H^1(\mathbb{R}))$, implies 
\begin{equation}\label{eq:strong conv L2H1 for fixed pt}
    \bw_n \to \bw \quad \text{strongly in } L^2(0,T;H^1(\mathbb{R})). 
\end{equation}

\smallskip 
\noindent 5. \textit{Strong compactness of time derivatives in $L^2(0,T;H^{-1}(\mathbb{R}))$}. It remains to show that $(\bw_n)_t \to \bw_t$ strongly in $L^2(0,T;H^{-1}(\mathbb{R}))$. Taking the difference of the two weak formulations, there holds 
\begin{equation*}
    \begin{aligned}
        |\langle (\bw_n)_t \! & - \! \bw_t , \bzeta \rangle| \\ 
        &\leq \varepsilon \int_0^T \int_\mathbb{R} \big| \big( (\bw_n)_x \! -\! \bw_x \big) \cdot \bzeta_x \big| \d x \d t + \int_0^T \int_\mathbb{R} \big| \big(\bM(\bv_n)(\bv_n)_x \! -\! \bM(\bv)\bv_x \big) \! \cdot \! \bzeta \big| \d x \d t \\ 
        &\leq \Big(\varepsilon \Vert \bw_n \! - \! \bw \Vert_{L^2(0,T;H^1(\mathbb{R}))} \!+\! \Vert \bM(\bv_n)(\bv_n)_x \!-\! \bM(\bv)\bv_x
 \Vert_{L^2((0,T)\times\mathbb{R})}\Big)\Vert \bzeta \Vert_{L^2(0,T;H^1(\mathbb{R}))}
    \end{aligned}
\end{equation*}
for all $\bzeta \in L^2(0,T;H^1(\mathbb{R}))$. We compute 
\begin{equation*}
    \begin{aligned}
        \Vert \bM(\bv_n)(\bv_n)_x - \bM(\bv)&\bv_x
 \Vert_{L^2((0,T)\times\mathbb{R})} \\ &\leq \Vert \bM(\bv_n)(\bv_n - \bv)_x \Vert_{L^2((0,T)\times\mathbb{R})} + \Vert (\bM(\bv) - \bM(\bv_n))\bv_x
 \Vert_{L^2((0,T)\times\mathbb{R})} \\ 
 &\leq C \Vert \bv_n \! - \! \bv \Vert_{L^2(0,T;H^1(\mathbb{R}))} + \Vert (\bM(\bv_n)\! -\! \bM(\bv) ) \bv_x\Vert_{L^2((0,T)\times\mathbb{R})}
    \end{aligned}
\end{equation*}
so that there holds 
\begin{equation*}
    \begin{aligned}
        \Vert (\bw_n)_t - \bw_t \Vert_{L^2(0,T;H^{-1}(\mathbb{R}))} \leq \, & \varepsilon \Vert \bw_n - \bw \Vert_{L^2(0,T;H^1(\mathbb{R}))} + C\Vert \bv_n - \bv \Vert_{L^2(0,T;H^1(\mathbb{R}))} \\ 
        &+ \Vert (\bM(\bv_n)\! -\! \bM(\bv) ) \bv_x\Vert_{L^2((0,T)\times\mathbb{R})}, 
    \end{aligned}
\end{equation*}
giving
\begin{equation*}
    \limsup_n  \Vert (\bw_n)_t - \bw_t \Vert_{L^2(0,T;H^{-1}(\mathbb{R}))} = \limsup_n \Vert (\bM(\bv_n)\! -\! \bM(\bv) ) \bv_x\Vert_{L^2((0,T)\times\mathbb{R})}. 
\end{equation*}
Arguing as we did to control the second term on the right-hand side of the final line of \eqref{eq:using dct to conclude fixed pt cpctness}, we use the Dominated Convergence Theorem to deduce
\begin{align*}
    \limsup_n \Vert (\bM(\bv_n)\! -\! \bM(\bv) ) \bv_x\Vert_{L^2((0,T)\times\mathbb{R})} 
    &\leq \limsup_n \bigg(\int_0^T \int_\mathbb{R} \vertiii{\bM(\bv_n) - \bM(\bv)}^2 |\bv_x|^2 \d x \d t \bigg)^{\frac{1}{2}} \\ 
    &= 0, 
\end{align*}
and it follows that 
\begin{equation}\label{eq:time deriv cpctness fixed pt}
    (\bw_n)_t \to \bw_t  \quad \text{strongly in } L^2(0,T;H^{-1}(\mathbb{R})). 
\end{equation}

\smallskip 
\noindent 6. \textit{Compactness in $X$ and continuity}. By combining \eqref{eq:strong conv L2H1 for fixed pt} and \eqref{eq:time deriv cpctness fixed pt}, we have shown that, given any bounded sequence $\{\bv_n\}_n$ in $X$, the sequence $\{L\bv_n \}_n$ admits a strongly convergent subsequence in $X$, \textit{i.e.}~the mapping $L:X \to X$ is compact. The continuity of $L$ is immediate from the compactness and the uniqueness of limits, \textit{i.e.} given a sequence $\{\bv_n \}_n$ in $X$ converging to $\bv \in X$, the previous proof shows that $\{\bw_n=L\bv_n\}_n$ converges to $\bw=L\bv \in X$. 
\end{proof}

\begin{lemma}\label{lem:fixed pt ii}
    The set $\big\{ \bv \in X : \, \bv = \lambda L\bv \text{ for some } \lambda \in [0,1] \big\}$ is bounded in $X$. 
\end{lemma}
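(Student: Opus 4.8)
The plan is to recognise that a fixed point $\bv = \lambda L\bv$ of $\lambda L$ is nothing but a weak solution---with regularity $\bv \in X$, since $L(X)\subset X$ by Lemma \ref{lem:fixed pt i}---of the rescaled problem
\[
\bv_t - \varepsilon\bv_{xx} = -\lambda\,\bM(\bv)\bv_x, \qquad \bv|_{t=0} = \lambda\bu_0,
\]
as one sees by unwinding the definition \eqref{eq:L def} of $L$: the factor $\lambda$ multiplies both the Duhamel term and the initial datum, while the argument of $\bM$ remains $\bv$ itself. Boundedness of $\bigcup_{\lambda\in[0,1]}A_\lambda$ in $X$ will then follow from an energy estimate that is uniform in $\lambda \in [0,1]$.

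First I would test this equation with $\bv$ itself; this is legitimate because $\bv \in L^2(0,T;H^1(\mathbb{R}))$ with $\bv_t \in L^2(0,T;H^{-1}(\mathbb{R}))$, so $\bv \in C([0,T];L^2(\mathbb{R}))$ and $\langle\bv_t,\bv\rangle = \tfrac12\tfrac{\der}{\der t}\Vert\bv\Vert_{L^2(\mathbb{R})}^2$ by the standard Lions--Magenes identity. This yields $\tfrac12\tfrac{\der}{\der t}\Vert\bv\Vert_{L^2(\mathbb{R})}^2 + \varepsilon\Vert\bv_x\Vert_{L^2(\mathbb{R})}^2 = -\lambda\int_\mathbb{R}\bv\cdot\bM(\bv)\bv_x\,\d x$. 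The crude operator-norm bound \eqref{eq:op norm of auxiliary M}, namely $\vertiii{\bM(\bv)}\leq C$ with $C$ depending only on $c_0$, together with $\lambda \leq 1$ and Young's inequality, lets me absorb $\tfrac{\varepsilon}{2}\Vert\bv_x\Vert_{L^2(\mathbb{R})}^2$ into the left-hand side, leaving $\tfrac{\der}{\der t}\Vert\bv\Vert_{L^2(\mathbb{R})}^2 + \varepsilon\Vert\bv_x\Vert_{L^2(\mathbb{R})}^2 \leq \tfrac{C^2}{\varepsilon}\Vert\bv\Vert_{L^2(\mathbb{R})}^2$. Since $\Vert\bv(0,\cdot)\Vert_{L^2(\mathbb{R})} = \lambda\Vert\bu_0\Vert_{L^2(\mathbb{R})} \leq \Vert\bu_0\Vert_{L^2(\mathbb{R})}$, Grönwall's Lemma bounds $\Vert\bv\Vert_{L^\infty(0,T;L^2(\mathbb{R}))}$ in terms of $\varepsilon$, $T$, $c_0$ and $\Vert\bu_0\Vert_{L^2(\mathbb{R})}$ only, and integrating the differential inequality over $(0,T)$ then bounds $\Vert\bv_x\Vert_{L^2((0,T)\times\mathbb{R})}$, hence $\Vert\bv\Vert_{L^2(0,T;H^1(\mathbb{R}))}$, all independently of $\lambda$. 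To control the time derivative I would pair the equation with an arbitrary $\bzeta \in L^2(0,T;H^1(\mathbb{R}))$ and use \eqref{eq:op norm of auxiliary M} once more, obtaining $\Vert\bv_t\Vert_{L^2(0,T;H^{-1}(\mathbb{R}))} \leq (\varepsilon + C)\Vert\bv_x\Vert_{L^2((0,T)\times\mathbb{R})}$, which has just been estimated. Combining the two bounds shows that the set $\bigcup_{\lambda\in[0,1]}A_\lambda$ (\textit{cf.}~\eqref{eq:identification Alambda}) is bounded in $X$.

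There is no serious obstacle here; the estimate is completely elementary precisely \emph{because} the modified matrix $\bM$ was built with the cut-off $\psi_{c_0}$ so that $\vertiii{\bM(\cdot)}$ is globally bounded---without this regularisation $\bM(\bv)$ would blow up near $\{w_1 = 0\}\cup\{w_2 = 0\}$ and the $L^2$ energy estimate would not close. The only bookkeeping points are to verify that the coefficient $\lambda$ appears in front of $\bM(\bv)\bv_x$ (and that the argument of $\bM$ is $\bv$, not $\lambda^{-1}\bv$) and in front of the initial datum, and to note that the constants are permitted to degenerate as $\varepsilon \to 0$ or $T \to \infty$, which is harmless since both are fixed at this stage; the uniform-in-$\varepsilon$ estimates required for the vanishing viscosity limit are obtained separately via the invariant-region argument of Proposition \ref{prop:approx pbms}.
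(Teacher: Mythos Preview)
Your proposal is correct and follows essentially the same approach as the paper: identify $\bv = \lambda L\bv$ as a weak solution of $\bv_t - \varepsilon\bv_{xx} = -\lambda\bM(\bv)\bv_x$ with initial datum $\lambda\bu_0$, perform the $L^2$ energy estimate using the global bound \eqref{eq:op norm of auxiliary M} on $\bM$, apply Gr\"onwall, and then control $\bv_t$ by duality. The only cosmetic difference is that the paper tracks the $\lambda$-dependence of the constants and treats $\lambda = 0$ separately (via uniqueness for the heat equation with zero data), whereas you bound $\lambda \leq 1$ at the outset and thereby handle all $\lambda \in [0,1]$ in one stroke; your version is marginally cleaner in this respect.
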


\begin{proof}
\noindent 1. \textit{Case $\lambda \neq 0$}. Let $\bv \in A_\lambda$ for $\lambda > 0$. Then $\bv$ solves the equation 
\begin{equation*}
    \left\lbrace  \begin{aligned}
        & \bv_t - \varepsilon\bv_{xx} =  -\lambda \bM(\bv)\bv_x, \\ 
        & \bv|_{t=0} = \lambda \bu_0. 
    \end{aligned} \right. 
\end{equation*}
It follows that the usual parabolic estimate, performed by testing the equation against $\bv$ and using the boundedness of $\bM$ from \eqref{eq:op norm of auxiliary M}, yields 
\begin{equation*}
 \begin{aligned}   \frac{1}{2}\frac{\der}{\der t}\int_{\mathbb{R}} |\bv|^2 \d x + \varepsilon \int_{\mathbb{R}} |\bv_x|^2 \d x &\leq C\lambda\int_{\mathbb{R}} |\bv| |\bv_x | \d x \leq \frac{\varepsilon}{2}\int_\mathbb{R} |\bv_x|^2 \d x + C\lambda^2 \int_\mathbb{R} |\bv|^2 \d x ,
 \end{aligned}
\end{equation*}
where we used Young's inequality. We deduce that for all $t \in [0,T]$
\begin{equation}\label{eq:pre gronwall fixed point for Alambda bounded}
   \begin{aligned} \int_{\mathbb{R}} |\bv|^2 \d x + \varepsilon \int_0^t \int_{\mathbb{R}} |\bv_x|^2 \d x \d t' &\leq \lambda^2 \int_{\mathbb{R}} |\bu_0|^2 \d x + C\lambda^2 \int_0^t \int_{\mathbb{R}} |\bv|^2 \d x \d t', 
   \end{aligned}
\end{equation}
whence an application of Gr\"onwall's Lemma gives 
\begin{equation*}
   \begin{aligned} \Vert \bv(t,\cdot) \Vert^2_{L^2(\mathbb{R})} \leq \lambda^2 \Vert \bu_0 \Vert^2_{L^2(\mathbb{R})} e^{C\lambda^2 t} \quad \text{for a.e.~} t \in [0,T]. 
   \end{aligned}
\end{equation*}
By returning to \eqref{eq:pre gronwall fixed point for Alambda bounded}, we deduce that for all $\lambda > 0$ there holds 
\begin{equation}\label{eq:for lambda nonzero L2H1 bound}
    \Vert \bv \Vert_{L^2(0,T;H^1(\mathbb{R}))}^2 \leq C\lambda^2 \Vert \bu_0 \Vert_{L^2(\mathbb{R})}^2 e^{C \lambda^2 T} \quad \text{for } \bv \in A_\lambda, 
\end{equation}
for some positive constant $C$ independent of $\lambda, \, \varepsilon, \, T$. 

Returning to the equation, there holds, for all $\bzeta \in L^2(0,T;H^1(\mathbb{R}))$, 
\begin{equation*}
   \begin{aligned} |\langle \bv_t , \bzeta \rangle | &\leq \varepsilon \int_0^T \int_\mathbb{R} |\bv_x \cdot \bzeta_x| \d x \d t + \lambda \int_0^T \int_\mathbb{R} |\bM(\bv) \bv_x \cdot \bzeta| \d x \d t \\ 
   &\leq \varepsilon \Vert \bv_x \Vert_{L^2((0,T)\times\mathbb{R})}\Vert \bzeta_x \Vert_{L^2((0,T)\times\mathbb{R})} + C\lambda \Vert \bv_x \Vert_{L^2((0,T)\times\mathbb{R})} \Vert \bzeta \Vert_{L^2((0,T)\times\mathbb{R})}, 
   \end{aligned}
\end{equation*}
and thus 
\begin{equation*}
   \begin{aligned} \Vert \bv_t \Vert_{L^2(0,T;H^{-1}(\mathbb{R}))} \leq (\varepsilon + C\lambda) \Vert \bv_x \Vert_{L^2(0,T;H^1(\mathbb{R}))}, 
   \end{aligned}
\end{equation*}
which is bounded in view of \eqref{eq:for lambda nonzero L2H1 bound}. By combining the previous bound with \eqref{eq:for lambda nonzero L2H1 bound}, we deduce 
\begin{equation}\label{eq:for lambda nonzero X bound}
    \Vert \bv \Vert_{X} \leq C\lambda(\varepsilon+\lambda) \Vert \bu_0 \Vert_{L^2(\mathbb{R})} e^{C \lambda^2 T} \quad \text{for } \bv \in A_\lambda, 
\end{equation}
for some positive constant $C$ independent of $\lambda, \, \varepsilon, \, T$. 

\smallskip 

\noindent 2. \textit{Case $\lambda = 0$}. In this case, $\bv$ solves the heat equation $\bv_t - \bv_{xx}=0$ with zero initial data. Uniqueness implies that $\bv \equiv 0$, whence 
\begin{equation}\label{eq:for lambda zero X bound}
    \Vert \bv \Vert_X = 0 \quad \text{for } \bv \in A_0. 
\end{equation}

\smallskip 

\noindent 3. \textit{Conclusion}. From \eqref{eq:for lambda nonzero X bound} and \eqref{eq:for lambda zero X bound}, we deduce that there holds 
\begin{equation*}
    \Vert \bv \Vert_{X} \leq C\lambda(\varepsilon+\lambda) \Vert \bu_0 \Vert_{L^2(\mathbb{R})} e^{C \lambda^2 T} \quad \text{for } \bv \in A_\lambda, 
\end{equation*}
for all $\lambda \in [0,1]$, and thus there exists a positive constant $C_{\varepsilon,T}$ independent of $\lambda$ such that 
\begin{equation*}
    \Vert \bv \Vert_{X} \leq C_{\varepsilon,T} \quad \text{for } \bv \in A_\lambda 
\end{equation*}
for all $\lambda \in [0.1]$. Using the identification \eqref{eq:identification Alambda}, it follows that $ \big\{ \bv \in X : \, \bv = \lambda L\bv \text{ for some } \lambda \in [0,1] \big\} $ is a bounded set. 
\end{proof}

We are now ready to give the proof of Lemma \ref{lem:first existence using fixed point}. 

\begin{proof}[Proof of Lemma \ref{lem:first existence using fixed point}]
    Let $T>0$ be arbitrary. Lemmas \ref{lem:fixed pt i} and \ref{lem:fixed pt ii} imply that the conditions of Schaefer's Fixed Point Theorem (Theorem \ref{thm:schaefer}) are satisfied, so there exists $\bu^\varepsilon \in X$ a weak solution of the modified regularised problem \eqref{eq:modified regularised problem}, 
\begin{equation*}
    \left\lbrace \begin{aligned}
        &\bu^\varepsilon_t + \bM(\bu^\varepsilon) \bu^\varepsilon_x = \varepsilon \bu^\varepsilon_{xx}, \\ 
        &\bu^\varepsilon|_{t=0} = \bu_0 
    \end{aligned} \right. 
\end{equation*}
on the interval $[0,T]$. We repeat this procedure on the intervals $[nT,(n+1)T]$ using the initial data $\bu^\varepsilon(nT,\cdot)$ for $n \geq 1$ to construct a globally-defined $\bu^\varepsilon \in L^2_{\loc}(0,T;H^1(\mathbb{R}))$ weak solution. 
\end{proof}

\subsection{Invariant Regions}
This subsection is devoted to rigorously proving the formal invariant region argument presented at the start of \S \ref{sec:unif est}. We establish these invariant regions first for the modified regularised problem \eqref{eq:modified regularised problem}, and then prove Proposition \ref{prop:approx pbms}.

\begin{lemma}\label{lem:to prove the prop}
        Let $\varepsilon>0$ and $\bu_0 \in L^2(\mathbb{R}) \cap L^\infty(\mathbb{R})$ satisfy 
    \begin{equation*}
        \essinf_\mathbb{R} w_1(\bu_0) , ~ \essinf_\mathbb{R} w_2(\bu_0) \geq c_0 
    \end{equation*}
    and let $\bu^\varepsilon \in L^\infty(\mathbb{R}^2_+) \cap L^2_{\loc}(0,T;H^1(\mathbb{R}))$ with $\bu^\varepsilon_t \in L^2_{\loc}(0,T;H^{-1}(\mathbb{R}))$ be a weak solution of the modified regularised problem \eqref{eq:modified regularised problem}. Then $\bu^\varepsilon$ is smooth away from the initial time, is a classical solution, satisfies 
\begin{equation}\label{eq:RI lower bound lem to prove prop}
        w_1(\bu^\varepsilon) , ~ w_2(\bu^\varepsilon) \geq c_0 \quad \text{a.e.~in }\mathbb{R}^2_+, 
    \end{equation}
    and there exists a positive constant $C$ independent of $\varepsilon$ such that 
    \begin{equation*}
        \Vert \bu^\varepsilon \Vert_{L^\infty(\mathbb{R}^2_+)} \leq C. 
    \end{equation*}
\end{lemma}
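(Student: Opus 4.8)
The plan is to bootstrap from the weak solution furnished by Lemma \ref{lem:first existence using fixed point} to a classical one via parabolic regularity, then establish the invariant region \eqref{eq:RI lower bound lem to prove prop} rigorously using the scalar equations \eqref{eq:RI heat eqns}, and finally deduce the $L^\infty$ bound. First I would argue that $\bu^\varepsilon$ is smooth on $(0,T)\times\mathbb{R}$: since $\bM$ is $C^1$ and uniformly bounded in operator norm by \eqref{eq:op norm of auxiliary M}, the equation $\bu^\varepsilon_t - \varepsilon\bu^\varepsilon_{xx} = -\bM(\bu^\varepsilon)\bu^\varepsilon_x$ has a right-hand side in $L^2_\loc$; standard linear parabolic theory (treating the right side as a forcing term) upgrades $\bu^\varepsilon$ to $H^2_\loc$ in space, hence continuous by Sobolev embedding, and then a boot-strap using that $\bM \in C^1$ and $\bM(\bu^\varepsilon)\bu^\varepsilon_x$ gains regularity with each step yields $\bu^\varepsilon \in C^\infty((0,T)\times\mathbb{R})$. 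In particular $\bu^\varepsilon$ is a classical solution away from the initial time.

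Next, with $\bu^\varepsilon$ classical and smooth for $t>0$, the formal manipulations at the start of \S\ref{sec:unif est} become rigorous: multiplying the modified equation by $\nabla_\bu w_j^\intercal$ and using that $w_j$ are affine (so their Hessians vanish) and that $\br_1,\br_2$ are eigenvectors of $\bM$ with eigenvalues $\phi_1,\phi_2$, one obtains for $j=1,2$ the scalar semilinear parabolic equations
\begin{equation*}
    w_j(\bu^\varepsilon)_t + \phi_j(\bu^\varepsilon)\, w_j(\bu^\varepsilon)_x = \varepsilon\, w_j(\bu^\varepsilon)_{xx},
\end{equation*}
with (using \eqref{eq:eigenvalues in terms of riemann invariants}) $\phi_2 = 1/\psi_{c_0}(w_1)$ and $\phi_1 = -1/\psi_{c_0}(w_2)$, both globally bounded and Lipschitz in the relevant range. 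I would then apply the comparison principle: the constant $c_0$ is a subsolution of each equation (the lower-order and second-order terms vanish on constants), and $w_j(\bu^\varepsilon)|_{t=0} = w_j(\bu_0) \geq c_0$ a.e.; since the drift coefficient $\phi_j(\bu^\varepsilon)$ is a bounded continuous function of $(t,x)$, the maximum principle for linear parabolic equations on $\mathbb{R}$ (with the mild growth control coming from $\bu^\varepsilon \in L^\infty$, or from an approximation on large balls) gives $w_j(\bu^\varepsilon) \geq c_0$ a.e.\ in $\mathbb{R}^2_+$. This is \eqref{eq:RI lower bound lem to prove prop}, and it also confirms $\bM(\bu^\varepsilon) = \nabla_\bu\bF(\bu^\varepsilon)$ by \eqref{eq:modified flux coincides in good region}, so $\bu^\varepsilon$ solves the original regularised problem \eqref{eq:regularised pb}.

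For the uniform $L^\infty$ bound, the same comparison argument applied from above yields $w_j(\bu^\varepsilon) \leq \esssup_\mathbb{R} w_j(\bu_0) \leq 2\Vert\bu_0\Vert_{L^\infty(\mathbb{R})}$; combined with the lower bound and the bijection $(\sigma,\beta) \leftrightarrow (w_1,w_2)$ with $\sigma = \frac12(w_1+w_2)$, $\beta = \frac12(w_1-w_2)$, this gives $\Vert\bu^\varepsilon\Vert_{L^\infty(\mathbb{R}^2_+)} \leq C$ with $C$ depending only on $\Vert\bu_0\Vert_{L^\infty}$, hence independent of $\varepsilon$. The main obstacle I anticipate is making the maximum principle rigorous on the unbounded domain $\mathbb{R}$ for a semilinear equation whose solution is only known a priori to lie in $L^\infty \cap L^2_\loc(0,T;H^1)$ with $L^\infty$ spatial growth rather than decay: one cannot invoke the most elementary bounded-domain statement directly. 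I would handle this either by a Phragmén–Lindelöf-type argument exploiting the global $L^\infty$ bound on $\bu^\varepsilon$ already available (the drift $\phi_j$ is then globally bounded, so Gaussian-type barriers $c_0 - \kappa e^{\alpha(|x|^2+1) + \beta t}$ suffice), or by first localising to balls $B_R$, using the energy estimates to control boundary contributions as $R\to\infty$. The bootstrap regularity step is routine but must be stated carefully because $\bM$ is merely $C^1$, which caps the gain per iteration; fortunately $C^\infty$ is still reached since $\psi_{c_0}$, and hence $\bM$, is in fact smooth on the region $\{w_1,w_2 > 0\}$ where the solution will be shown to live — though logically one should first get enough regularity to justify \eqref{eq:RI heat eqns}, then use \eqref{eq:RI lower bound lem to prove prop} to re-enter the smooth regime and complete the bootstrap.
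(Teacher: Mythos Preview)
Your proposal is correct and follows essentially the same route as the paper: derive decoupled scalar parabolic equations for the Riemann invariants $w_1,w_2$, bootstrap to smoothness, and apply the maximum principle to obtain the two-sided bounds on $w_j$, hence on $\bu^\varepsilon$. Two minor remarks: (i) there is an index slip---the equation for $w_1$ carries the drift $\phi_2$ (since $\nabla_\bu w_1 = \sqrt{2}\,\br_2$), not $\phi_1$, and symmetrically for $w_2$; this is precisely what makes each scalar equation \emph{autonomous} in its own variable, which the paper exploits by rewriting them in conservative form $w_j{}_t \pm h(w_j)_x = \varepsilon w_j{}_{xx}$ with $h' = 1/\psi_{c_0}$; (ii) the paper runs the regularity bootstrap on these scalar equations (invoking De Giorgi--Nash--Moser), whereas you bootstrap on the system first---both orders work, but the paper's choice sidesteps your concern about $\bM$ being only $C^1$ by working with the scalar equations, whose nonlinearity $h$ is $C^2$.
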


\begin{proof}

The proof consists of several steps. 

\smallskip 
\noindent 1. \textit{Scalar equations for the Riemann invariants}. We argue as we did in order to obtain \eqref{eq:RI heat eqns}. By multiplying the equation for the modified regularised problem \eqref{eq:modified regularised problem} with $\nabla_{\bu}w_j^\intercal$ ($j=1,2$), we find that there holds 
\begin{equation*}
   \left\lbrace \begin{aligned}
        & w_1(\bu^\varepsilon)_t + \phi_2(\bu^\varepsilon) w_1(\bu^\varepsilon)_x = \varepsilon w_1(\bu^\varepsilon)_{xx}, \\ 
        & w_2(\bu^\varepsilon)_t + \phi_1(\bu^\varepsilon) w_2(\bu^\varepsilon)_x = \varepsilon w_2(\bu^\varepsilon)_{xx} 
    \end{aligned}\right. 
\end{equation*}
in the weak sense. Observe that, by defining the primitive 
\begin{equation*}
    h(s)  \defeq  \int_0^s \frac{1}{\psi_{c_0}(\tau)} \d \tau, 
\end{equation*}
where $\psi_{c_0}$ is as defined in \eqref{psi_delta_definition} and we note that $h \in C^2(\mathbb{R})$, using \eqref{eq:eigenvalues in terms of riemann invariants}, the equations for the Riemann invariants may be rewritten as 
\begin{equation}\label{eq:scalar autonomous}
 \left\lbrace  \begin{aligned}
       &w_1(\bu^\varepsilon)_t + h(w_1(\bu^\varepsilon))_x = \varepsilon w_1(\bu^\varepsilon)_{xx}, \\ 
       &w_2(\bu^\varepsilon)_t - h(w_2(\bu^\varepsilon))_x = \varepsilon w_2(\bu^\varepsilon)_{xx}. 
   \end{aligned} \right. 
\end{equation}

\smallskip 
\noindent 2. \textit{Application of the Maximum Principle}. Applying a standard bootstrapping argument to \eqref{eq:scalar autonomous} using also the De Giorgi--Nash--Moser Theorem shows that $w_j$ ($j=1,2$) are smooth away from the initial time; we omit the details for brevity. In turn, $w_1,w_2$ are classical solutions of \eqref{eq:scalar autonomous}. Furthermore, using the formulas $\sigma = \frac{1}{2}(w_1+w_2)$ and $\beta = \frac{1}{2}(w_1-w_2)$, it follows that $\bu^\varepsilon$ is smooth away from the initial time and is a classical solution of \eqref{eq:modified regularised problem}. 

It then follows from the fact that $w_j$ ($j=1,2$) are classical solutions of the autonomous scalar equations \eqref{eq:scalar autonomous} that we may apply the Maximum Principle (\textit{cf.~e.g.}~\cite[\S 2.3]{Serre1}). We deduce 
\begin{equation*}
\essinf_\mathbb{R} w_j(\bu_0) \leq w_j(\bu) \leq \esssup_\mathbb{R} w_j(\bu_0) \quad (j=1,2). 
\end{equation*}
Since the initial data is chosen such that $c_0 \leq \essinf_\mathbb{R} w_j(\bu_0)$, we obtain \eqref{eq:RI lower bound lem to prove prop}. Furthermore, it follows from $\sigma = \frac{1}{2}(w_1+w_2)$ and $\beta = \frac{1}{2}(w_1-w_2)$ that there holds a.e.~in $\mathbb{R}^2_+$ 
\begin{equation*}
   \begin{aligned} 
   &\frac{1}{2}\big(\essinf_\mathbb{R} w_1(\bu_0) + \essinf_\mathbb{R} w_2(\bu_0) \big) \leq \sigma^\varepsilon(t,x) \leq \frac{1}{2}\big(\esssup_\mathbb{R} w_1(\bu_0) + \esssup_\mathbb{R} w_2(\bu_0) \big), \\ 
   &\frac{1}{2}\big(\essinf_\mathbb{R} w_1(\bu_0) - \esssup_\mathbb{R} w_2(\bu_0) \big) \leq \beta^\varepsilon(t,x) \leq \frac{1}{2}\big(\esssup_\mathbb{R} w_1(\bu_0) - \essinf_\mathbb{R} w_2(\bu_0) \big), 
   \end{aligned}
\end{equation*}
and thus there exists a positive constant $C$, independent of $\varepsilon$, such that $\Vert \bu^\varepsilon \Vert_{L^\infty(\mathbb{R}^2_+)} \leq C$, which completes the proof. 
\end{proof}

The proof of Proposition \ref{prop:approx pbms} now follows as a simple corollary of Lemmas \ref{lem:first existence using fixed point} and \ref{lem:to prove the prop}.

\begin{proof}[Proof of Proposition \ref{prop:approx pbms}]
The existence of a global weak solution $\bu^\varepsilon$ follows from Lemma \ref{lem:first existence using fixed point}. Then, using Lemma \ref{lem:to prove the prop}, we get 
\begin{equation*}
c_0 \leq w_j(\bu) \quad \text{a.e.~in } \mathbb{R}^2_+ \quad (j=1,2), 
\end{equation*}
whence \eqref{eq:modified flux coincides in good region} implies, using the chain rule (which is justified due to the regularity of $h$ and $\bu^\varepsilon$), that
\begin{equation*}
    \bF(\bu^\varepsilon)_x = \nabla_\bu \bF(\bu^\varepsilon) \bu^\varepsilon_x = \bM(\bu^\varepsilon) \bu^\varepsilon_x \quad \text{a.e.~in } \mathbb{R}^2_+. 
\end{equation*}
Thus $\bu^\varepsilon$ solves the original regularised problem \eqref{eq:regularised pb}. As Lemma \ref{lem:to prove the prop} showed that $\bu^\varepsilon$ is smooth away from the initial time and solves \eqref{eq:modified regularised problem} in the classical sense, it follows that it is a classical solution of \eqref{eq:regularised pb}. 
\end{proof}

\subsection{Dissipation Estimate and $H^{-1}_\loc$ Compactness of Entropy Dissipation Measures}

In this subsection we derive the main dissipation estimate required for the pre-compactness in $H^{-1}_\loc$ of the entropy dissipation measures. Our main result is the following. 

\begin{lemma}[Dissipation Estimate]\label{lem:dissipation}
    Let $\bu^\varepsilon$ be a solution of the regularised problem \eqref{eq:regularised pb}. Then there exists a finite increasing function $C:(0,\infty) \to (0,\infty)$ independent of $\varepsilon$ such that, for all $t>0$, 
    \begin{equation}\label{eq:convex entropy estimate}
    \Vert \bu^\varepsilon\Vert^2_{L^\infty(0,t;L^2(\mathbb{R}))} + \Vert \sqrt{\varepsilon}\bu^\varepsilon_x \Vert^2_{L^2((0,t)\times\mathbb{R})} \leq C(t). 
\end{equation}
\end{lemma}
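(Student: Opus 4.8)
The plan is to obtain \eqref{eq:convex entropy estimate} as the energy identity attached to the special strongly convex entropy pair $(\eta^*,q^*)=\big(\tfrac12(\sigma^2+\beta^2),\,\beta\big)$ of \eqref{eq:special entropy}, whose decisive feature is that $\nabla^2_\bu\eta^*\equiv I$. By Proposition \ref{prop:approx pbms}, $\bu^\varepsilon$ is a classical solution of \eqref{eq:regularised pb} on $\mathbb{R}^2_+$, smooth away from the initial time, taking values in the region $\{w_1,w_2\geq c_0\}$ on which $\bF$ and $\nabla_\bu\bF$ are smooth and bounded; moreover $\bu^\varepsilon\in L^\infty(\mathbb{R}^2_+)\cap L^2_{\loc}(0,\infty;H^1(\mathbb{R}))$ with $\bu^\varepsilon_t\in L^2_{\loc}(0,\infty;H^{-1}(\mathbb{R}))$, so in particular $\bu^\varepsilon\in C([0,\infty);L^2(\mathbb{R}))$ with $\bu^\varepsilon(0,\cdot)=\bu_0$.

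First I would test the equation \eqref{eq:regularised pb} against $(\bu^\varepsilon)^\intercal=\nabla_\bu\eta^*(\bu^\varepsilon)^\intercal$, i.e.\ pair the weak formulation with $\bu^\varepsilon\in L^2(0,T;H^1(\mathbb{R}))$ and use the Lions--Magenes identity $\langle\bu^\varepsilon_t,\bu^\varepsilon\rangle=\tfrac12\tfrac{\der}{\der t}\|\bu^\varepsilon\|_{L^2(\mathbb{R})}^2$. Three contributions arise: the time term gives $\tfrac12\tfrac{\der}{\der t}\|\bu^\varepsilon(t,\cdot)\|_{L^2(\mathbb{R})}^2$; the viscous term, tested this way, contributes $-\varepsilon\|\bu^\varepsilon_x(t,\cdot)\|_{L^2(\mathbb{R})}^2$ with no boundary term (this is where $\nabla^2_\bu\eta^*\equiv I$ ensures the favourable sign and the absence of lower-order terms); and the flux term, by the entropy relation $\nabla_\bu q^{*\intercal}=\nabla_\bu\eta^{*\intercal}\nabla_\bu\bF$ (equivalently a direct computation from \eqref{eq:what it needs to be}), equals $\int_\mathbb{R}\partial_x\big(q^*(\bu^\varepsilon)\big)\d x=\int_\mathbb{R}\partial_x\beta^\varepsilon\d x$, which vanishes since $\beta^\varepsilon(t,\cdot)\in H^1(\mathbb{R})\hookrightarrow C_0(\mathbb{R})$ for a.e.\ $t$. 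Combining these gives
\[
\tfrac12\tfrac{\der}{\der t}\|\bu^\varepsilon(t,\cdot)\|_{L^2(\mathbb{R})}^2+\varepsilon\|\bu^\varepsilon_x(t,\cdot)\|_{L^2(\mathbb{R})}^2=0\qquad (t>0).
\]

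Integrating this identity on $[\tau,t]$ and letting $\tau\to0^+$ (using $\bu^\varepsilon\in C([0,\infty);L^2(\mathbb{R}))$ and monotone convergence for the dissipation integral) yields the conservation identity $\|\bu^\varepsilon(t,\cdot)\|_{L^2(\mathbb{R})}^2+2\varepsilon\int_0^t\|\bu^\varepsilon_x(s,\cdot)\|_{L^2(\mathbb{R})}^2\d s=\|\bu_0\|_{L^2(\mathbb{R})}^2$ for all $t>0$, from which \eqref{eq:convex entropy estimate} follows with, say, $C(t):=2\|\bu_0\|_{L^2(\mathbb{R})}^2+t$ (finite, increasing, and independent of $\varepsilon$). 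The only genuinely delicate point is the justification of the pairings and the vanishing of the flux boundary term, which is handled by the smoothness and invariant-region information of Proposition \ref{prop:approx pbms}; note that no Grönwall step and hence no exponential $t$-growth is needed here, precisely because the quadratic entropy $\eta^*$ has constant Hessian, so the bound is in fact uniform in $t$.
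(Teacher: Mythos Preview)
Your proof is correct but follows a genuinely different route from the paper's. The paper introduces spatial cutoffs $\varphi^R=\mathds{1}_{[-R,R]}*\varrho_\delta$, tests the entropy identity \eqref{eq:exp form ent eqn with viscosity} against them, and bounds the resulting terms $\int q^*(\bu^\varepsilon)\varphi^R_x\,\d x$ and $\varepsilon\int\eta^*(\bu^\varepsilon)\varphi^R_{xx}\,\d x$ using only the uniform $L^\infty$ bound on $\bu^\varepsilon$; these contribute $CT(1+\varepsilon)$, after which one lets $R\to\infty$ by monotone convergence and obtains a bound growing linearly in $T$. You instead test directly with $\bu^\varepsilon$ and exploit the $H^1(\mathbb{R})$ spatial regularity of $\beta^\varepsilon(t,\cdot)$ supplied by Proposition~\ref{prop:approx pbms} to make the flux term $\int_\mathbb{R}\partial_x q^*(\bu^\varepsilon)\,\d x=\int_\mathbb{R}\partial_x\beta^\varepsilon\,\d x$ vanish \emph{exactly}, yielding a genuine conservation law and hence a bound independent of $t$. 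Your argument is shorter and gives a sharper constant; the paper's cutoff approach is more robust in that it relies only on the $L^\infty$ invariant-region bound rather than on the $H^1$ integrability coming from the fixed-point construction, which may matter if one later relaxes the $L^2$ hypothesis on the data.
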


\begin{proof}
We divide the proof into two steps. 

\smallskip 
\noindent 1. \textit{Special entropy pair}. Select $\eta^*$ to be the special entropy of \eqref{eq:special entropy}, \textit{i.e.}
\begin{equation*}
    \eta^*(\sigma,\beta) = \frac{1}{2}(\sigma^2 + \beta^2), 
\end{equation*}
which has gradient given by $\nabla_\bu \eta^* = (\sigma,\beta)=\bu$ and Hessian $\nabla^2_\bu \eta^* = \mathrm{Id}$; it is strongly convex. It follows from testing \eqref{eq:regularised pb} with $(\nabla_\bu \eta^*)^\intercal$ that there holds 
\begin{equation}\label{eq:exp form ent eqn with viscosity}
    \eta^*(\bu^\varepsilon)_t + q^*(\bu^\varepsilon)_x = \varepsilon\nabla_\bu \eta^* \cdot \bu^\varepsilon_{xx} = \varepsilon\eta^*(\bu^\varepsilon)_{xx} - \underbrace{\varepsilon(\bu^\varepsilon_x)^\intercal \nabla^2_\bu \eta^*(\bu^\varepsilon) \bu_x^\varepsilon}_{=\varepsilon |\bu^\varepsilon_x|^2}. 
\end{equation}
Testing this equation against non-negative $\varphi \in C^\infty_c(\mathbb{R})$ depending only on $x$ and integrating by parts, we obtain 
\begin{equation*}
    \frac{\der}{\der t}\int_\mathbb{R}\varphi \eta^*(\bu^\varepsilon) \d x + \int_\mathbb{R} \varphi |\sqrt{\varepsilon}\bu^\varepsilon_x|^2 \d x = \int_\mathbb{R}  q^*(\bu^\varepsilon) \varphi_x \d x + \varepsilon \int_\mathbb{R} \eta^*(\bu^\varepsilon) \varphi_{xx} \d x, 
\end{equation*}
\textit{i.e.}, using the uniform bound on $\{\bu^\varepsilon\}_\varepsilon$ in $L^\infty(\mathbb{R}^2_+)$ and the continuity of $\eta^*$ and $q^*$, we get 
\begin{equation}\label{eq:convex entropy estimate not quite}
    \frac{1}{2}\int_\mathbb{R} \varphi |\bu^\varepsilon(t,x)|^2 \d x + \int_0^T \! \! \int_\mathbb{R} \varphi |\sqrt{\varepsilon}\bu^\varepsilon_x|^2 \d x \leq \frac{1}{2}\int_\mathbb{R} \varphi |\bu_0|^2 \d x + C\int_0^T \! \! \int_\mathbb{R} \! \! \big( |\varphi_x| + \varepsilon |\varphi_{xx}| \big) \d x \d t. 
\end{equation}

\smallskip 
\noindent 2. \textit{Relaxation of test function}. For all $R>0$, we define $\varphi^R$ as follows 
\begin{equation*}
    \varphi^R  \defeq  \mathds{1}_{[-R,R]}*\varrho_{\delta}, 
\end{equation*}
where $\varrho$ is the standard non-negative Friedrichs bump function supported on $[-1,1]$ and having unit mass, and $\varrho_{\delta}(x) = \delta^{-1} \varrho(x/\delta)$, with $\delta$ chosen sufficiently small such that $\varphi^R \equiv 1$ on $[-R,R]$ and $\varphi^R \in C^\infty_c((-R-1,R+1))$. It follows that there exists a positive constant $C$ independent of $R$ such that $$|\varphi^R_x|+|\varphi^R_{xx}| \leq C\mathds{1}_{[-R-1,-R]\cup[R,R+1]},$$
whence, inserting $\varphi^R$ as the test function in \eqref{eq:convex entropy estimate not quite}, the final term on the right-hand side of \eqref{eq:convex entropy estimate not quite} is estimated by 
\begin{equation*}
    \int_0^T \int_\mathbb{R}  \big( |\varphi^R_x| + \varepsilon |\varphi^R_{xx}| \big) \d x \d t \leq CT(1+\varepsilon) 
\end{equation*}
for some positive constant $C$ independent of $\varepsilon$ and $T$. By returning to \eqref{eq:convex entropy estimate not quite} and letting $R\to \infty$, the Monotone Convergence Theorem implies 
\begin{equation*}
    \frac{1}{2}\int_\mathbb{R} |\bu^\varepsilon(t,x)|^2 \d x + \int_0^T \int_\mathbb{R} |\sqrt{\varepsilon}\bu^\varepsilon_x|^2 \d x \d t \leq \frac{1}{2}\int_\mathbb{R} |\bu_0|^2 \d x + CT(1+\varepsilon). 
\end{equation*}
We deduce that there exists a positive constant $C$ independent of $\varepsilon$ and $T$ such that 
\begin{equation*}
    \Vert \bu^\varepsilon\Vert^2_{L^\infty(0,T;L^2(\mathbb{R}))} + \Vert \sqrt{\varepsilon}\bu^\varepsilon_x \Vert^2_{L^2((0,T)\times\mathbb{R})} \leq C(1+T). 
\end{equation*}
By replicating this strategy on each time-interval $[nT,(n+1)T]$ for $n \in \mathbb{N}$, we obtain \eqref{eq:convex entropy estimate}. 
\end{proof}

We are now in a position to prove the compactness in $H^{-1}_\loc$ of the entropy dissipation measures. This is encapsulated in the following lemma. 

\begin{lemma}[$H^{-1}_\loc$ Compactness of Entropy Dissipation Measures]\label{lem:H-1 cpct}
    Let $(\eta,q)$ be a $C^2$ entropy pair of the system \eqref{eq:consvn law carroll gamma 3} and $\{\bu^\varepsilon\}_\varepsilon$ be the sequence of regularised solutions provided by Proposition \ref{prop:approx pbms}. Then $\big\{\eta(\bu^\varepsilon)_t + q(\bu^\varepsilon)_x \big\}_\varepsilon$ is pre-compact in $H^{-1}_\loc(\mathbb{R}^2_+)$, \textit{i.e.}~for all compact subsets $K \subset \mathbb{R}^2_+$ there exists a compact set $\kappa_K \subset H^{-1}(K)$ such that 
    \begin{equation*}
        \big\{\eta(\bu^\varepsilon)_t + q(\bu^\varepsilon)_x \big\}_\varepsilon \subset \kappa_K. 
    \end{equation*}
\end{lemma}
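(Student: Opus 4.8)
The plan is to run the classical vanishing-viscosity argument: combine the viscous entropy identity obtained by premultiplying the regularised equation \eqref{eq:regularised pb} by $\nabla_\bu\eta^\intercal$ with the dissipation estimate of Lemma \ref{lem:dissipation}, and then conclude with the Murat--Tartar compactness interpolation lemma. The starting point is that, by Proposition \ref{prop:approx pbms}, each $\bu^\varepsilon$ is a classical solution of \eqref{eq:regularised pb} away from $t=0$ whose range is contained, uniformly in $\varepsilon$, in the fixed compact set
\begin{equation*}
\mathcal{K}_{c_0} \defeq \big\{ \bu \in \mathbb{R}^2 : \, w_1(\bu) \geq c_0, \ w_2(\bu) \geq c_0, \ |\bu| \leq C \big\};
\end{equation*}
since $w_1, w_2 \geq c_0$ forces $\sigma \geq c_0 > 0$ and $-\sigma < \beta < \sigma$, the set $\mathcal{K}_{c_0}$ is a compact subset of $\mathcal{H}$ bounded away from the degeneracy lines $\{\beta = \pm\sigma\}$, so that $\eta$, $\nabla_\bu\eta$, $\nabla^2_\bu\eta$ and $q$ are all uniformly bounded along the $\bu^\varepsilon$.

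The first computation I would carry out is the viscous entropy identity. Premultiplying \eqref{eq:regularised pb} by $\nabla_\bu\eta(\bu^\varepsilon)^\intercal$ and using $\nabla_\bu q^\intercal = \nabla_\bu\eta^\intercal\nabla_\bu\bF$, exactly as in the derivation of \eqref{eq:exp form ent eqn with viscosity}, one finds
\begin{equation*}
\eta(\bu^\varepsilon)_t + q(\bu^\varepsilon)_x = \partial_x\!\big(\varepsilon\,\partial_x\eta(\bu^\varepsilon)\big) - \varepsilon\,(\bu^\varepsilon_x)^\intercal \nabla^2_\bu\eta(\bu^\varepsilon)\,\bu^\varepsilon_x =: A_\varepsilon + B_\varepsilon.
\end{equation*}
For $A_\varepsilon$ I would write $\varepsilon\,\partial_x\eta(\bu^\varepsilon) = \sqrt{\varepsilon}\big(\sqrt{\varepsilon}\,\nabla_\bu\eta(\bu^\varepsilon)\cdot\bu^\varepsilon_x\big)$; since $|\nabla_\bu\eta(\bu^\varepsilon)| \leq C$ on $\mathcal{K}_{c_0}$ and $\|\sqrt{\varepsilon}\,\bu^\varepsilon_x\|_{L^2((0,t)\times\mathbb{R})}$ is bounded for each $t$ by Lemma \ref{lem:dissipation}, the bracket is bounded in $L^2_\loc(\mathbb{R}^2_+)$, hence $\varepsilon\,\partial_x\eta(\bu^\varepsilon) \to 0$ strongly in $L^2_\loc(\mathbb{R}^2_+)$ and therefore $A_\varepsilon \to 0$ strongly in $H^{-1}_\loc(\mathbb{R}^2_+)$; in particular $\{A_\varepsilon\}_\varepsilon$ is pre-compact there. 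For $B_\varepsilon$ I would use $|B_\varepsilon| \leq C\varepsilon|\bu^\varepsilon_x|^2$ and again Lemma \ref{lem:dissipation} to conclude that $\{B_\varepsilon\}_\varepsilon$ is bounded in $L^1_\loc(\mathbb{R}^2_+)$, hence in the space of local Radon measures.

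To finish, I would observe that because $\|\bu^\varepsilon\|_{L^\infty(\mathbb{R}^2_+)} \leq C$ and $\eta, q$ are continuous on $\mathcal{K}_{c_0}$, the families $\{\eta(\bu^\varepsilon)\}_\varepsilon$ and $\{q(\bu^\varepsilon)\}_\varepsilon$ are bounded in $L^\infty(\mathbb{R}^2_+)$, so that $\{\eta(\bu^\varepsilon)_t + q(\bu^\varepsilon)_x\}_\varepsilon$ is bounded in $W^{-1,r}_\loc(\mathbb{R}^2_+)$ for every $r \in (1,\infty)$, in particular for some $r > 2$. Then the Murat--Tartar compactness interpolation lemma (\textit{cf.}~\cite{Tartar79}, \cite[\S 2]{Serre2}) --- a sequence bounded in $W^{-1,r}_\loc$ for some $r>2$ that splits as the sum of a sequence pre-compact in $H^{-1}_\loc$ and a sequence bounded in the local Radon measures is itself pre-compact in $H^{-1}_\loc$ --- applied to the decomposition $\eta(\bu^\varepsilon)_t + q(\bu^\varepsilon)_x = A_\varepsilon + B_\varepsilon$ yields the claim. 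The only subtlety, and the sole ingredient that is not pure bookkeeping, is that the entropy pairs generated by Lemma \ref{lem:ent pairs} are merely $C^2$ on $\{\beta \neq \pm\sigma\}$, so that their derivatives could \emph{a priori} blow up near the degeneracy lines; this is exactly why the uniform invariant region of Proposition \ref{prop:approx pbms}, which confines every $\bu^\varepsilon$ to the fixed compact set $\mathcal{K}_{c_0} \subset \mathcal{H}$ independently of $\varepsilon$, is indispensable here.
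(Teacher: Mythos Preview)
Your proposal is correct and follows essentially the same route as the paper: the same viscous entropy identity, the same splitting into a diffusive term $A_\varepsilon$ vanishing in $H^{-1}_\loc$ via the dissipation estimate and a Hessian term $B_\varepsilon$ bounded in $L^1_\loc$, then the same appeal to a Murat-type interpolation lemma using the uniform $L^\infty$ bound. The only cosmetic difference is that the paper phrases the final step via the Ding--Chen--Luo interpolation lemma (bounding $B_\varepsilon$ in $W^{-1,r}$ for $r\in(1,2)$ by Rellich, then interpolating with the $W^{-1,\infty}$ bound), whereas you invoke the equivalent Murat--Tartar formulation directly; your closing remark on why the invariant region of Proposition~\ref{prop:approx pbms} is indispensable is a welcome clarification that the paper leaves implicit.
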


\begin{proof}
    Recall from the computation \eqref{eq:exp form ent eqn with viscosity} that, for a general $C^2$ entropy pair, there holds 
    \begin{equation}\label{eq:general ent pair dissipation}
    \eta(\bu^\varepsilon)_t + q(\bu^\varepsilon)_x = \varepsilon\eta(\bu^\varepsilon)_{xx} - \varepsilon(\bu^\varepsilon_x)^\intercal \nabla^2_\bu \eta(\bu^\varepsilon) \bu_x^\varepsilon. 
\end{equation}
We divide the proof into three simple steps. 

\smallskip 
\noindent 1. The dissipation estimate of Lemma \ref{lem:dissipation} implies the estimate
\begin{equation}\label{eq:first term cpctness}
    \Vert \varepsilon \eta(\bu^\varepsilon)_{xx} \Vert_{H^{-1}(K)} \leq C_\eta \sqrt{\varepsilon} \Vert \sqrt{\varepsilon}\bu^\varepsilon_{x} \Vert_{L^2(K)} \leq C_{\eta,K} \sqrt{\varepsilon}. 
\end{equation}

\smallskip 
\noindent 2. Meanwhile, using the min-max theorem for the Rayleigh quotient of a symmetric matrix, we have
    \begin{equation}\label{eq:rayleigh quotient}
        |\varepsilon(\bu^\varepsilon_x)^\intercal \nabla^2_\bu \eta(\bu^\varepsilon) \bu_x^\varepsilon| \leq C_\eta |\sqrt{\varepsilon} \bu^\varepsilon_x|^2, 
    \end{equation}
   where we used that $\eta$ is $C^2$ and $\{\bu^\varepsilon\}_\varepsilon$ is uniformly bounded in $L^\infty$. The dissipation estimate of Lemma \ref{lem:dissipation} therefore implies 
    \begin{equation}\label{eq:second term cpctness}
        \Vert \varepsilon(\bu^\varepsilon_x)^\intercal \nabla^2_\bu \eta(\bu^\varepsilon) \bu_x^\varepsilon \Vert_{L^1(K)} \leq C_{\eta,K}, 
    \end{equation}
  and it follows that $\{\varepsilon(\bu^\varepsilon_x)^\intercal \nabla^2_\bu \eta(\bu^\varepsilon) \bu_x^\varepsilon \}_\varepsilon$ is pre-compact in $W^{-1,r}(K)$ for all $r \in (1,2)$ by the Rellich Theorem. 

 \smallskip 
 
 \noindent 3. Furthermore, using the uniform $L^\infty$ estimate on $\{\bu^\varepsilon\}_\varepsilon$, there holds 
    \begin{equation*}
      \Vert  \eta(\bu^\varepsilon)_t + q(\bu^\varepsilon)_x \Vert_{W^{-1,\infty}(K)} \leq C_{\eta,K}. 
    \end{equation*}
    Using the interpolation compactness lemma of Ding, Chen, and Luo \cite[Chapter 4]{dingchenluo1} (\textit{cf.} Murat's Lemma \cite{muratcone}), the estimates \eqref{eq:first term cpctness}--\eqref{eq:second term cpctness} now imply that the sequence $\{ \eta(\bu^\varepsilon)_t + q(\bu^\varepsilon)_x \}_\varepsilon$ is contained in $\kappa$ a compact subset of $H^{-1}(K)$. 
\end{proof}

In fact, we can give a more precise expression for the entropy dissipation $-\varepsilon(\bu^\varepsilon_x)^\intercal \nabla^2_\bu \eta(\bu^\varepsilon) \bu_x^\varepsilon$ when considering entropies generated by the kernels $(\chi_j,\varsigma_j)$ ($j=1,2$) as per Lemma \ref{lem:ent pairs}. We compute explicitly, with the notation $\eta_j$ ($j=1,2$) as in the statement of Lemma \ref{lem:ent pairs}: 
\begin{equation}\label{eq:first entropy dissipation explicit}
    -\varepsilon(\bu^\varepsilon_x)^\intercal \nabla^2_\bu \eta_1(\bu^\varepsilon) \bu_x^\varepsilon = -\varepsilon\Big( f''(\beta^\varepsilon-\sigma^\varepsilon) (\sigma^\varepsilon_x - \beta^\varepsilon_x)^2 + f''(\beta^\varepsilon + \sigma^\varepsilon) (\sigma^\varepsilon_x + \beta^\varepsilon_x)^2  \Big), 
\end{equation}
and 
\begin{equation}\label{eq:second entropy dissipation explicit}
    -\varepsilon(\bu^\varepsilon_x)^\intercal \nabla^2_\bu \eta_2(\bu^\varepsilon) \bu_x^\varepsilon = -\varepsilon\Big( -g'(\beta^\varepsilon-\sigma^\varepsilon)(\sigma^\varepsilon_x - \beta^\varepsilon_x)^2 + g'(\beta^\varepsilon+\sigma^\varepsilon)(\sigma^\varepsilon_x + \beta^\varepsilon_x)^2  \Big), 
\end{equation}
whence \eqref{eq:rayleigh quotient} follows immediately from the uniform $L^\infty$ bounds for $\{\bu^\varepsilon\}_\varepsilon$ and the regularity of $f$ and $g$, as claimed. The formulas \eqref{eq:first entropy dissipation explicit} and \eqref{eq:second entropy dissipation explicit} will be used in the proof of the kinetic formulation (Theorem \ref{thm:main_iii}).

\section{Proofs of the Main Theorems}\label{sec:proofs of global exis and weak conti}

This section is devoted to the proofs of the existence of a global entropy solution (Theorem \ref{thm:main_i}), the weak continuity of the system (Theorem \ref{thm:main_ii}), and the kinetic formulation (Theorem \ref{thm:main_iii}). 

\subsection{Global Existence}\label{sec:global exist}

\begin{proof}[Proof of Theorem \ref{thm:main_i}]

The uniform estimates provided by Proposition \ref{prop:approx pbms} as well as the $H^{-1}_\loc$ compactness result of Lemma \ref{lem:H-1 cpct} imply that the sequence $\{\bu^\varepsilon\}_\varepsilon$ of solutions to the regularised problems of Proposition \ref{prop:approx pbms} satisfy the assumptions of the compensated compactness framework established in Proposition \ref{lem:comp comp}. As such, there exists a subsequence, which we do not relabel, as well as $\bu = (\sigma,\beta) \in L^\infty(\mathbb{R}^2_+)$, such that 
\begin{equation}\label{eq:convergence for global existence proof}
    \bu^\varepsilon \to \bu \quad \text{a.e.~and strongly in $L^p_{\loc}(\mathbb{R}^2_+)$ for all } p \in [1,\infty); 
\end{equation}
additionally, the dissipation estimate of Lemma \ref{lem:dissipation} implies that $ \bu \in L^\infty_{\loc}(0,\infty;L^2(\mathbb{R}))$. Moreover, there holds, for any $\varphi \in C^\infty_c([0,\infty)\times\mathbb{R})$, 
\begin{equation*}
    \int_{\mathbb{R}^2_+} \big( \bu^\varepsilon \varphi_t + \bF(\bu^\varepsilon) \varphi_x \big) \d x \d t = -\int_\mathbb{R} \bu_0(x) \varphi(0,x) \d x + \varepsilon \int_{\mathbb{R}^2_+} \bu^\varepsilon_x \varphi_x \d x \d t. 
\end{equation*}
We are now ready to conclude that the limit $\bu$ is a solution of \eqref{eq:consvn law carroll gamma 3} in the following senses.

\smallskip 
\noindent 1. \textit{Distributional solution}. Let $\varphi \in C^\infty_c([0,\infty)\times\mathbb{R})$ be a test function. The continuity of $\bF$ in $\mathbb{R}^2 \setminus (\{\sigma=0\}\cup \{\beta = \pm \sigma\})$ and the invariant regions provided by Proposition \ref{prop:approx pbms}, as well as the convergence \eqref{eq:convergence for global existence proof} and the Dominated Convergence Theorem, imply that 
\begin{equation*}
   \lim_{\varepsilon \to 0} \int_{\mathbb{R}^2_+} \big( \bu^\varepsilon \varphi_t + \bF(\bu^\varepsilon) \varphi_x \big) \d x \d t = \int_{\mathbb{R}^2_+} \big( \bu \varphi_t + \bF(\bu) \varphi_x \big) \d x \d t, 
\end{equation*}
while the dissipation estimate of Lemma \ref{lem:dissipation} implies that
\begin{equation*}
    \bigg| \varepsilon \int_{\mathbb{R}^2_+} \bu^\varepsilon_x \varphi_x \d x \d t \bigg| \leq C_{\varphi} \sqrt{\varepsilon} \to 0. 
\end{equation*}
It therefore follows that $\bu=(\sigma,\beta)$ is a distributional solution of \eqref{eq:consvn law carroll gamma 3} for $\gamma=3$.

\smallskip 
\noindent 2. \textit{Entropy solution}. Let $(\eta,q)$ be any $C^2$ convex entropy pair. It follows from the dissipation relation \eqref{eq:general ent pair dissipation} and the convexity of $\eta$ that, given any non-negative test function $\varphi \in C^\infty_c(\mathbb{R}^2_+)$, there holds 
\begin{equation*}
    \int_{\mathbb{R}^2_+} \big( \eta(\bu) \varphi_t + q(\bu) \varphi_x \big) \d x \d t = \lim_{\varepsilon \to 0} \int_{\mathbb{R}^2_+} \big( \eta(\bu^\varepsilon) \varphi_t + q(\bu^\varepsilon) \varphi_x \big) \d x \d t \geq -\lim_{\varepsilon\to 0}\varepsilon \int_{\mathbb{R}^2_+} \eta(\bu^\varepsilon)_x \varphi_x \d x \d t, 
\end{equation*}
and the final term on the right-hand side is estimated using the dissipation estimate of Lemma \ref{lem:dissipation} as 
\begin{equation*}
    \bigg|\varepsilon \int_{\mathbb{R}^2_+} \eta(\bu^\varepsilon)_x \varphi_x \d x \d t \bigg| \leq C_{\eta,\varphi}\sqrt{\varepsilon} \to 0. 
\end{equation*}
It therefore follows that there holds, in the sense of distributions, 
\begin{equation*}
    \eta(\bu)_t + q(\bu)_x \leq 0. 
\end{equation*}
Both requirements of Definition \ref{def:ent sol} are fulfilled, which concludes the proof. 
\end{proof}

\subsection{Weak Continuity}\label{sec:weak continuity}

\begin{proof}[Proof of Theorem \ref{thm:main_ii}]
    Let $(\eta,q)$ be any $C^2$ entropy pair and define $\bu^n \defeq (\sigma^n,\beta^n)$ and 
    \begin{equation*}
        D^n \defeq \eta(\bu^n)_t + q(\bu^n)_x. 
    \end{equation*}
    We must show that $\{D^n\}_n$ is pre-compact in $H^{-1}_\loc(\mathbb{R}^2_+)$; once this is proved, we may directly apply Proposition \ref{lem:comp comp} to deduce that $\bu^n \to \bu$ a.e.~to some $\bu \in L^\infty(\mathbb{R}^2_+)$. The same strategy as that used in the Proof of Theorem \ref{thm:main_i} is then employed to show that $\bu$ is also an entropy solution of \eqref{eq:consvn law carroll gamma 3} for $\gamma=3$. We proceed in two steps.

    \smallskip 
    \noindent 1. Define $$ d^n \defeq \eta^*(\bu^n)_t + q^*(\bu^n)_x, $$
    where we recall that $(\eta^*,q^*)$ is the special entropy pair given by \eqref{eq:special entropy}. As this entropy pair is $C^2$, it follows directly from the definition of entropy solution that there holds 
    $$d^n \leq 0$$
    in the sense of distributions. Furthermore, since the sequence $\{\bu^n\}_n$ is uniformly bounded in $L^\infty$, we deduce that $\{d^n\}_n$ is pre-compact in $W^{-1,\infty}_\loc(\mathbb{R}^2_+)$. An application of Murat's Lemma \cite{muratcone} then implies that $$\{d^n\}_n \text{ is pre-compact in } W^{-1,r}_\loc(\mathbb{R}^2_+)$$ for all $r \in (1,\infty)$. The same argument implies that, given any $C^2$ \emph{convex} entropy pair $(\eta_c,q_c)$, the sequence $\{E^n\}_n$ determined by 
        \begin{equation*}
        E^n = \eta_c(\bu^n)_t + q_c(\bu^n)_x 
    \end{equation*}
    is pre-compact in $W^{-1,r}_\loc(\mathbb{R}^2_+)$ for all $r \in (1,\infty)$. 

    \smallskip 
    \noindent 2. Consider the sequence of entropy dissipation measures given by 
    \begin{equation*}
        F^n \defeq D^n + C_0 d^n 
    \end{equation*}
    for some positive constant $C_0$ to be determined. Note that, by the linearity of the entropy equation \eqref{eq:ent eqn}, the pair of functions determined by 
    \begin{equation*}
        \eta_c \defeq \eta + C_0 \eta^*, \quad q_c \defeq q + C_0 q^* 
    \end{equation*}
    is also an entropy pair of the system. Furthermore, since $\eta,q$ are $C^2$ and $\{\bu^n\}_n$ is uniformly bounded (\textit{i.e.}~there exists a bounded $B \subset \mathbb{R}^2$ such that $\bu^n(t,x) \in B$ for a.e.~$(t,x) \in \mathbb{R}^2_+$ for all $n$), there exists a positive constant $C_0$ such that 
    \begin{equation*}
        C_0 \, \mathrm{Id} + \nabla^2\eta \text{ is positive definite on } B. 
    \end{equation*}
As such, the entropy pair $(\eta_c,q_c)$ is convex on the range of all of the $\bu^n$, and hence the conclusion of Step 1 implies that 
\begin{equation*}
    \eta_c(\bu^n)_t + q_c(\bu^n)_x = F^n \text{ is pre-compact in } W^{-1,r}_\loc(\mathbb{R}^2_+) 
\end{equation*}
for all $r \in (1,\infty)$. Meanwhile, we already know from Step 1 that $\{d^n\}_n$ is pre-compact in $W^{-1,r}_\loc(\mathbb{R}^2_+)$, whence the linearity $D^n = F^n - C_0 d^n$ implies, in particular, that $$\{D^n\}_n \text{ is pre-compact in } H^{-1}_\loc(\mathbb{R}^2_+).$$
As per the proof of Theorem \ref{thm:main_i}, we deduce that $\bu^n \to \bu$ a.e.~and strongly in $L^p_\loc(\mathbb{R}^2_+)$ for all $p \in [1,\infty)$ and that $\bu$ is an entropy solution, which completes the proof.
\end{proof}

\subsection{Kinetic Formulation}\label{sec:proof of kinetic}

\begin{proof}[Proof of Theorem \ref{thm:main_iii}]
We follow the strategy of, \textit{e.g.}, \cite[\S 6]{Matthew}. By rewriting \eqref{eq:general ent pair dissipation} in terms of the kernels $(\chi_j,\varsigma_j)$ ($j=1,2$) and using the formulas \eqref{eq:first entropy dissipation explicit}--\eqref{eq:second entropy dissipation explicit}, we have for all $(f,g) \in \mathcal{A}\times\mathcal{B}$ 
    \begin{equation}\label{eq:dissipations in terms ofkernel 1}
       \begin{aligned} \partial_t\langle \chi_1(\sigma^\varepsilon(t,x),\beta^\varepsilon(t,x),\cdot) , f \rangle + \partial_x \langle \varsigma_1(\sigma^\varepsilon(t,x),\beta^\varepsilon(t,x),\cdot) , f \rangle = & \, \varepsilon \partial_{xx}\langle \chi_1(\sigma^\varepsilon(t,x),\beta^\varepsilon(t,x),\cdot),f \rangle \\ 
       &- \langle \partial^2_s\mu_1^\varepsilon(t,x,\cdot) , f \rangle 
       \end{aligned}
    \end{equation}
    and 
        \begin{equation}\label{eq:dissipations in terms ofkernel 2}
       \begin{aligned} \partial_t\langle \chi_2(\sigma^\varepsilon(t,x),\beta^\varepsilon(t,x),\cdot) , g \rangle + \partial_x \langle \varsigma_2(\sigma^\varepsilon(t,x),\beta^\varepsilon(t,x),\cdot) , g \rangle = \, &\varepsilon \partial_{xx}\langle \chi_2(\sigma^\varepsilon(t,x),\beta^\varepsilon(t,x),\cdot),g \rangle \\ 
       &- \langle \partial_s\mu_2^\varepsilon(t,x,\cdot) , g \rangle, 
       \end{aligned}
    \end{equation}
    where we have defined 
    \begin{equation}\label{eq:dissipations defis}
        \begin{aligned}
            &\mu_1^\varepsilon(t,x) \defeq \varepsilon \Big( \delta(\beta^\varepsilon-\sigma^\varepsilon-s) (\sigma^\varepsilon_x - \beta^\varepsilon_x)^2 + \delta(\beta^\varepsilon+\sigma^\varepsilon-s) (\sigma^\varepsilon_x + \beta^\varepsilon_x)^2 \Big), \\ 
            &\mu_2^\varepsilon(t,x) \defeq \varepsilon \Big(\delta(\beta^\varepsilon-\sigma^\varepsilon-s) (\sigma^\varepsilon_x - \beta^\varepsilon_x)^2 -\delta(\beta^\varepsilon+\sigma^\varepsilon-s) (\sigma^\varepsilon_x + \beta^\varepsilon_x)^2 \Big). 
        \end{aligned}
    \end{equation}
    We note in passing that a standard argument involving Riemann sums shows that we may interchange the derivatives with the duality brackets in equations \eqref{eq:dissipations in terms ofkernel 1}--\eqref{eq:dissipations in terms ofkernel 2}. By testing against any $\psi \in C^2_c(\mathbb{R}^2_+)$, we get 
    \begin{equation}\label{eq:before limit kinetic 1}
    \begin{aligned}
        -\int_{\mathbb{R}^2_+} \Big\langle \Big( \psi_t \chi_1(\sigma^\varepsilon(t,x),\beta^\varepsilon(t,x),\cdot) + \psi_x \varsigma_1&(\sigma^\varepsilon(t,x),\beta^\varepsilon(t,x),\cdot) , f \Big\rangle \d x \d t \\ 
        =& \, \varepsilon \int_{\mathbb{R}^2_+} \psi_{xx}\langle \chi_1(\sigma^\varepsilon(t,x),\beta^\varepsilon(t,x),\cdot) , f \rangle \d x \d t \\ 
        &- \int_{\mathbb{R}^2_+} \psi \langle \partial^2_s\mu_1^\varepsilon(t,x,\cdot), f \rangle \d x \d t, 
        \end{aligned}
    \end{equation}
    and 
        \begin{equation}\label{eq:before limit kinetic 2}
    \begin{aligned}
        -\int_{\mathbb{R}^2_+} \Big\langle \Big( \psi_t \chi_2(\sigma^\varepsilon(t,x),\beta^\varepsilon(t,x),\cdot) + \psi_x \varsigma_2&(\sigma^\varepsilon(t,x),\beta^\varepsilon(t,x),\cdot) , f \Big\rangle \d x \d t \\ 
        =& \, \varepsilon \int_{\mathbb{R}^2_+} \psi_{xx} \langle \chi_2(\sigma^\varepsilon(t,x),\beta^\varepsilon(t,x),\cdot) , g \rangle \d x \d t \\ 
        &- \int_{\mathbb{R}^2_+} \psi \langle \partial_s\mu_2^\varepsilon(t,x,\cdot), g \rangle \d x \d t, 
        \end{aligned}
    \end{equation}
    Observe that, for $j=1,2$, with $\phi \in C^0(\mathbb{R})$, using the explicit formulas for $\eta_1$ and $\eta_2$ in Lemma \ref{lem:ent pairs} as well as the uniform bounds $\sup_\varepsilon \Vert \bu^\varepsilon \Vert_{L^\infty(\mathbb{R}^2_+)} < \infty$, we have 
    \begin{equation*}
        \big| \langle \chi_j(\sigma^\varepsilon(t,x),\beta^\varepsilon(t,x),\cdot) , \phi \rangle \big| = |\eta_j^\psi(\bu^\varepsilon)| \leq C_\phi 
    \end{equation*}
    for some positive constant $C_\phi$ independent of $\varepsilon$. It therefore follows that, for any $\varphi_1 \in \mathcal{A}$ and $\varphi_2 \in \mathcal{B}$, there holds ($j=1,2$) 
    \begin{equation*}
        \bigg| \int_{\mathbb{R}^2_+} \psi_{xx} \langle \chi_j(\sigma^\varepsilon(t,x),\beta^\varepsilon(t,x),\cdot) , \varphi_j \rangle \d x \d t \bigg| \leq C_{\varphi_j} \varepsilon \Vert \psi_{xx} \Vert_{L^1(\mathbb{R}^2_+)} \to 0 \quad \text{as } \varepsilon \to 0. 
    \end{equation*}
    Meanwhile, the strong convergence $\bu^\varepsilon \to \bu$ pointwise almost everywhere implies that, for any $\varphi_1 \in \mathcal{A}$ and $\varphi_2 \in \mathcal{B}$, there holds ($j=1,2$) 
    \begin{equation*}
    \begin{aligned}
        \lim_{\varepsilon\to0}\int_{\mathbb{R}^2_+} \Big\langle \Big( \psi_t \chi_j&(\sigma^\varepsilon(t,x),\beta^\varepsilon(t,x),\cdot) + \psi_x \varsigma_j(\sigma^\varepsilon(t,x),\beta^\varepsilon(t,x),\cdot) , \varphi_j \Big\rangle \d x \d t \\ 
        &= \int_{\mathbb{R}^2_+} \Big\langle \Big( \psi_t \chi_j(\sigma(t,x),\beta(t,x),\cdot) + \psi_x \varsigma_j(\sigma(t,x),\beta(t,x),\cdot) , \varphi_j \Big\rangle \d x \d t. 
    \end{aligned}
    \end{equation*}
   Also, we see directly from the formulas \eqref{eq:dissipations defis} that the distributions $\mu^\varepsilon_j$ ($j=1,2$) are such that, given any $\phi \in C^0_c(\mathbb{R}^2_+\times\mathbb{R})$, 
\begin{equation}\label{eq:est for bounded radon}
    \bigg|\int_{\mathbb{R}^2_+} \langle \mu^\varepsilon_j(t,x,\cdot),\phi \rangle \d x \d t \bigg| \leq 2\Vert \phi \Vert_{L^\infty(\supp \phi)} \int_{\supp \phi} |\sqrt{\varepsilon}\bu^\varepsilon_x|^2 \d x \d t \leq C, 
\end{equation}
   for some positive constant $C$ independent of $\varepsilon$, where we used the uniform dissipation estimate of \Cref{lem:dissipation} to obtain the final inequality. It follows that the sequences $\{\mu^\varepsilon_j\}_\varepsilon$ ($j=1,2$) are uniformly bounded in the dual of $C^0_c(\mathbb{R}^2_+\times\mathbb{R})$. We deduce from Alaoglu's Theorem and the Markov--Kakutani Theorem that there exists $\mu_1$ and $\mu_2$ bounded Radon measures on $\mathbb{R}^2_+\times\mathbb{R}$ such that 
   \begin{equation*}
       \mu^\varepsilon_j \overset{*}{\rightharpoonup} \mu_j \text{ in } \big( C^0_c(\mathbb{R}^2_+\times\mathbb{R}) \big)'  \qquad (j=1,2); 
   \end{equation*}
  we note in passing that an analogous estimate to \eqref{eq:est for bounded radon} shows that this convergence also occurs in the sense of duality with $C^0_c(\mathbb{R}^2_+)\times\mathcal{A}$ for $\mu_1$, and in duality with $C^0_c(\mathbb{R}^2_+)\times\mathcal{B}$ for $\mu_2$. We may therefore pass to the limit in the right-hand sides of \eqref{eq:before limit kinetic 1}--\eqref{eq:before limit kinetic 2} and thereby obtain \eqref{eq:kinetic formulation 1}--\eqref{eq:kinetic formulation 2} as claimed. Finally, it is clear from the expression in \eqref{eq:dissipations defis} that $\mu_1$ is a non-negative measure, which completes the proof.
\end{proof}

\section*{Acknowledgements}

The present work was triggered during discussions at the \emph{Journ\'ees Relativistes de Tours}, organised by X. Bekaert, Y. Herfray, S. Solodukhin and M. Volkov, held at the Institut Denis Poisson in June 2023. The authors thank the Faculty of Mathematics at the University of Cambridge for hospitality and financial support in February 2024. We thank Nikolaos Athanasiou for useful discussions. Simon Schulz also acknowledges the support of Centro di Ricerca Matematica Ennio De Giorgi.

\printbibliography

\end{document}